\numberwithin{equation}{section}
\newtheorem{theorem}[equation]{Theorem}
\newtheorem*{theorem*}{Theorem}
\newtheorem*{conjecture*}{Mamma Conjecture}
\newtheorem*{conjecture1*}{Mamma Conjecture (revisited)}
\newtheorem{proposition}[equation]{Proposition}
\newtheorem*{corollary*}{Corollary}
\theoremstyle{remark}
\newtheorem{definition}[equation]{Definition}
\newtheorem{example}[equation]{Example}
\theoremstyle{remark}
\newtheorem{remark}[equation]{Remark}
\newcommand{\cA}{{\mathcal A}}
\newcommand{\cC}{{\mathcal C}}
\newcommand{\cD}{{\mathcal D}}
\newcommand{\cH}{{\mathcal H}}
\newcommand{\cS}{{\mathcal S}}
\newcommand{\cT}{{\mathcal T}}
\newcommand{\bbA}{\mathbb{A}}
\newcommand{\bbF}{\mathbb{F}}
\newcommand{\bbR}{\mathbb{R}}
\newcommand{\bbS}{\mathbb{S}}
\newcommand{\bbQ}{\mathbb{Q}}
\newcommand{\bbZ}{\mathbb{Z}}
\DeclareMathOperator{\Id}{Id}
\DeclareMathOperator{\id}{id}
\newcommand{\Hom}{\mathrm{Hom}}
\newcommand{\op}{\mathrm{op}}
\newcommand{\too}{\longrightarrow}
\newcommand{\ie}{\textsl{i.e.}\ }
\newcommand{\eg}{\textsl{e.g.}}
\let\oldmarginpar\marginpar
\def\marginpar#1{\oldmarginpar{\tiny #1}}
\begin{document}

\title[Picard groups, weight structures, and (NC) mixed motives]{Picard groups, weight structures, and (noncommutative) mixed motives}
\author{Mikhail Bondarko and Gon{\c c}alo~Tabuada}

\address{Mikhail Bondarko, Russia, 198504, St. Petersburg, Universitetsky pr. 28, St. Petersburg State University, the Faculty of Higher Algebra and Number Theory }
\email{m.bondarko@spbu.ru}

\address{Gon{\c c}alo Tabuada, Department of Mathematics, MIT, Cambridge, MA 02139, USA}
\email{tabuada@math.mit.edu}
\urladdr{http://math.mit.edu/~tabuada}
\thanks{M.~Bondarko was supported by RFBR
(grants no.~14-01-00393-a and 15-01-03034-a), by  Dmitry Zimin's Foundation ``Dynasty", and by the Scientific schools grant no.~3856.2014.1. G.~Tabuada was partially supported by a NSF CAREER Award}


\subjclass[2000]{14A22, 14C15, 14F42, 18E30, 55P43}
\date{\today}

\keywords{Picard group, weight structure, mixed motives, motivic spectra, noncommutative mixed motives, symmetric ring spectra, noncommutative algebraic geometry}

\abstract{We develop a general theory which enables the computation of the Picard group of a symmetric monoidal triangulated category, equipped with a weight structure, in terms of the Picard group of the associated heart. As an application, we compute the Picard group of several categories of motivic nature  -- mixed Artin motives, mixed Artin-Tate motives, motivic spectra, noncommutative mixed Artin motives, noncommutative mixed motives of central simple algebras, noncommutative mixed motives of separable algebras -- as well as the Picard group of the derived categories of symmetric ring spectra.}
}

\maketitle
\vskip-\baselineskip
\vskip-\baselineskip

\section{Introduction and statement of results}\label{sec:intro}
The computation of the Picard group $\mathrm{Pic}(\cT)$ of a symmetric monoidal (triangulated) category $\cT$ is, in general, a very difficult task. The goal of this article is to explain how the theory of weight structures allows us to greatly simplify this task.

Let $(\cT, \otimes, {\bf 1})$ be a symmetric monoidal triangulated category equipped with a weight structure $w=(\cT^{w\geq 0}, \cT^{w\leq 0})$; consult \S\ref{sec:weight} for details. Assume that the symmetric monoidal structure $-\otimes-$ (as well as the $\otimes$-unit ${\bf 1}$) restricts to the heart $\cH:=\cT^{w \geq 0} \cap \cT^{w \leq 0}$ of the weight structure. We say that the category $\cT$ has the {\em $w$-Picard property} if the group homomorphism $\mathrm{Pic}(\cH) \times \bbZ \to \mathrm{Pic}(\cT), (a, n) \mapsto a[n]$,~is invertible. Our first main result provides sufficient conditions for this property~to~hold:
\begin{theorem}\label{thm:main1}
Assume that the weight structure $w$ on $\cT$ is {\em bounded}, \ie $\cT=\cup_{n \in \bbZ} \cT^{w\geq 0}[-n]=\cup_{n \in \bbZ} \cT^{w \leq 0}[-n]$, and that the Karoubization of the heart $\cH$ is semi-simple and {\em local} in the sense that if $a\otimes b=0$ then $a=0$ or $b=0$. Under these assumptions, the category $\cT$ has the $w$-Picard property.
\end{theorem}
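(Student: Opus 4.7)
\emph{The plan.} I would prove injectivity by direct inspection of weights, and surjectivity by a decompose-then-contract argument in $\kar(\cT)$: decompose any invertible object into pure pieces via semi-simplicity, then reduce to a single shift via locality, and finally descend from $\kar(\cH)$ back to $\cH$.

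\emph{Injectivity.} If $a[n]\cong b[m]$ in $\cT$ with $a,b\in\Pic(\cH)$, both sides are pure of the same weight; the $\Hom$-orthogonality axiom of the weight structure (together with boundedness) forces $n=m$, and the resulting isomorphism, being in the full subcategory $\cH$, yields $a\cong b$.

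\emph{Decomposition and contraction.} Let $X\in\Pic(\cT)$. The first key step is to show that under our hypotheses every object of $\kar(\cT)$ is isomorphic to $\bigoplus_{k\in\bbZ}M_k[-k]$ for some $M_k\in\kar(\cH)$ with almost all zero. For bounded $w$, this should follow from semi-simplicity of $\kar(\cH)$: the weight-complex functor $t\colon\kar(\cT)\to K^b(\kar(\cH))$ becomes an equivalence, and in $K^b$ over a semi-simple category every object is homotopy equivalent to its cohomology. Applying this to $X$ and its inverse, write $X\cong\bigoplus_kM_k[-k]$ and $X^{-1}\cong\bigoplus_jN_j[-j]$. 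Using that $\otimes$ restricts to $\cH$ (hence to $\kar(\cH)$), the relation $X\otimes X^{-1}\cong\mathbf{1}$ becomes
\[
\bigoplus_{k\in\bbZ}\Bigl(\bigoplus_{i+j=k}M_i\otimes N_j\Bigr)[-k]\;\cong\;\mathbf{1}.
\]
Since $\mathbf{1}$ is pure of weight $0$, every summand $M_i\otimes N_j$ with $i+j\neq 0$ must vanish, and by locality either $M_i=0$ or $N_j=0$ in each such case. A short extremality argument (pick $i_0$ with $M_{i_0}\neq 0$; by locality $M_{i_0}\otimes N_j\neq 0$ whenever $N_j\neq 0$, which forces $j=-i_0$; symmetrically for $M$) then collapses the decompositions to $X\cong M_{i_0}[-i_0]$ in $\kar(\cT)$ with $M_{i_0}\in\Pic(\kar(\cH))$.

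\emph{Descent and main obstacle.} Finally, $M_{i_0}=X[i_0]\in\cT$ and $M_{i_0}^{-1}=X^{-1}[-i_0]\in\cT$; combined with the identity $\kar(\cH)\cap\cT=\cH$ (which comes from the extension of $w$ from $\cT$ to $\kar(\cT)$ with heart $\kar(\cH)$), this places $M_{i_0}\in\Pic(\cH)$ and promotes the isomorphism $X\cong M_{i_0}[-i_0]$ from $\kar(\cT)$ to $\cT$. The step I expect to be the main obstacle is the decomposition: converting the heart-level semi-simplicity of $\kar(\cH)$ into the global statement that every object of $\kar(\cT)$ splits as a direct sum of shifted heart objects---equivalently, that the weight-complex functor is an equivalence in this setting---requires a careful interplay between the $\Hom$-orthogonality of the weight structure, boundedness, and semi-simplicity, and is where I would expect to lean on, or adapt, results from Bondarko's theory of weight structures.
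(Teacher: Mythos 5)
Your ``decomposition and contraction'' step rests on a claim that is false, not merely delicate: the weight-complex functor $t\colon\kar(\cT)\to K^b(\kar(\cH))$ is \emph{not} an equivalence in this setting, and objects of $\kar(\cT)$ do \emph{not} in general split as $\bigoplus_k M_k[-k]$ with $M_k\in\kar(\cH)$. For a bounded weight structure, $t$ is conservative, but it kills every ``negative-degree'' map between heart objects and is therefore not faithful. Precisely the situations this theorem is designed to handle exhibit the failure: in $\mathrm{NMAM}(k;\bbQ)$ the heart $\mathrm{AM}(k;\bbQ)$ is Karoubian semi-simple, yet $\Hom_\cT(\mathbf{1},\mathbf{1}[-n])\simeq K_n(k)_\bbQ$ is non-zero for suitable $n>0$, whereas $\Hom_{K^b(\cH)}(\mathbf{1},\mathbf{1}[-n])=0$. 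The cone of such a non-zero map of negative degree is an object whose weight filtration fails to split---the obstruction to splitting a weight decomposition of an object sits in exactly these negative Ext-groups, about which the weight-structure axioms say nothing---so it is not a sum of shifted heart objects. Your extremality argument is then applied to a decomposition that is unavailable, and this is the crux of the whole proof, not a technical loose end.

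The paper's proof deliberately avoids this. It never decomposes objects; instead it uses Bondarko's weight-complex construction only to produce heart-valued homological functors $\mathrm{H}_n\colon\cT\to\cH$ (which requires functoriality up to homotopy, far short of an equivalence) and the K\"unneth spectral sequence $E_1^{pq}=\mathrm{H}_q(a^p\otimes b)\Rightarrow \mathrm{H}_{p+q}(a\otimes b)$. Semi-simplicity of $\cH$ identifies $E_2^{pq}\simeq \mathrm{H}_p(a)\otimes\mathrm{H}_q(b)$, locality gives non-vanishing at the two extremal bidegrees, and $a\otimes b\simeq\mathbf{1}$ then forces $\mathrm{H}_n(a)$ and $\mathrm{H}_n(b)$ to be concentrated in single (opposite) degrees. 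The final, and genuinely available, ingredient is a conservativity statement (Proposition~\ref{prop:conservativity-1}): for a bounded weight structure with abelian semi-simple heart, $b\in\cT^{w=m}$ if and only if $\mathrm{H}_n(b)=0$ for all $n\neq m$. This is strictly weaker than your splitting claim and is exactly what is needed to conclude that $b$ is $w$-pure. If you replace your decomposition step with this spectral-sequence-plus-conservativity route, the surrounding injectivity and ``pure implies in $\Pic(\cH)\times\bbZ$'' reasoning you wrote goes through.
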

As explained in \cite[\S4.3]{Bondarko-Weight}, every bounded weight structure is uniquely determined by its heart. Concretely, given any additive subcategory $\cH' \subset \cT$ which generates $\cT$ and for which we have $\Hom_{\cH'}(a,b[n])=0$ for every $n>0$ and $a,b \in \cH'$, there exists a unique bounded weight structure on $\cT$ with heart the Karoubi-closure of $\cH'$ in $\cT$. Roughly speaking, the construction of a bounded weight structure on a triangulated category amounts simply to the choice of an additive subcategory with trivial positive Ext-groups.

Our second main result formalizes the conceptual idea that the $w$-Picard property satisfies a ``global-to-local'' descent principle:
\begin{theorem}\label{thm:main2}
Assume the following:
\begin{itemize}
\item[(A1)] The heart $\cH$ of the weight structure $w$ is essentially small and $R$-linear for some commutative indecomposable Noetherian ring $R$. Moreover, $\Hom_\cH(a,b)$ is a finitely generated flat $R$-module for any two objects $a, b \in \cH$;
\item[(A2)] For every residue field $\kappa(\mathfrak{p})$, with $\mathfrak{p} \in \mathrm{Spec}(R)$, there exists a symmetric monoidal triangulated category $(\cT_{\kappa(\mathfrak{p})}, \otimes, {\bf 1})$ equipped with a weight structure $w_{\kappa(\mathfrak{p})}$ and with a weight-exact symmetric monoidal functor $\iota_{\kappa(\mathfrak{p})}\colon \cT \to \cT_{\kappa(\mathfrak{p})}$. Moreover, the functor $\iota_{\kappa(\mathfrak{p})}$ induces an equivalence of categories between the Karoubization of $\cH\otimes_R \kappa(\mathfrak{p})$ and $\cH_{\kappa(\mathfrak{p})}$. 
\end{itemize}
Under assumptions (A1)-(A2), if the categories $\cT_{\kappa(\mathfrak{p})}$ have the $w_{\kappa(\mathfrak{p})}$-Picard property, then the category $\cT$ has the $w$-Picard property.
\end{theorem}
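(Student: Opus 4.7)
The plan is to combine the fiberwise $w_{\kappa(\mathfrak{p})}$-Picard property with a faithfully-flat-descent argument provided by the family of weight-exact symmetric monoidal functors $\{\iota_{\kappa(\mathfrak{p})}\}_{\mathfrak{p}\in\mathrm{Spec}(R)}$, treating surjectivity and injectivity of the group homomorphism $(a,n)\mapsto a[n]$ separately.

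For surjectivity, take $x\in\mathrm{Pic}(\cT)$; symmetric monoidality of $\iota_{\kappa(\mathfrak{p})}$ sends $x$ to $\mathrm{Pic}(\cT_{\kappa(\mathfrak{p})})$, and the fiberwise Picard property yields $a_{\mathfrak{p}}\in\mathrm{Pic}(\cH_{\kappa(\mathfrak{p})})$ and $n_{\mathfrak{p}}\in\bbZ$ with $\iota_{\kappa(\mathfrak{p})}(x)\cong a_{\mathfrak{p}}[n_{\mathfrak{p}}]$. The first key step is to show that $n_{\mathfrak{p}}$ is independent of $\mathfrak{p}$. I would use the bounded weight truncation tower $\{x_{\geq i}\}_{i\in\bbZ}$ of $x$ in $\cT$: weight-exactness of $\iota_{\kappa(\mathfrak{p})}$ identifies $\iota_{\kappa(\mathfrak{p})}(x_{\geq i})$ with the corresponding truncation of $\iota_{\kappa(\mathfrak{p})}(x)$, so $-n_{\mathfrak{p}}$ is the unique weight $i$ with $\iota_{\kappa(\mathfrak{p})}(x_{\geq i})\neq 0$ and $\iota_{\kappa(\mathfrak{p})}(x_{\geq i+1})=0$. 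Since by (A1) each $\mathrm{End}_{\cT}(x_{\geq i})$ is a finitely generated $R$-module (built by finitely many extensions of Hom groups between heart objects, which are finitely generated and flat), and the vanishing of $\iota_{\kappa(\mathfrak{p})}(x_{\geq i})$ is equivalent, via (A2), to $\mathrm{End}_{\cT}(x_{\geq i})\otimes_{R}\kappa(\mathfrak{p})=0$, the vanishing locus $\{\mathfrak{p}:\iota_{\kappa(\mathfrak{p})}(x_{\geq i})=0\}$ is open in $\mathrm{Spec}(R)$. Hence $\mathfrak{p}\mapsto n_{\mathfrak{p}}$ is locally constant with finite image (by boundedness of $w$) and therefore constant on $\mathrm{Spec}(R)$, which is connected because $R$ is indecomposable. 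Writing $n$ for this common value and replacing $x$ by $x[-n]$, the same vanishing-locus argument applied to the non-zero-weight truncations of $x$ (which are killed on every fiber) forces $x\in\cH$; the identical reasoning for $x^{-1}$ combined with $x\otimes x^{-1}\cong\mathbf{1}$ promotes $x$ to $\mathrm{Pic}(\cH)$.

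For injectivity, assume $a[n]\cong\mathbf{1}$ in $\cT$ with $a\in\mathrm{Pic}(\cH)$ and $n\in\bbZ$. Applying $\iota_{\kappa(\mathfrak{p})}$ and invoking the fiberwise Picard property forces $n=0$ and $\iota_{\kappa(\mathfrak{p})}(a)\cong\mathbf{1}_{\kappa(\mathfrak{p})}$ for every $\mathfrak{p}$. Since by (A1) the $R$-module $\mathrm{Hom}_{\cH}(\mathbf{1},a)$ is finitely generated and flat, and (A2) identifies its base change with $\mathrm{Hom}_{\cH_{\kappa(\mathfrak{p})}}(\mathbf{1}_{\kappa(\mathfrak{p})},\iota_{\kappa(\mathfrak{p})}(a))$, standard faithfully-flat-descent lifts the compatible fiberwise isomorphisms to an isomorphism $a\cong\mathbf{1}$ in $\cH$. \textbf{The main obstacle} is the constancy of $n_{\mathfrak{p}}$ in the surjectivity step: this is where the compatibility of $\iota_{\kappa(\mathfrak{p})}$ with weight truncations, the $R$-flatness of Hom modules from (A1), the identification of Karoubized hearts from (A2), and the connectedness of $\mathrm{Spec}(R)$ must all be used in concert; the subsequent concentration-in-$\cH$ and Picard-membership arguments are then straightforward consequences of the same fiberwise vanishing principle.
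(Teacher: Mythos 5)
Your high-level strategy is in the right spirit and matches the paper's: reduce to showing that the weight of $\iota_{\kappa(\mathfrak{p})}(x)$ is independent of $\mathfrak{p}$ via the connectedness of $\mathrm{Spec}(R)$, then descend the concentration of $x$ in a single weight. However, the key technical step is not justified and, as written, fails. The two central problems are as follows.

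First, you claim that $\iota_{\kappa(\mathfrak{p})}(x_{\geq i})=0$ is equivalent to $\mathrm{End}_\cT(x_{\geq i})\otimes_R\kappa(\mathfrak{p})=0$ and that $\mathrm{End}_\cT(x_{\geq i})$ is a finitely generated $R$-module. Neither is warranted. Assumption (A2) only identifies $\mathrm{Kar}(\cH\otimes_R\kappa(\mathfrak{p}))$ with $\cH_{\kappa(\mathfrak{p})}$, i.e.\ it controls Hom-groups between objects of the \emph{heart}, not between general objects of $\cT$; there is no reason that $\iota_{\kappa(\mathfrak{p})}$ should induce an isomorphism $\mathrm{End}_{\cT_{\kappa(\mathfrak{p})}}(\iota_{\kappa(\mathfrak{p})}(y))\cong\mathrm{End}_\cT(y)\otimes_R\kappa(\mathfrak{p})$ for an object $y=x_{\geq i}$ spread over several weights. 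Likewise, (A1) only bounds $\Hom_\cH(a,b)$ for $a,b\in\cH$; the groups $\Hom_\cT(a,b[-n])$ for $n>0$ ("negative Ext-groups") between heart objects are completely unconstrained, and these enter $\mathrm{End}_\cT(x_{\geq i})$. (Indeed, the applications of the theorem in the paper crucially concern categories such as $\mathrm{NMAM}(k;R)$ where these negative Ext-groups are highly non-finitely-generated.) The paper circumvents this by never looking at Hom-groups of non-pure objects: it constructs homological functors $\mathrm{H}^S_n\colon\cT\to\mathrm{PShv}^S(\cH)$ out of the weight complex $t(x)$ (a complex of heart objects), so that (A1) and (A2) give exact control after base change, and then applies a universal coefficients theorem and Nakayama's lemma over the local ring $Q$ obtained by localizing $R/\mathfrak{P}$ at $\mathfrak{p}$.

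Second, even granting an open vanishing locus, you would only obtain one type of semi-continuity of $\mathfrak{p}\mapsto m_{\kappa(\mathfrak{p})}$, which is not enough to conclude local constancy on a connected space. The paper addresses precisely this point: it proves one inequality $m_{\kappa(\mathfrak{p})}\geq m_{\kappa(\mathfrak{P})}$ (for $\mathfrak{p}$ in the closure of $\mathfrak{P}$) by the Nakayama/universal-coefficients argument, and then invokes the categorical self-duality of weight structures (Remark~\ref{rwd}) to obtain the reverse inequality; you would need an analogous device. Two smaller issues: your injectivity step via ``faithfully-flat descent'' along residue fields is both unnecessary (injectivity of $\mathrm{Pic}(\cH)\times\bbZ\to\mathrm{Pic}(\cT)$ holds unconditionally from the weight-structure orthogonality axioms, cf.\ Proposition~\ref{prop:auxiliar}) and suspect, since $\coprod_{\mathfrak{p}}\mathrm{Spec}(\kappa(\mathfrak{p}))\to\mathrm{Spec}(R)$ is not a flat cover for non-Artinian $R$; and your final ``concentration in $\cH$'' step implicitly requires a conservativity statement such as Proposition~\ref{prop:conservativity-2}, which again needs the homological-functor framework rather than raw End-groups.
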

\begin{remark}\label{rk:maximal}
\begin{itemize}
\item[(i)] At assumption (A1) we can consider more generally the case where $R$ is decomposable; consult Remark \ref{rcoeff}(i).
\item[(ii)] As it will become clear from the proof of Theorem \ref{thm:main2}, at assumption (A2) it suffices to consider the residue fields $\kappa(\mathfrak{m})$ associated to the maximal and minimal prime ideals of $R$; consult also Remark \ref{rcoeff}(ii).
\end{itemize}
\end{remark}
Due to their generality and simplicity, we believe that Theorems \ref{thm:main1}-\ref{thm:main2} will soon be part of the toolkit of every mathematician interested in Picard groups of triangulated categories. In the next section, we illustrate the usefulness of these results by computing the Picard group of several important categories of motivic nature; consult also \S\ref{sub:topological} for a topological application.
\section{Applications}\label{sec:applications}
Let $k$ be a base field, which we assume perfect, and $R$ a commutative ring of coefficients, which we assume indecomposable and Noetherian. Voevodsky's category of geometric mixed motives $\mathrm{DM}_{\mathrm{gm}}(k; R)$ (see \cite{Princeton, MVW}), Morel-Voevodsky's stable $\bbA^1$-homotopy category $\mathrm{SH}(k)$ (see \cite{Morel1,Morel2,ICM}), and Kontsevich's category of noncommutative mixed motives $\mathrm{KMM}(k; R)$ (see \cite{IAS,Miami,finMot,book}) play nowadays a central role in the motivic realm. A major challenge, which seems completely out of reach at the present time, is the computation of the Picard group of these symmetric monoidal triangulated categories\footnote{Consult Bachmann \cite{bach}, resp. Hu \cite{PoHu}, for the construction of $
\otimes$-invertible objects in the motivic category $\mathrm{DM}_{\mathrm{gm}}(k; \bbZ/2\bbZ)$, resp. $\mathrm{SH}(k)$, associated to quadrics.}. In what follows, making use of Theorems \ref{thm:main1}-\ref{thm:main2}, we achieve this goal in the case~of~certain~important~subcategories. 
\subsection{Mixed Artin motives}
The category of {\em mixed Artin motives $\mathrm{DMA}(k;R)$} is defined as the thick triangulated subcategory of $\mathrm{DM}_{\mathrm{gm}}(k; R)$ generated by the motives $M(X)_R$ of zero-dimensional smooth $k$-schemes $X$. The smallest additive, Karoubian, full subcategory of $\mathrm{DMA}(k;R)$ containing the objects $M(X)_R$ identifies with the (classical) category of Artin motives $\mathrm{AM}(k;R)$.
\begin{theorem}\label{thm:computation-1}
When the degrees of the finite separable field extensions of $k$ are invertible in $R$, we have $\mathrm{Pic}(\mathrm{DMA}(k;R)) \simeq \mathrm{Pic}(\mathrm{AM}(k;R))\times \bbZ$.
\end{theorem}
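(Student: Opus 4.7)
The plan is to construct a bounded weight structure on $\mathrm{DMA}(k;R)$ whose heart is $\mathrm{AM}(k;R)$ and then invoke Theorems \ref{thm:main1} and \ref{thm:main2}.

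First, I would produce the weight structure via the generator criterion recalled just after Theorem \ref{thm:main1}. Let $\cH' \subset \mathrm{DMA}(k;R)$ be the additive subcategory generated by the motives $M(X)_R$ with $X$ a zero-dimensional smooth $k$-scheme; any such $X$ is a finite disjoint union of spectra of finite separable extensions of $k$, so $\cH'$ tautologically generates $\mathrm{DMA}(k;R)$. What remains is the vanishing $\Hom(M(\mathrm{Spec}(L))_R, M(\mathrm{Spec}(L'))_R[n]) = 0$ for $n>0$ and $L, L'$ finite separable over $k$. Using the standard identification of such $\mathrm{DM}$-Hom groups, this group is the continuous Galois cohomology $H^n(\mathrm{Gal}(k^{\mathrm{sep}}/L), R[\mathrm{Gal}(k^{\mathrm{sep}}/k)/\mathrm{Gal}(k^{\mathrm{sep}}/L')])$, which vanishes for $n>0$ by Maschke's theorem, since by hypothesis the orders of the finite Galois quotients acting are invertible in $R$. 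Hence there is a unique bounded weight structure $w$ on $\mathrm{DMA}(k;R)$ whose heart is the Karoubi-closure of $\cH'$, namely $\mathrm{AM}(k;R)$, and its symmetric monoidal structure restricts to the heart because products of zero-dimensional smooth $k$-schemes are again zero-dimensional smooth.

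Next I would apply Theorem \ref{thm:main2}. Assumption (A1) follows from the permutation description: $\mathrm{AM}(k;R)$ is essentially small, $R$-linear, and its Hom groups are finitely generated free $R$-modules. For (A2), the base-change functor $\iota_{\kappa(\mathfrak{p})} \colon \mathrm{DMA}(k;R) \to \mathrm{DMA}(k;\kappa(\mathfrak{p}))$ is symmetric monoidal, and since the relevant degrees remain invertible in every residue field $\kappa(\mathfrak{p})$ the same construction equips $\mathrm{DMA}(k;\kappa(\mathfrak{p}))$ with a bounded weight structure $w_{\kappa(\mathfrak{p})}$ of heart $\mathrm{AM}(k;\kappa(\mathfrak{p}))$, with respect to which $\iota_{\kappa(\mathfrak{p})}$ is weight-exact; the required equivalence between the Karoubi-closure of $\mathrm{AM}(k;R)\otimes_R\kappa(\mathfrak{p})$ and $\mathrm{AM}(k;\kappa(\mathfrak{p}))$ is immediate from the same permutation description. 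Theorem \ref{thm:main2} then reduces the problem to checking the $w_{\kappa(\mathfrak{p})}$-Picard property one residue field at a time.

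For each residue field I would invoke Theorem \ref{thm:main1}: over a field $F$ in which the relevant orders are invertible, $\mathrm{AM}(k;F)$ identifies with the category of finite-dimensional continuous $F$-linear representations of $\mathrm{Gal}(k^{\mathrm{sep}}/k)$, which is automatically Karoubian, semi-simple by Maschke's theorem, and local since the $F$-dimension of a tensor product is the product of the $F$-dimensions. I expect the main obstacle to be the careful identification of the Hom groups in $\mathrm{DMA}(k;R)$ between motives of zero-dimensional smooth $k$-schemes with Galois cohomology, together with the verification that this identification is compatible with base change to the residue fields; once that input is secured, the conclusion follows from Maschke's theorem and Theorems \ref{thm:main1}-\ref{thm:main2}.
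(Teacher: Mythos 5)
Your overall strategy (construct a bounded weight structure with heart $\mathrm{AM}(k;R)$, reduce to residue fields via Theorem \ref{thm:main2}, then apply Theorem \ref{thm:main1} at each residue field using Maschke's theorem for semi-simplicity of the heart) is exactly the paper's strategy, specialized from the proof of Theorem \ref{thm:computation-2}. However, there is one genuine error in your justification of the negativity of $\mathrm{AM}(k;R)$ inside $\mathrm{DMA}(k;R)$.

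You identify $\Hom_{\mathrm{DMA}(k;R)}(M(\mathrm{Spec}\,L)_R, M(\mathrm{Spec}\,L')_R[n])$ with continuous Galois cohomology $H^n(\mathrm{Gal}(k^{\mathrm{sep}}/L), R[\Gamma/\Gamma_{L'}])$ and deduce vanishing for $n>0$ from Maschke. This identification is wrong: $\mathrm{DM}_{\mathrm{gm}}(k;R)$ is built on the \emph{Nisnevich} topology, and the Hom groups between motives of smooth projective schemes are computed by \emph{higher Chow groups}, not by Galois cohomology (which is what one gets after passing to the \'etale-local version). Concretely, for zero-dimensional smooth $X$, $Y$ one has $\Hom_{\mathrm{DM}}(M(X)_R, M(Y)_R[n]) \simeq CH^0(X\times Y, -n)_R$, and this vanishes for $n>0$ simply because higher Chow groups of negative index are zero. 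The vanishing is therefore \emph{unconditional}: the invertibility hypothesis plays no role in the construction of the weight structure, contrary to what your argument suggests. The paper uses the invertibility hypothesis only once, in the final step, to obtain semi-simplicity of the Karoubized heart over each residue field via Maschke's theorem (applied to the finite Galois quotients $\mathrm{G}_i$), which is what Theorem \ref{thm:main1} requires. Your proof happens not to collapse because the weight structure does in fact exist, but the stated reason is incorrect and would lead a reader to misjudge where the hypothesis is actually needed; moreover, a reader relying on the Galois-cohomology description of the $\mathrm{DM}$-Hom groups would get wrong answers for the degree-zero and negative-degree Hom groups as well.

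The remaining steps of your proposal (free $R$-linearity of the heart's Hom modules from the permutation description, base change to residue fields for (A2), Maschke plus locality of $\mathrm{Rep}_{\kappa(\mathfrak{p})}(\Gamma)$ for Theorem \ref{thm:main1}) all match the paper's argument.
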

\begin{example}\label{ex:cases}
Theorem \ref{thm:computation-1} holds, in particular, in the following cases:
\begin{itemize}
\item[(i)] The field $k$ is arbitrary and $R$ is a $\bbQ$-algebra;
\item[(ii)] The field $k$ is formally real (\eg\ $k=\bbR$) and $1/2 \in R$;
\item[(iii)] Let $p$ be a (fixed) prime number, $l$ a perfect field, and $H$ a Sylow pro-$p$-subgroup of $\mathrm{Gal}(\overline{l}/l)$. Theorem \ref{thm:computation-1} holds also with $k:=\overline{l}^H$ and $1/p \in R$.
\end{itemize}
\end{example}
Whenever $R$ is a field, the $R$-linearized Galois-Grothendieck correspondence induces a $\otimes$-equivalence between $\mathrm{AM}(k;R)$ and the category $\mathrm{Rep}_R(\Gamma)$ of continuous finite dimensional $R$-linear representations of the absolute Galois group $\Gamma:=\mathrm{Gal}(\overline{k}/k)$. Since the $\otimes$-invertible objects of $\mathrm{Rep}_R(\Gamma)$ are the $1$-dimensional $\Gamma$-representations, the Picard group $\mathrm{Pic}(\mathrm{AM}(k;R))\simeq \mathrm{Pic}(\mathrm{Rep}_R(\Gamma))$ identifies then with the group of continuous characters from $\Gamma^{\mathrm{ab}}$ to $R^\times$. In the particular case where $k=\bbQ$, the profinite group $\Gamma^{\mathrm{ab}}$ identifies with $\widehat{\bbZ}^\times$. Consequently, all the elements of $\mathrm{Rep}_R(\Gamma)$ can be represented by Dirichlet characters. In the particular case where $\mathrm{char}(k)\neq 2$ and $R=\bbQ$, we have the following computation\footnote{A similar computation holds in characteristic $2$ with $k^\times/(k^\times)^2$ replaced by $k/\{\lambda + \lambda^2\,|\, \lambda \in k\}$.} (due to Peter \cite[Pages~340-341]{Peter})
\begin{eqnarray*}
k^\times/(k^\times)^2\stackrel{\simeq}{\too} \mathrm{Pic}(\mathrm{Rep}_\bbQ(\Gamma)) && \lambda \mapsto (\Gamma \twoheadrightarrow \mathrm{Gal}(k(\sqrt{\lambda})/k) \stackrel{\sigma \mapsto -1}{\too} \bbQ^\times)\,,
\end{eqnarray*}
where $\sigma$ stands for the generator of the Galois group $\mathrm{Gal}(k(\sqrt{\lambda})/k)\simeq \bbZ/2\bbZ$.

Let $\cA(k;R)$ be an additive, Karoubian, symmetric monoidal, full subcategory of $\mathrm{AM}(k;R)$, and $\mathrm{D}\cA(k;R)$ the thick triangulated subcategory of $\mathrm{DMA}(k;R)$ generated by the motives associated to the objects of $\cA(k;R)$. Under these notations, Theorem \ref{thm:computation-1} admits the following generalization:
\begin{theorem}\label{thm:computation-2}
Assume that there exist separable field extensions~$l_i/k$~such~that:
\begin{itemize}
\item[(B1)] Every object of the category $\cA(k;R)$ is isomorphic to a retract of a finite direct sum of the motives associated to the field extensions $l_i/k$;
\item[(B2)] The degrees of the finite field extensions $l_i/k$ are invertible in $R$.
\end{itemize}
Under assumptions (B1)-(B2), we have $\mathrm{Pic}(\mathrm{D}\cA(k;R)) \simeq \mathrm{Pic}(\cA(k;R)) \times \bbZ$.
\end{theorem}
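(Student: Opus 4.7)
The plan is to imitate the proof of Theorem \ref{thm:computation-1} in the smaller category $\mathrm{D}\cA(k;R)$, combining Theorems \ref{thm:main1} and \ref{thm:main2}. The whole argument hinges on constructing a single bounded weight structure $w$ on $\mathrm{D}\cA(k;R)$ whose heart is $\cA(k;R)$, and then using Theorem \ref{thm:main2} to reduce the $w$-Picard property to the case where the coefficient ring is a field, where Theorem \ref{thm:main1} will apply.

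First, I would build $w$ using the criterion recalled right after Theorem \ref{thm:main1}. Since $\cA(k;R)$ is additive, Karoubian, and generates $\mathrm{D}\cA(k;R)$ by definition, it suffices to verify that $\Hom(a,b[n]) = 0$ for $a,b \in \cA(k;R)$ and $n>0$. By (B1) and biadditivity this reduces to the vanishing of $\Hom_{\mathrm{DM}_{\mathrm{gm}}(k;R)}(M(l_i)_R, M(l_j)_R[n])$, which is controlled by the motivic cohomology of $\Spec(l_i \otimes_k l_j)$ in weight zero; (B2) supplies the required vanishing, in exactly the way it does for the proof of Theorem \ref{thm:computation-1}. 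The Karoubi-closure of $\cA(k;R)$ inside $\mathrm{D}\cA(k;R)$ is then $\cA(k;R)$ itself, so it is the heart of $w$, and $w$ is bounded by construction.

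Next, I would check the hypotheses of Theorem \ref{thm:main2}. Assumption (A1) is immediate from (B1)-(B2) and the identification of $\cA(k;R)$ as a full subcategory of finite permutation-type Galois modules over $R$. For (A2), I would set $\cT_{\kappa(\mathfrak{p})} := \mathrm{D}\cA(k;\kappa(\mathfrak{p}))$ with the weight structure built as in the first step but over the field $\kappa(\mathfrak{p})$ (the hypothesis (B2) remains valid for $\kappa(\mathfrak{p})$, and by Remark \ref{rk:maximal}(ii) only the maximal/minimal primes matter), and take $\iota_{\kappa(\mathfrak{p})}$ to be the base-change functor $-\otimes_R \kappa(\mathfrak{p})$: it is symmetric monoidal and weight-exact by construction, and since both hearts are generated as additive Karoubian categories by the motives associated with the $l_i$, it induces the desired equivalence on hearts. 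Finally, Theorem \ref{thm:main1} applies to each $\cT_{\kappa(\mathfrak{p})}$: the heart $\cA(k;\kappa(\mathfrak{p}))$ embeds fully faithfully into the category of continuous finite-dimensional $\kappa(\mathfrak{p})$-representations of $\mathrm{Gal}(\overline{k}/k)$ factoring through the finite Galois quotient cut out by the $l_i$; by (B2) the order of this quotient is invertible in $\kappa(\mathfrak{p})$, so Maschke's theorem yields semi-simplicity, while locality is automatic because the tensor product of two nonzero representations of a finite group over a field is nonzero.

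The delicate point, to my eye, will be the verification that $\iota_{\kappa(\mathfrak{p})}$ genuinely induces an equivalence between the Karoubization of $\cA(k;R)\otimes_R \kappa(\mathfrak{p})$ and $\cA(k;\kappa(\mathfrak{p}))$: one must ensure that no new idempotents appear after base change beyond those already governed by (B1), and that the transfer maps implicit in the splitting behave well. This compatibility is precisely what the invertibility hypothesis (B2) purchases uniformly across all residue fields of $R$.
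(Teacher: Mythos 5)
Your proposal follows the paper's argument almost step for step: construct a bounded weight structure on $\mathrm{D}\cA(k;R)$ with heart $\cA(k;R)$ via negativity, verify (A1) and (A2) of Theorem~\ref{thm:main2} using the base-change functor $-\otimes_R\kappa(\mathfrak{p})$ to $\mathrm{D}\cA(k;\kappa(\mathfrak{p}))$, then apply Theorem~\ref{thm:main1} after establishing that $\cA(k;\kappa(\mathfrak{p}))$ is semi-simple (Maschke) and local. That is exactly the route the paper takes.

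Two small imprecisions worth flagging, neither fatal. First, you say that (B2) supplies the vanishing $\Hom(M(l_i)_R, M(l_j)_R[n])=0$ for $n>0$; in fact this vanishing is unconditional for Artin motives --- the positive-degree, weight-zero motivic cohomology of a zero-dimensional smooth $k$-scheme is zero regardless of coefficients --- and the paper invokes no invertibility hypothesis here. (B2) is needed only later, for Maschke semi-simplicity of the heart over the residue fields. Second, you speak of ``the finite Galois quotient cut out by the $l_i$''; if the family $\{l_i\}$ is infinite this quotient is a profinite group, not a finite one. The paper handles this correctly by writing $\mathrm{G}=\lim_i\mathrm{G}_i$ and identifying $\cA(k;\kappa(\mathfrak{p}))$ with the filtered colimit $\mathrm{colim}_i\,\mathrm{Rep}_{\kappa(\mathfrak{p})}(\mathrm{G}_i)$, then applying Maschke at each finite level; you should phrase your semi-simplicity argument that way. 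On the point you flagged as delicate --- that $\iota_{\kappa(\mathfrak{p})}$ induces an equivalence on Karoubized hearts --- the paper does assert it with little elaboration, relying on the fact that both hearts are, by (B1), generated as Karoubian additive categories by the same objects $M(l_i)$ and that the relevant Hom-modules $CH^0(X\times Y)_R$ are free, so base change and Karoubization interact as expected; your instinct to scrutinize this is reasonable, but it is not where (B2) is really doing work.
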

\begin{example}[Mixed Dirichlet motives]\label{ex:MDM}
Let $R$ be a field. Following Wildeshaus \cite[Def.~3.4]{Jorg}, a {\em Dirichlet motive} is an Artin motive for which the corresponding $\Gamma$-representation factors through an abelian (finite) quotient. Let $\cA(k;R)$ be the category of Dirichlet motives. In this case, the associated symmetric monoidal triangulated category $\mathrm{D}\cA(k;R)$ is called the category of {\em mixed Dirichlet motives}. Since the $\otimes$-invertible objects of $\mathrm{Rep}_R(\Gamma)$ are the $1$-dimensional representations, and all these representations factor through an abelian (finite) quotient, the inclusion of categories $\cA(k;R) \subset \mathrm{AM}(k;R)$ yields an isomorphism $\mathrm{Pic}(\cA(k;R)) \simeq \mathrm{Pic}(\mathrm{AM}(k;R))$. Making use of Theorem \ref{thm:computation-2}, we conclude that $\mathrm{Pic}(\mathrm{D}\cA(k;R))\simeq \mathrm{Pic}(\mathrm{AM}(k;R))\times \bbZ$. Intuitively speaking, the difference between (mixed) Dirichlet motives and (mixed) Artin motives is not detected by the Picard group. 
\end{example}
%
%
%
%
%
\subsection{Mixed Artin-Tate motives}
The category $\mathrm{DMAT}(k;R)$ of {\em mixed Artin-Tate motives} is defined as the thick triangulated subcategory of $\mathrm{DM}_{\mathrm{gm}}(k;R)$ generated by the motives $M(X)_R$ of zero-dimensional smooth $k$-schemes $X$ and by the Tate motives $R(m), m \in \bbZ$.
\begin{theorem}\label{thm:computation-3}
When the degrees of the finite separable field extensions of $k$ are invertible in $R$, we have $\mathrm{DMAT}(k;R)\simeq \mathrm{Pic}(\mathrm{AM}(k;R))\times \bbZ \times \bbZ$.
\end{theorem}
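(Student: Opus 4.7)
The plan is to apply Theorems~\ref{thm:main1}--\ref{thm:main2} to the Chow weight structure induced on $\mathrm{DMAT}(k;R)$ from Bondarko's Chow weight structure on $\mathrm{DM}_{\mathrm{gm}}(k;R)$. First I would verify that this weight structure restricts to the thick triangulated subcategory $\mathrm{DMAT}(k;R)$, which holds because the generating objects $M(X)_R(m)$ already lie in the Chow heart. This produces a bounded weight structure $w$ whose heart $\cH$ is the Karoubi closure of the additive subcategory spanned by the motives $M(X)_R(m)$ with $X$ a zero-dimensional smooth $k$-scheme and $m\in\bbZ$. Since morphisms in the Chow heart are given by correspondences, one has
\[
\Hom_\cH(M(X)_R(m),\, M(Y)_R(n)) \simeq \mathrm{CH}^{n-m}(X\times Y)_R,
\]
which vanishes for $m\neq n$ because $\dim(X\times Y)=0$. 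Consequently $\cH$ is naturally $\bbZ$-graded: it is equivalent to the Karoubi closure of $\bigoplus_{m\in\bbZ}\mathrm{AM}(k;R)$, with symmetric monoidal structure $a(m)\otimes b(n)=(a\otimes b)(m+n)$.

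Next I would check the hypotheses of Theorem~\ref{thm:main1} when $R$ is a field, and of Theorem~\ref{thm:main2} in general, following the template already used in the proof of Theorem~\ref{thm:computation-1}. Over each residue field $\kappa(\mathfrak{p})$, the relevant heart is the Karoubi closure of the $\bbZ$-graded version of $\mathrm{AM}(k;\kappa(\mathfrak{p}))$, which is semi-simple by Maschke's theorem (thanks to the invertibility hypothesis); semi-simplicity is trivially preserved by the $\bbZ$-graded construction. For locality, I would use an extremal-degree argument: writing nonzero objects as $a=\bigoplus_m a_m(m)$ and $b=\bigoplus_n b_n(n)$, and letting $m_0,n_0$ be the smallest indices with $a_{m_0},b_{n_0}\neq 0$, the $(m_0+n_0)$-graded piece of $a\otimes b$ equals $a_{m_0}\otimes b_{n_0}$, which is nonzero by the locality of $\mathrm{AM}(k;\kappa(\mathfrak{p}))\simeq \Rep_{\kappa(\mathfrak{p})}(\Gamma)$ already exploited for Theorem~\ref{thm:computation-1}.

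Applying this extremal-degree argument at both the smallest and the largest indices shows that a $\otimes$-invertible object of $\cH$ must be concentrated in a single degree, hence of the form $a(m)$ with $a\in \mathrm{AM}(k;R)$ invertible. This delivers $\Pic(\cH)\simeq \Pic(\mathrm{AM}(k;R))\times \bbZ$, and combining with the shift $\bbZ$-factor supplied by the $w$-Picard property yields
\[
\Pic(\mathrm{DMAT}(k;R))\simeq \Pic(\mathrm{AM}(k;R))\times \bbZ\times\bbZ.
\]
The main technical obstacle I anticipate is verifying that Bondarko's Chow weight structure restricts cleanly to $\mathrm{DMAT}(k;R)$ together with the precise identification of the heart as a $\bbZ$-graded category; once this structural input is in hand, the semi-simplicity, locality, and Picard-group computations all reduce formally to the $\bbZ$-graded analogues of facts already established for $\mathrm{DMA}(k;R)$ in Theorem~\ref{thm:computation-1}.
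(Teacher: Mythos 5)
Your strategy is essentially the one the paper uses (the paper proves Theorem~\ref{thm:computation-3} as the special case $\cA(k;R)=\mathrm{AM}(k;R)$ of Theorem~\ref{thm:computation-4}), but there is a real slip in how you describe the heart, together with a smaller structural difference worth flagging.

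The slip: you repeatedly write the weight-zero generators as $M(X)_R(m)$ and assert that these ``already lie in the Chow heart.'' They do not. The pure weight-zero objects are $M(X)_R(m)[2m]$: the Tate motive $R(1)$ has Chow weight $-2$, not $0$, and only after twisting by $[2m]$ does $M(X)_R(m)[2m]$ land in the heart. Your Hom computation
$\Hom_{\cH}(M(X)_R(m),\,M(Y)_R(n))\simeq\mathrm{CH}^{n-m}(X\times Y)_R$
is in fact the correct formula for $\Hom(M(X)_R(m)[2m],\,M(Y)_R(n)[2n])$, so the arithmetic you performed is right but the objects you attached it to are wrong. The paper defines the heart $\cA\mathrm{T}(k;R)$ precisely as the Karoubi closure of the $M(X)_R(m)[2m]$, and the equivalence $\mathrm{Gr}_\bbZ\cA(k;R)\isoto \cA\mathrm{T}(k;R)$ is via $\{M(X_m)\}_m \mapsto \bigoplus_m M(X_m)(m)[2m]$; without the $[2m]$ shift the identification with the $\bbZ$-graded category fails (the objects would no longer be concentrated in weight $0$, the positive Ext-groups would not vanish, and you would not get a weight structure with the heart you want).

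The structural difference: you propose to \emph{restrict} the Chow weight structure from $\mathrm{DM}_{\mathrm{gm}}(k;R)$ to $\mathrm{DMAT}(k;R)$ and call out the restriction as the main technical obstacle. The paper sidesteps this entirely: it checks that the candidate heart $\cA\mathrm{T}(k;R)$ is a \emph{negative} subcategory (vanishing of positive Ext-groups) and then invokes Bondarko's theorem producing a bounded weight structure with prescribed Karoubi-closed heart. This is cleaner because restricting a weight structure to a thick subcategory requires verifying that weight truncations stay in the subcategory, which is not automatic; the negativity route gives you boundedness and the heart identification in one stroke. Your remaining ingredients --- Maschke semi-simplicity over residue fields, the extremal-degree locality argument for $\bbZ$-graded categories, and the factorization $\Pic(\mathrm{Gr}_\bbZ\cA)\simeq\Pic(\cA)\times\bbZ$ combined with the extra $\bbZ$ from the $w$-Picard property --- are all correct and match the paper's reasoning (the extremal-degree locality argument is in fact slightly more explicit than what the paper spells out). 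So: fix the $[2m]$ shift throughout and preferably replace ``restrict the Chow weight structure'' by ``use Bondarko's negative-heart construction,'' and your proof is sound.
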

Let $\cA(k;R)$ be an additive, Karoubian, symmetric monoidal, full subcategory of $\mathrm{AM}(k;R)$, and $\mathrm{D}\cA\mathrm{T}(k;R)$ the thick triangulated subcategory of $\mathrm{DMAT}(k;R)$ generated by the motives associated to the objects of $\cA(k;R)$ and by the Tate motives $R(m), m \in \bbZ$. Theorem \ref{thm:computation-3} admits the following generalization:
\begin{theorem}\label{thm:computation-4}
Assume that there exist finite separable field extensions $l_i/k$ as in Theorem \ref{thm:computation-2}. Under these assumptions, $\mathrm{Pic}(\mathrm{D}\cA\mathrm{T}(k;R))\simeq \mathrm{Pic}(\cA(k;R)) \times \bbZ \times \bbZ$.
\end{theorem}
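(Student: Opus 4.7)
The plan is to adapt the strategy used for Theorem \ref{thm:computation-3} to the thick triangulated subcategory $\mathrm{D}\cA\mathrm{T}(k;R)$. First, I would endow $\mathrm{D}\cA\mathrm{T}(k;R)$ with a bounded weight structure $w$ via the heart-determination principle recalled after Theorem \ref{thm:main1}: take as generating additive subcategory the additive hull $\cH'$ of $\{M(\mathrm{Spec}(l_i))_R \otimes R(m) : i,\, m \in \bbZ\}$. Hypothesis (B1) guarantees that $\cH'$ generates $\mathrm{D}\cA\mathrm{T}(k;R)$ as a thick triangulated category, and the required vanishing $\Hom(a, b[n]) = 0$ for $a, b \in \cH'$ and $n > 0$ is the same mixed motivic cohomology input used in the proof of Theorem \ref{thm:computation-3}; hypothesis (B2) is precisely what inverts the degrees of the finite extensions appearing in the motivic cohomology of $\mathrm{Spec}(l_i \otimes_k l_j)$. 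The heart $\cH$ is then the Karoubi-closure of $\cH'$ in $\mathrm{D}\cA\mathrm{T}(k;R)$.

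Next, I would identify $\cH$ with the ``$\bbZ$-graded version'' of $\cA(k;R)$. The motivic-cohomology vanishing $H^0(\mathrm{Spec}(E), R(n)) = 0$ for $n \neq 0$ and $E/k$ a finite separable extension kills all morphisms between objects of distinct Tate weights, giving an additive decomposition $\cH \simeq \bigoplus_{m \in \bbZ} \cA(k;R)_{(m)}$ whose symmetric monoidal structure is convolution: $(a \otimes R(m)) \otimes (b \otimes R(m')) \simeq (a \otimes b) \otimes R(m+m')$. In particular, $R(1)$ is a distinguished $\otimes$-invertible of infinite order in $\cH$.

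Third, I would invoke Theorem \ref{thm:main2} to reduce the verification of the $w$-Picard property for $\mathrm{D}\cA\mathrm{T}(k;R)$ to the case where $R$ is a residue field $\kappa(\mathfrak{p})$, exactly as in the proof of Theorem \ref{thm:computation-3}: the base-change functors are weight-exact, symmetric monoidal, and induce the expected equivalence on Karoubized hearts. When $R$ is a field, $\cA(k;R)$ is semi-simple (a Maschke-type argument using (B2)) and local, and both properties pass to $\cH$ through the grading description; for instance, $(a \otimes R(m)) \otimes (b \otimes R(m')) = (a \otimes b) \otimes R(m+m') = 0$ forces $a \otimes b = 0$, whence $a = 0$ or $b = 0$. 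Theorem \ref{thm:main1} therefore applies at each residue field, and Theorem \ref{thm:main2} then yields $\mathrm{Pic}(\mathrm{D}\cA\mathrm{T}(k;R)) \simeq \mathrm{Pic}(\cH) \times \bbZ$ over the original $R$.

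It remains to check that $\mathrm{Pic}(\cH) \simeq \mathrm{Pic}(\cA(k;R)) \times \bbZ$. Given an invertible $X = \bigoplus_m X_m \otimes R(m)$ with inverse $Y = \bigoplus_n Y_n \otimes R(n)$, the extremal Tate-weight components of $X \otimes Y = \mathbf{1}$ each collapse to a single summand $X_{m_\ast} \otimes Y_{n_\ast}$, nonzero by locality of $\cA(k;R)$; since $\mathbf{1}$ sits in weight $0$, this forces $m_{\max} + n_{\max} = m_{\min} + n_{\min} = 0$, whence $m_{\max} = m_{\min}$ and $X$ is concentrated in a single Tate weight $m_0$ with $X_{m_0} \in \mathrm{Pic}(\cA(k;R))$. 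For general $R$ one propagates this pointwise across the residue fields of $R$, using that $R$ is indecomposable so that $\Spec R$ is connected. Combining, $\mathrm{Pic}(\mathrm{D}\cA\mathrm{T}(k;R)) \simeq \mathrm{Pic}(\cA(k;R)) \times \bbZ \times \bbZ$. The principal obstacle is this final extremal-weight concentration, which is precisely what distinguishes the Picard computation from the purely Artin setting of Theorem \ref{thm:computation-2}.
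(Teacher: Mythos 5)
The generating additive subcategory you propose for the heart is wrong, and this is not a cosmetic slip. You take $\cH'$ to be the additive hull of the \emph{unshifted} Tate twists $M(\mathrm{Spec}(l_i))_R \otimes R(m)$, but these objects do not form a negative subcategory of $\mathrm{D}\cA\mathrm{T}(k;R)$: for instance,
$$
\Hom_{\mathrm{D}\cA\mathrm{T}(k;R)}\bigl(R,\, R(1)[1]\bigr) \;\simeq\; H^1_{\mathcal{M}}(\mathrm{Spec}(k), R(1)) \;\simeq\; k^\times \otimes_{\bbZ} R
$$
is generically nonzero, so the vanishing of positive Ext-groups required by the heart-determination principle recalled after Theorem~\ref{thm:main1} fails, and no bounded weight structure has the Karoubi-closure of your $\cH'$ as its heart. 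The paper instead takes the heart $\cA\mathrm{T}(k;R)$ to be generated by the $w$-\emph{pure} objects $M(X)_R(m)[2m]$; for those, a Hom-group in shifted degree $n>0$ is a motivic cohomology group of $\mathrm{Spec}(l_i \otimes_k l_j)$ in cohomological degree strictly above the twist (or with a negative twist), hence vanishes. This $[2m]$ shift is consistently absent from your write-up — in the generators, in the decomposition $\cH \simeq \bigoplus_m \cA(k;R)_{(m)}$, and in the invertible objects $\bigoplus_m X_m \otimes R(m)$ — so the error propagates through the identification of the heart.

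Once the shift is reinstated, the rest of your argument goes through and tracks the paper's proof closely: the heart becomes $\mathrm{Gr}_\bbZ \cA(k;R)$ via $\{M(X_m)\}_m \mapsto \bigoplus_m M(X_m)(m)[2m]$, one reduces to residue fields via Theorem~\ref{thm:main2}, applies Theorem~\ref{thm:main1} there using Maschke's theorem and locality, and finally deduces $\mathrm{Pic}(\mathrm{Gr}_\bbZ \cA(k;R)) \simeq \mathrm{Pic}(\cA(k;R)) \times \bbZ$. Your extremal-weight-concentration argument for this last isomorphism is in fact a welcome addition: the paper asserts it without comment, whereas you make explicit why locality of $\cA(k;\kappa(\mathfrak{p}))$, propagated over the connected $\mathrm{Spec}(R)$, forces any $\otimes$-invertible graded object to be concentrated in a single Tate degree.
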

\begin{example}[Mixed Tate motives]\label{ex:MixedTate}
Let $\cA(k;R)$ be the smallest additive, Karoubian, full subcategory of $\mathrm{AM}(k;R)$ containing the $\otimes$-unit. In this case, the associated symmetric monoidal triangulated category $\mathrm{D}\cA \mathrm{T}(k;R)$ is called the category of {\em mixed Tate motives}. Since $\cA(k;R)$ identifies with the category of finitely generated projective $R$-modules\footnote{Recall that the Picard group $\mathrm{Pic}(R)$ of a Dedekind domain $R$ is its ideal class group $C(R)$.}, we conclude from Theorem \ref{thm:computation-4} that $\mathrm{Pic}(\mathrm{D}\cA\mathrm{T}(k;R))\simeq \mathrm{Pic}(R) \times \bbZ \times \bbZ$. Note that we are not imposing the invertibility of any integer~in~$R$.

\end{example}
\begin{example}[Mixed Dirichlet-Tate motives]
Let $\cA(k;R)$ be the category of Dirichlet motives. In this case, the associated symmetric monoidal triangulated category $\mathrm{D}\cA\mathrm{T}(k;R)$ is called the category of {\em mixed Dirichlet-Tate motives}. Since the Picard group of $\cA(k;R)$ is isomorphic to the Picard group of $\mathrm{AM}(k;R)$, we conclude from Theorem \ref{thm:computation-4} that $\mathrm{Pic}(\mathrm{D}\cA\mathrm{T}(k;R))\simeq \mathrm{Pic}(\mathrm{AM}(k; R))\times \bbZ \times \bbZ$.
\end{example}
\subsection{Motivic spectra}
The {\em bootstrap category $\mathrm{Boot}(k)$} is defined as the thick triangulated subcategory of $\mathrm{SH}(k)$ generated by the $\otimes$-unit $\Sigma^\infty(\mathrm{Spec}(k)_+)$. The former category contains a lot of information. For example, as proved by Levine in \cite[Thm.~1]{Levine}, whenever $k$ is algebraically closed and of characteristic zero, $\mathrm{Boot}(k)$ identifies with the homotopy category of finite spectra $\cS\cH_c$. In particular, we have non-trivial negative Ext-groups
\begin{eqnarray}\label{eq:non-trivial-verylast}
\Hom_{\mathrm{Boot}(k)}(\Sigma^\infty(\mathrm{Spec}(k)_+), \Sigma^\infty(\mathrm{Spec}(k)_+)[-n])\simeq \pi_n(\bbS) && n>0\,,
\end{eqnarray}
where $\bbS$ stands for the sphere spectrum. Moreover, as proved by Morel in \cite[Thm.~6.2.2]{Morel3}, whenever $k$ is of characteristic $\neq 2$, we have a ring isomorphism 
\begin{equation}\label{eq:ring-iso}
\mathrm{End}_{\mathrm{Boot}(k)}(\Sigma^\infty(\mathrm{Spec}(k)_+))\simeq GW(k)\,,
\end{equation}
where $GW(k)$ stands for the Grothendieck-Witt ring of $k$. 
\begin{theorem}\label{thm:computation-last}
Assume that $\mathrm{char}(k)\neq 2$ and that $GW(k)$ is Noetherian. Under these assumptions, we have $\mathrm{Pic}(\mathrm{Boot}(k))\simeq \mathrm{Pic}(GW(k))\times \bbZ$.
\end{theorem}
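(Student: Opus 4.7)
The plan is to equip $\mathrm{Boot}(k)$ with a bounded weight structure whose heart is identified with $\mathrm{proj}(GW(k))$, and then to deduce the result from Theorem \ref{thm:main2} by reducing to Theorem \ref{thm:main1} on categories built from the residue fields of $GW(k)$. To construct the weight structure $w$, I would use that $\mathrm{Boot}(k)$ is generated as a thick triangulated subcategory by the single object $\Sigma^\infty(\mathrm{Spec}(k)_+)$; the construction recalled in the introduction (from \cite[\S4.3]{Bondarko-Weight}) then reduces the problem to the vanishing
\[
\mathrm{Hom}_{\mathrm{Boot}(k)}(\Sigma^\infty(\mathrm{Spec}(k)_+), \Sigma^\infty(\mathrm{Spec}(k)_+)[n]) = 0 \quad \text{for all } n > 0,
\]
which follows from Morel's motivic connectivity theorem (the motivic sphere spectrum is $(-1)$-connected for the homotopy $t$-structure). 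The heart $\cH$ is then the Karoubi-closure, inside $\mathrm{Boot}(k)$, of the additive subcategory generated by the $\otimes$-unit, and the ring isomorphism \eqref{eq:ring-iso} identifies $\cH$ as a symmetric monoidal additive category with $\mathrm{proj}(GW(k))$, so that $\mathrm{Pic}(\cH) \simeq \mathrm{Pic}(GW(k))$.

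Next, I would apply Theorem \ref{thm:main2} with $R := GW(k)$. Hypothesis (A1) is immediate: $GW(k)$ is Noetherian by assumption (and one invokes Remark \ref{rk:maximal}(i) should $R$ fail to be indecomposable), while $\Hom$-modules between finitely generated projective $R$-modules are automatically flat. For hypothesis (A2), I would realize $\mathrm{Boot}(k)$ as $\perf(E)$ for a connective commutative symmetric ring spectrum $E$ with $\pi_0(E) \simeq GW(k)$, and set $\cT_{\kappa(\mathfrak{p})} := \perf(H\kappa(\mathfrak{p}))$, where $H\kappa(\mathfrak{p})$ is the Eilenberg-Mac Lane spectrum. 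The Postnikov truncation $E \to H\pi_0(E) = HGW(k)$, composed with the map $HGW(k) \to H\kappa(\mathfrak{p})$ induced by the quotient $GW(k) \twoheadrightarrow \kappa(\mathfrak{p})$, yields a map of commutative ring spectra $E \to H\kappa(\mathfrak{p})$; base change along it defines the weight-exact symmetric monoidal functor $\iota_{\kappa(\mathfrak{p})}$. Its restriction to hearts is $\mathrm{proj}(GW(k)) \to \mathrm{proj}(\kappa(\mathfrak{p}))$, $P \mapsto P \otimes_{GW(k)} \kappa(\mathfrak{p})$, whose Karoubi-closure exhausts $\mathrm{proj}(\kappa(\mathfrak{p}))$, as required.

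Finally, I would invoke Theorem \ref{thm:main1} on each $\cT_{\kappa(\mathfrak{p})} = \perf(H\kappa(\mathfrak{p}))$: its standard weight structure is bounded and its heart $\mathrm{proj}(\kappa(\mathfrak{p}))$ is already Karoubian, semi-simple, and local (the tensor product of two nonzero $\kappa(\mathfrak{p})$-vector spaces is nonzero), so each $\cT_{\kappa(\mathfrak{p})}$ satisfies the $w_{\kappa(\mathfrak{p})}$-Picard property. Theorem \ref{thm:main2} then promotes this to the $w$-Picard property for $\mathrm{Boot}(k)$, yielding the desired isomorphism $\mathrm{Pic}(\mathrm{Boot}(k)) \simeq \mathrm{Pic}(GW(k)) \times \bbZ$. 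The main obstacle is the model-categorical step of realizing $\mathrm{Boot}(k)$ as $\perf(E)$ for a suitable connective commutative symmetric ring spectrum $E$ and verifying that base change along $E \to H\kappa(\mathfrak{p})$ produces a symmetric monoidal, weight-exact functor with the prescribed behavior on hearts; once this enhancement is in place, the remainder is a direct application of the machinery of Theorems \ref{thm:main1}--\ref{thm:main2}.
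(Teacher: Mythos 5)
Your proposal reaches the right conclusion but by a genuinely different route from the paper's. The weight structure on $\mathrm{Boot}(k)$ with heart $\mathrm{Proj}(GW(k))$ is constructed in the same way in both (your appeal to Morel's connectivity theorem is the same vanishing the paper cites from \cite[Thm.~4.14]{ICM}, and \eqref{eq:ring-iso} identifies the heart). The divergence is in verifying (A2) of Theorem~\ref{thm:main2}. The paper stays inside motivic homotopy theory: it base-changes along the $\bbZ$-linearization $(-)_{\bbZ}\colon\mathrm{Boot}(k)\to\mathrm{Boot}(k;\bbZ)\subset{\bf DA}(k;\bbZ)$, uses the tensor dg enhancement of ${\bf DA}(k;\bbZ)$ together with Bachmann's \cite[Lem.~18]{bach} to obtain a \emph{symmetric monoidal} weight complex functor $t(-)\colon\mathrm{Boot}(k;\bbZ)\to K^b(\mathrm{Proj}(GW(k)))$, and only then tensors with $\kappa(\mathfrak{p})$ to land in $\cD^b(\kappa(\mathfrak{p}))$. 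You instead propose a Schwede--Shipley tilting equivalence $\mathrm{Boot}(k)\simeq\cD_c(E)$ where $E=\REnd_{\mathrm{SH}(k)}(\Sigma^\infty(\mathrm{Spec}(k)_+))$ is a connective commutative symmetric ring spectrum with $\pi_0(E)\simeq GW(k)$, and then apply the Postnikov truncation $E\to H\pi_0(E)$ and $-\otimes_{GW(k)}\kappa(\mathfrak{p})$; in effect this reduces Theorem~\ref{thm:computation-last} to a formal corollary of Theorem~\ref{thm:computation-10}. That is cleaner, at the price of the model-categorical step you flag: one must produce a strictly commutative symmetric ring spectrum model of $E$ and check that $\REnd(\mathbb{1},-)$ restricts to a strong (not merely lax) symmetric monoidal, weight-exact equivalence from the thick subcategory generated by the $\otimes$-unit to $\cD_c(E)$. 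The paper's detour through ${\bf DA}(k;\bbZ)$ sidesteps precisely this strictification: being $\bbZ$-linear, ${\bf DA}(k;\bbZ)$ is manifestly dg-enhanced, so the monoidality of the weight complex functor comes essentially for free. Also note that you need not invoke Remark~\ref{rk:maximal}(i): $GW(k)$ is indecomposable by \cite[Prop.~2.2]{Witt}, as the paper records.
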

\begin{remark}
The ring $GW(k)$ is Noetherian if and only if $k^\times/(k^\times)^2$ is finite.
\end{remark}
\begin{example}
Theorem \ref{thm:computation-last} holds, in particular, in the following cases:
\begin{itemize}
\item[(i)] The field $k$ is quadratically closed (\eg\ $k$ is algebraically closed or the field of constructible numbers). In this case, we have $GW(k)\simeq \bbZ$;
\item[(ii)] The field $k$ is the field of real numbers $\bbR$. In this case, we have $GW(\bbR)\simeq \bbZ[C_2]$, where $C_2$ stands for the cyclic group of order $2$;
\item[(iii)] The field $k$ is the finite field $\bbF_q$ with $q$ odd. In this case, $k^\times/(k^\times)^2=C_2$.
\end{itemize}
\end{example}
Theorem \ref{thm:computation-last} shows that, whenever $k=\overline{k}$ and $\mathrm{char}(k)=0$, none of the motivic spectra which are built using the non-trivial Ext-groups \eqref{eq:non-trivial-verylast} is $\otimes$-invertible!
\subsection{Noncommutative mixed Artin motives}
The category of {\em noncommutative mixed Artin motives $\mathrm{NMAM}(k; R)$} is defined as the thick triangulated subcategory of $\mathrm{KMM}(k; R)$ generated by the noncommutative motives $U(l)_R$ of finite separable field extensions $l/k$. The smallest additive, Karoubian, full subcategory of $\mathrm{KMM}(k;R)$ containing the objects $U(l)_R$ identifies with $\mathrm{AM}(k;R)$. 

The category of noncommutative mixed Artin motives is in general much richer than the category of mixed Artin motives. For example, whenever $R$ is a $\bbQ$-algebra, $\mathrm{DMA}(k;R)$ identifies with the category $\mathrm{Gr}_\bbZ \mathrm{AM}(k;R)$ of $\bbZ$-graded objects in $\mathrm{AM}(k;R)$; see \cite[Page~217]{Voevodsky}. This implies that $\mathrm{DMA}(k;R)$ has trivial higher Ext-groups. On the other hand, given any two finite separable field extensions $l_1/k$ and $l_2/k$, we have non-trivial negative Ext-groups (see \cite[\S4]{Hopf})
\begin{eqnarray}\label{eq:computation-1}
\Hom_{\mathrm{NMAM}(k;R)}(U(l_1)_R, U(l_2)_R[-n])\simeq K_n(l_1 \otimes_k l_2)_R && n >0\,,
\end{eqnarray}
where $K_n(l_1 \otimes_k l_2)$ stands for the $n^{\mathrm{th}}$ algebraic $K$-theory group of $l_1 \otimes_k l_2$. Roughly speaking, $\mathrm{NMAM}(k;R)$ contains not only $\mathrm{AM}(k;R)$ but also all the higher algebraic $K$-theory groups of finite separable field extensions. For example, given a number field $\bbF$, we have the following computation (due to Borel \cite[\S12]{Borel})
\begin{eqnarray*}
\Hom_{\mathrm{NMAM}(\bbQ; \bbQ)}(U(\bbQ)_\bbQ,U(\bbF)_\bbQ[-n])\simeq \left\{ \begin{array}{ll}
         \bbQ^{r_2}& n\equiv 3\,\,\, (\mathrm{mod}\,4) \\
         \bbQ^{r_1+r_2} & n\equiv 1\,\,\, (\mathrm{mod}\,4) \\
         0 & \mathrm{otherwise}\end{array} \right. && n \geq 2\,,
\end{eqnarray*}
where $r_1$ (resp. $r_2$) stands for the number of real (resp. complex) embeddings of $\bbF$.
\begin{theorem}\label{thm:computation-5}
When the degrees of the finite separable field extensions of $k$ are invertible in $R$, we have $\mathrm{Pic}(\mathrm{NMAM}(k;R))\simeq \mathrm{Pic}(\mathrm{AM}(k;R)) \times \bbZ$.
\end{theorem}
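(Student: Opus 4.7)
The plan is to exhibit a bounded weight structure on $\mathrm{NMAM}(k;R)$ whose heart is $\mathrm{AM}(k;R)$, then reduce via Theorem \ref{thm:main2} to the case of a field ring of coefficients, and finally conclude with Theorem \ref{thm:main1}. First, let $\cH'\subset \mathrm{NMAM}(k;R)$ denote the additive subcategory generated by the objects $U(l)_R$, as $l/k$ ranges over finite separable extensions. By construction $\cH'$ generates $\mathrm{NMAM}(k;R)$, so by the existence criterion recalled just after Theorem \ref{thm:main1} it suffices to check that $\Hom_{\cH'}(U(l_1)_R, U(l_2)_R[n]) = 0$ for every $n>0$. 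Applying \eqref{eq:computation-1} with $n$ replaced by $-n$, this group identifies with $K_{-n}(l_1 \otimes_k l_2)_R$. Since $l_1 \otimes_k l_2$ is a finite product of separable field extensions of $k$, hence a regular Noetherian ring, Bass' vanishing of negative $K$-theory for regular rings yields the required triviality. One therefore obtains a bounded weight structure $w$ on $\mathrm{NMAM}(k;R)$ whose heart is the Karoubi closure of $\cH'$, i.e. $\mathrm{AM}(k;R)$, and the symmetric monoidal structure restricts to this heart because $U(l_1)_R \otimes U(l_2)_R$ decomposes as a direct sum of $U(l')_R$'s, one for each factor of the étale algebra $l_1 \otimes_k l_2$.

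Next I would apply Theorem \ref{thm:main2} in order to reduce to the case in which the coefficient ring is a field. Assumption (A1) is immediate: $\mathrm{AM}(k;R)$ is essentially small and $R$-linear, and $\Hom_{\mathrm{AM}(k;R)}(U(l_1)_R, U(l_2)_R) \simeq K_0(l_1 \otimes_k l_2)_R$ is a finitely generated free $R$-module. For (A2), given $\mathfrak{p}\in \Spec(R)$, the change-of-coefficients symmetric monoidal functor $\iota_{\kappa(\mathfrak{p})}\colon \mathrm{NMAM}(k;R)\to \mathrm{NMAM}(k;\kappa(\mathfrak{p}))$ sends $U(l)_R$ to $U(l)_{\kappa(\mathfrak{p})}$, so it preserves the generators of the two hearts and is therefore weight-exact; moreover, exploiting the freeness of the $\Hom$ modules between generators, the induced functor realises the Karoubization of $\mathrm{AM}(k;R)\otimes_R \kappa(\mathfrak{p})$ as $\mathrm{AM}(k;\kappa(\mathfrak{p}))$.

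Thanks to this reduction, it remains to verify the hypotheses of Theorem \ref{thm:main1} when $R = F$ is a field in which the degrees of all finite separable extensions of $k$ are invertible. Boundedness is automatic from the construction. For semi-simplicity of $\mathrm{AM}(k;F)$ I would invoke the $F$-linearized Galois--Grothendieck correspondence, which identifies $\mathrm{AM}(k;F)$ with the category of continuous finite-dimensional $F$-linear representations of $\mathrm{Gal}(\overline{k}/k)$ factoring through finite quotients whose order is invertible in $F$, combined with Maschke's theorem. Locality reduces to the multiplicativity of $\dim_F$ under $\otimes$: if $V_1\otimes_F V_2 = 0$ then $(\dim_F V_1)(\dim_F V_2) = 0$, forcing one of the factors to vanish. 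Theorem \ref{thm:main1} then delivers the announced isomorphism. The main anticipated obstacle lies in the verification of (A2), specifically the precise identification of the Karoubization of $\mathrm{AM}(k;R)\otimes_R \kappa(\mathfrak{p})$ with $\mathrm{AM}(k;\kappa(\mathfrak{p}))$: essential surjectivity on generators is automatic, but full faithfulness and the correct behaviour of idempotents must be extracted from the freeness of the $K_0$-based $\Hom$ modules.
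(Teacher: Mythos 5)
Your proposal is correct and follows essentially the same route the paper takes (via its proof of Theorem \ref{thm:computation-6}, of which Theorem \ref{thm:computation-5} is the special case $\cA(k;R)=\mathrm{AM}(k;R)$): construct the bounded weight structure with heart $\mathrm{AM}(k;R)$ using negativity, verify (A1)--(A2) of Theorem \ref{thm:main2} with the change-of-coefficients functor, and conclude over each residue field via Theorem \ref{thm:main1} using the Galois--Grothendieck correspondence plus Maschke's theorem. The only cosmetic difference is that the paper cites the spectrum-level formula $\Hom(U(l_1)_R,U(l_2)_R[n])\simeq\pi_{-n}(K(l_1\otimes_k l_2)\wedge HR)$ from \cite[Prop.~4.4]{Hopf} to get vanishing of positive Ext-groups directly, rather than extrapolating \eqref{eq:computation-1} to negative $n$ and invoking Bass' vanishing of negative $K$-theory of regular rings as you do; both observations encode the same content.
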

\begin{example}
Theorem \ref{thm:computation-5} holds in the cases (i)-(iii) of Example \ref{ex:cases}. 
\end{example}
Theorem \ref{thm:computation-5} shows that although the category $\mathrm{NMAM}(k;R)$ is much richer than $\mathrm{DMA}(k;R)$, this richness is not detected by the Picard group. In particular, none of the noncommutative mixed motives which are built using the non-trivial negative Ext-groups \eqref{eq:computation-1} is $\otimes$-invertible! 
%

Let $\cA(k;R)$ be an additive, Karoubian, symmetric monoidal, full subcategory of $\mathrm{AM}(k;R)$, and $\mathrm{NM}\cA(k;R)$ the thick triangulated subcategory of $\mathrm{NMAM}(k;R)$ generated by the noncommutative motives associated to the objects of $\cA(k;R)$. Theorem \ref{thm:computation-5} admits the following generalization:
\begin{theorem}\label{thm:computation-6}
Assume that there exist finite separable field extensions $l_i/k$ as in Theorem \ref{thm:computation-2}. Under these assumptions, $\mathrm{Pic}(\mathrm{NM}\cA(k;R))\simeq \mathrm{Pic}(\cA(k;R)) \times \bbZ$.
\end{theorem}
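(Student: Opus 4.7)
The plan is to equip $\mathrm{NM}\cA(k;R)$ with a bounded symmetric monoidal weight structure whose heart is (the Karoubi closure of) $\cA(k;R)$, and then deduce the statement by combining Theorems \ref{thm:main2} and \ref{thm:main1}. The argument follows the same pattern as Theorem \ref{thm:computation-5}, but with $\cA(k;R)$ playing the role of $\mathrm{AM}(k;R)$ and the extensions $l_i/k$ from (B1) replacing all finite separable extensions.

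To construct the weight structure, I would first observe that for any two finite separable extensions $l,l'/k$, the $k$-algebra $l\otimes_k l'$ is étale, hence regular, so its negative algebraic $K$-theory vanishes. Combined with the identification \eqref{eq:computation-1}, this yields $\Hom_{\mathrm{NM}\cA(k;R)}(U(l_i)_R,U(l_j)_R[n])=0$ for every $n>0$. The additive subcategory of $\mathrm{NM}\cA(k;R)$ generated by the $U(l_i)_R$ thus satisfies the hypotheses of the generation criterion of \cite[\S4.3]{Bondarko-Weight} recalled after Theorem \ref{thm:main1}, and produces a unique bounded weight structure $w$ whose heart $\cH$ is the Karoubi-closure of this subcategory; by (B1), $\cH$ is identified with $\cA(k;R)$. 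The tensor product restricts to $\cH$ because $U(l_1)_R\otimes U(l_2)_R\simeq U(l_1\otimes_k l_2)_R$ decomposes as a finite sum of objects associated to finite separable extensions, and $\cA(k;R)$ is symmetric monoidal by assumption; the $\otimes$-unit $U(k)_R$ obviously lies in $\cH$.

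Next, I would apply Theorem \ref{thm:main2} by taking $\cT_{\kappa(\mathfrak{p})}:=\mathrm{NM}\cA(k;\kappa(\mathfrak{p}))$, equipped with the analogous weight structure $w_{\kappa(\mathfrak{p})}$, and letting $\iota_{\kappa(\mathfrak{p})}$ be the functor induced by the change of coefficients $-\otimes_R\kappa(\mathfrak{p})$ on the level of the underlying dg-categories. This functor is symmetric monoidal, sends $U(l)_R$ to $U(l)_{\kappa(\mathfrak{p})}$, and is therefore weight-exact and implements the identification on hearts required by (A2). For (A1) one uses that $\Hom_\cH(U(l_i)_R,U(l_j)_R)\simeq K_0(l_i\otimes_k l_j)_R$ is free of finite rank (its rank counts the connected components of $\Spec(l_i\otimes_k l_j)$), so all Hom-modules in $\cH$ are finitely generated projective, hence flat over $R$. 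It remains to verify the hypotheses of Theorem \ref{thm:main1} for each $\cT_{\kappa(\mathfrak{p})}$: boundedness is automatic by construction; by (B2) the degrees of the $l_i/k$ are invertible in $R$, hence in $\kappa(\mathfrak{p})$, so Maschke's theorem implies that the Karoubization of $\cH_{\kappa(\mathfrak{p})}$, viewed through the $\kappa(\mathfrak{p})$-linearized Galois--Grothendieck correspondence as a full subcategory of continuous $\kappa(\mathfrak{p})$-representations of $\mathrm{Gal}(\overline{k}/k)$, is semi-simple; locality follows because $\dim_{\kappa(\mathfrak{p})}$ is multiplicative on tensor products of representations. Combining Theorems \ref{thm:main1} and \ref{thm:main2} then yields $\mathrm{Pic}(\mathrm{NM}\cA(k;R))\simeq \mathrm{Pic}(\cH)\times\bbZ\simeq \mathrm{Pic}(\cA(k;R))\times\bbZ$.

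The main obstacle I anticipate is the verification of assumption (A2): one must check rigorously that the change-of-coefficients functor on noncommutative mixed motives is well-defined, symmetric monoidal and weight-exact, and, crucially, that its restriction to the hearts identifies the Karoubization of $\cA(k;R)\otimes_R\kappa(\mathfrak{p})$ with $\cA(k;\kappa(\mathfrak{p}))$. Once this compatibility with base change on the motivic side is in place, the remaining steps are essentially formal consequences of the weight-structure formalism and the Galois--Grothendieck description of Artin motives.
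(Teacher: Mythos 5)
Your proposal is correct and follows essentially the same route as the paper's: establish negativity of the additive subcategory generated by the $U(l_i)_R$ (using connectivity of $K(l_i\otimes_k l_j)\wedge HR$, which your regularity observation justifies), obtain a bounded weight structure with heart $\cA(k;R)$, verify (A1)–(A2) of Theorem \ref{thm:main2} with $\cT_{\kappa(\mathfrak p)}=\mathrm{NM}\cA(k;\kappa(\mathfrak p))$ and the base-change functor, and invoke Theorem \ref{thm:main1} over each residue field via Maschke. The only cosmetic difference is that the paper first equips $\mathrm{NMAM}(k;R)$ with the weight structure and restricts, whereas you apply the generation criterion directly to $\mathrm{NM}\cA(k;R)$; both reduce to the same negativity statement and produce the same heart.
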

\begin{example}[Noncommutative mixed Dirichlet motives]
Let $\cA(k;R)$ be the category of Dirichlet motives. In this case, the associated symmetric monoidal triangulated category $\mathrm{NM}\cA(k;R)$ is called the category of {\em noncommutative mixed Dirichlet motives}. Since the Picard group of $\cA(k;R)$ is isomorphic to $\mathrm{Pic}(\mathrm{AM}(k;R))$, we conclude from Theorem \ref{thm:computation-5} that $\mathrm{Pic}(\mathrm{NM}\cA(k;R))\simeq \mathrm{Pic}(\mathrm{AM}(k;R))\times \bbZ$. Roughly speaking, the difference between mixed Dirichlet motives (see Example \ref{ex:MDM}) and noncommutative mixed Dirichlet motives is not detected~by~the~Picard~group. 
%
%
%
\end{example}
\begin{example}[Bootstrap category]\label{ex:bootstrap} 
Let $\cA(k;R)$ be the smallest additive, Karoubian, full subcategory of $\mathrm{AM}(k; R)$ containing the $\otimes$-unit. In this case, the associated symmetric monoidal triangulated category $\mathrm{NM}\cA(k;R)$ is called the {\em bootstrap category}. Since $\cA(k;R)$ identifies with the category of finitely generated projective $R$-modules, we conclude from Theorem \ref{thm:computation-6} that $\mathrm{Pic}(\mathrm{NM}\cA(k;R))\simeq \mathrm{Pic}(R)\times \bbZ$. Similarly to Example \ref{ex:MixedTate}, we are not imposing the invertibility of any integer in $R$.

\end{example}

%

\subsection{Noncommutative mixed motives of central simple algebras}
Let us denote by $\mathrm{NMCSA}(k;R)$ the thick triangulated subcategory of $\mathrm{KMM}(k;R)$ generated by the noncommutative motives $U(A)_R$ of central simple $k$-algebras $A$. In the same vein, let $\mathrm{CSA}(k;R)$ be the smallest additive, Karoubian, full subcategory of $\mathrm{NMCSA}(k;R)$ containing the objects $U(A)_R$. As proved in \cite[Thm.~9.1]{Homogeneous}, given any two central simple $k$-algebras $A$ and $B$, we have the equivalence \begin{equation}\label{eq:equivalence}
U(A)_\bbZ \simeq U(B)_\bbZ \Leftrightarrow [A]=[B] \in \mathrm{Br}(k)\,,
\end{equation}
where $\mathrm{Br}(k)$ stands for the Brauer group of $k$. Intuitively speaking, \eqref{eq:equivalence} shows that the noncommutative motive $U(A)_\bbZ$ and the Brauer class $[A]$ contain exactly the same information. We have moreover non-trivial negative Ext-groups:
\begin{eqnarray}\label{eq:ext}
\Hom_{\mathrm{NMCSA}(k;\bbZ)}(U(A)_\bbZ, U(B)_\bbZ[-n])\simeq K_n(A^\op \otimes_k B) && n >0\,.
\end{eqnarray}
For example, when $n=1$ the right-hand side of \eqref{eq:ext} equals $D^\times/[D^\times, D^\times]$, where $D$ stands for the unique division $k$-algebra provided by the Wedderburn theorem $A^\op \otimes_k B \simeq M_{r\times r}(D)$. Roughly speaking, the triangulated category $\mathrm{NMCSA}(k;\bbZ)$ contains information not only about the Brauer group but also about all the higher algebraic $K$-theory of central simple algebras.
\begin{theorem}\label{thm:computation-7}
The following holds:
\begin{itemize}
\item[(i)] We have an isomorphism $\mathrm{Pic}(\mathrm{NMCSA}(k;R))\simeq \mathrm{Pic}(\mathrm{CSA}(k;R))\times \bbZ$;
\item[(ii)] We have an isomorphism $\mathrm{Pic}(\mathrm{CSA}(k;\bbZ))\simeq \mathrm{Br}(k)$.
\end{itemize}
\end{theorem}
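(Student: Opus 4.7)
For Part~(i), the plan is to equip $\mathrm{NMCSA}(k;R)$ with a bounded weight structure whose heart is (the Karoubi-closure of) $\mathrm{CSA}(k;R)$, and then apply Theorems~\ref{thm:main1}--\ref{thm:main2}. By the recipe recalled immediately after Theorem~\ref{thm:main1}, existence of such a weight structure reduces to the vanishing of the positive Ext-groups $\Hom_{\mathrm{KMM}(k;R)}(U(A)_R, U(B)_R[n])$ for $n>0$, which equal $K_{-n}(A^{\op} \otimes_k B)_R$ by the standard identification alluded to in~\eqref{eq:ext}. These vanish because $A^{\op} \otimes_k B$ is Morita equivalent to a division $k$-algebra, hence regular Noetherian, so its negative algebraic $K$-theory vanishes.

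Next I would verify the hypotheses of Theorem~\ref{thm:main2}. Condition~(A1) follows from the Morita-invariance computation $\Hom_{\mathrm{CSA}(k;R)}(U(A)_R, U(B)_R) \simeq K_0(A^{\op} \otimes_k B)_R \simeq R$, a free $R$-module of rank one. Condition~(A2) is supplied by the base-change-of-coefficients symmetric monoidal functors $\iota_{\kappa(\mathfrak{p})}\colon \mathrm{NMCSA}(k;R) \to \mathrm{NMCSA}(k;\kappa(\mathfrak{p}))$, which are weight-exact and induce the expected equivalence on hearts after Karoubization. This reduces the problem to the case where $R$ is a field $\kappa$, where I would invoke Theorem~\ref{thm:main1}: the weight structure is automatically bounded; the Karoubi-closure of $\mathrm{CSA}(k;\kappa)$ is semi-simple because every indecomposable object $U(A)_\kappa$ has endomorphism ring $\kappa$, a field; and locality holds because $U(A) \otimes U(B) \simeq U(A \otimes_k B)$ is always nonzero, the tensor product of central simple algebras being central simple. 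The subtlest point here is establishing semi-simplicity in residue characteristics dividing some index, where the composition of non-trivial Hom's between distinct $U(A)_\kappa$'s may vanish; I expect this to be handled by a careful analysis of the idempotent-completion combined with Remark~\ref{rk:maximal}(ii).

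For Part~(ii), the natural candidate is the map $\psi\colon \mathrm{Br}(k) \to \mathrm{Pic}(\mathrm{CSA}(k;\bbZ))$ sending $[A]$ to $U(A)_\bbZ$. It is well-defined and injective by~\eqref{eq:equivalence}; it lands in $\mathrm{Pic}$ since $U(A) \otimes U(A^{\op}) \simeq U(A \otimes_k A^{\op}) \simeq \mathbf{1}$ by Morita invariance of noncommutative motives; and it is a group homomorphism since $U(A \otimes_k B) \simeq U(A) \otimes U(B)$. The main obstacle is surjectivity, i.e., producing, for each invertible $M$ of $\mathrm{CSA}(k;\bbZ)$, a central simple $k$-algebra $A$ with $M \simeq U(A)_\bbZ$. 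My plan is to first observe that $M$ must be indecomposable, since $\mathbf{1}=U(k)$ is indecomposable ($\End(\mathbf{1})=\bbZ$ admits no non-trivial idempotents) and invertibility forces the same on $M$. I would then classify such indecomposable invertibles by writing $M$ as the image of an explicit idempotent on some $\bigoplus_i U(A_i)_\bbZ$ and analyzing it via the $\Hom$-description $K_0(A^{\op} \otimes_k B) \simeq \bbZ$ together with the composition law (given by tensor product of bimodules, whose $K_0$-class equals the relevant degree); the equivalence~\eqref{eq:equivalence} then identifies the only possible outcome as $M \simeq U(A)_\bbZ$.
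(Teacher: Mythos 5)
Your overall strategy for (i) matches the paper's: build a bounded weight structure on $\mathrm{NMCSA}(k;R)$ with heart $\mathrm{CSA}(k;R)$ via negativity, check (A1)--(A2) of Theorem~\ref{thm:main2} to reduce to the residue fields, and then invoke Theorem~\ref{thm:main1}. The first and second stages are fine (a minor imprecision: the positive Ext groups are $\pi_{-n}(K(A^{\op}\otimes_k B)\wedge HR)$, not literally $K_{-n}(A^{\op}\otimes_k B)_R$, though both vanish for $n>0$ since the spectrum is connective). But the last stage has a genuine gap: knowing that each $U(A)_\kappa$ has $\End=\kappa$ does \emph{not} by itself yield semi-simplicity of the Karoubian heart $\mathrm{CSA}(k;\kappa(\mathfrak{p}))$, nor does checking $U(A)\otimes U(B)\ne 0$ on generators give locality for arbitrary retracts of direct sums. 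What is needed, and what the paper supplies via Proposition~\ref{prop:coefficients}, is an explicit description of the heart over a residue field: it is $\mathrm{Vect}(\kappa(\mathfrak{p}))$ when $\mathrm{char}=0$, and $\mathrm{Gr}_{\mathrm{Br}(k)\{p\}}\mathrm{Vect}(\kappa(\mathfrak{p}))$ when $\mathrm{char}=p>0$, because $\Hom_{\mathrm{CSA}(k;\kappa(\mathfrak{p}))}(U(A),U(B))=\mathrm{ind}(A^{\op}\otimes_k B)\cdot\kappa(\mathfrak{p})$ is either $\kappa(\mathfrak{p})$ or $0$ depending on whether $p$ divides the index. That explicit identification is what makes semi-simplicity and locality transparent. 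Your appeal to Remark~\ref{rk:maximal}(ii) doesn't address this point; that remark concerns which prime ideals must be checked in (A2), not the structure of the heart.

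For (ii), the map $\psi$, its well-definedness, injectivity via~\eqref{eq:equivalence}, and that it is a homomorphism into $\mathrm{Pic}$ are all correct, and you offer a nice alternative route to indecomposability of an invertible $M$ (since $\End(M)\cong\End(\mathbf{1})=\bbZ$ has no non-trivial idempotents), where the paper instead observes directly that direct sums $U(A_1)_\bbZ\oplus\cdots\oplus U(A_m)_\bbZ$ with $m>1$ are not invertible. However, both routes still require knowing that the indecomposable objects of $\mathrm{CSA}(k;\bbZ)$ are precisely the $U(A)_\bbZ$'s, i.e., that passing to the Karoubi closure introduces no exotic summands. Your sketch defers this to ``a careful analysis of the idempotent-completion,'' but this is non-trivial -- the endomorphism ring of $\bigoplus_i U(A_i)_\bbZ$ is a non-unital-looking subring of a $\bbZ$-matrix ring (with the $(i,j)$ entry constrained to $\mathrm{ind}(A_i^{\op}\otimes_k A_j)\cdot\bbZ$) and might \emph{a priori} have idempotents not coming from subsets of indices. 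The paper resolves this by citing \cite[Thm.~2.19]{separable}, which establishes exactly this structural fact. Without that input (or a replacement for it), the surjectivity argument is incomplete.
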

\begin{remark}
As it will become clear from the proof of Theorem \ref{thm:computation-7}, we have also $\mathrm{Pic}(\mathrm{CSA}(k;R))\simeq \mathrm{Pic}(R)$ whenever $R$ is a $\bbQ$-algebra and $\mathrm{Pic}(\mathrm{CSA}(k;R))\simeq \mathrm{Br}(k)\{p\}$ whenever $R$ is a field of positive characteristic $p>0$.
\end{remark}
Theorem \ref{thm:computation-7} shows, in particular, that none of the noncommutative mixed motives which are built using the non-trivial negative Ext-groups \eqref{eq:ext} is $\otimes$-invertible!
\subsection{Noncommutative mixed motives of separable algebras}
Recall that a commutative $k$-algebra $A$ is separable if and only if it is isomorphic to a finite product of finite separable field extensions. Therefore, the (classical) category of Artin motives $\mathrm{AM}(k;R)$ can be alternatively defined as the smallest Karoubian full subcategory of $\mathrm{DM}_{\mathrm{gm}}(k;R)$, resp. $\mathrm{KMM}(k;R)$, containing the objects $M(\mathrm{Spec}(A))_R$, resp. $U(A)_R$, with $A$ a commutative separable $k$-algebra. In the setting of noncommutative motives, we can work more generally with separable algebras!

Let us denote by $\mathrm{NMSep}(k;R)$ the thick triangulated subcategory of $\mathrm{KMM}(k;R)$ generated by the noncommutative motives $U(A)_R$ of separable $k$-algebras $A$. In the same vein, let $\mathrm{Sep}(k;R)$ be the smallest additive, Karoubian, full subcategory of $\mathrm{NMSep}(k;R)$ containing the objects $U(A)_R$.
\begin{theorem}\label{thm:computation-8}
When the degrees of the finite separable field extensions of $k$ are invertible in $R$, we have $\mathrm{Pic}(\mathrm{NMSep}(k;R))\simeq \mathrm{Pic}(\mathrm{Sep}(k;R))\times \bbZ$.
\end{theorem}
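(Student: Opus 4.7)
The plan is to apply Theorem \ref{thm:main2} to reduce to the case where $R$ is a field, and then invoke Theorem \ref{thm:main1}. The strategy mirrors the proofs of Theorems \ref{thm:computation-5} and \ref{thm:computation-7}, adapted to the larger class of separable algebras (which subsumes both finite separable field extensions and central simple algebras).

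First, I would equip $\mathrm{NMSep}(k;R)$ with a bounded weight structure with heart $\mathrm{Sep}(k;R)$. For any two separable $k$-algebras $A, B$, the tensor product $A^{\mathrm{op}} \otimes_k B$ is again separable, hence semisimple, so its negative algebraic $K$-theory vanishes and
$$\Hom_{\KMM(k;R)}(U(A)_R, U(B)_R[n]) \simeq K_{-n}(A^{\mathrm{op}} \otimes_k B)_R = 0 \quad \text{for all } n>0.$$
The reconstruction principle of \cite[\S 4.3]{Bondarko-Weight} recalled after Theorem \ref{thm:main1} then produces the desired bounded weight structure. Essential smallness and $R$-linearity of the heart are automatic, and $\Hom_{\mathrm{Sep}(k;R)}(U(A)_R, U(B)_R) \simeq K_0(A^{\mathrm{op}} \otimes_k B)_R$ is a finitely generated free $R$-module (since $K_0$ of a finite product of matrix algebras over division algebras is a finitely generated free abelian group), which verifies (A1). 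For (A2), I would take $\cT_{\kappa(\mathfrak{p})} := \mathrm{NMSep}(k;\kappa(\mathfrak{p}))$ with the analogous weight structure and $\iota_{\kappa(\mathfrak{p})}$ the canonical base-change functor; symmetric monoidality and weight-exactness are built in, and the prescribed equivalence of Karoubized hearts follows from the fact that the $K_0$-formula commutes with $-\otimes_{\bbZ} \kappa(\mathfrak{p})$.

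By Theorem \ref{thm:main2} it then suffices to establish the $w_F$-Picard property for $F := \kappa(\mathfrak{p})$ any residue field of $R$, and the degree-invertibility hypothesis descends from $R$ to $F$. I would apply Theorem \ref{thm:main1} to $\mathrm{NMSep}(k;F)$: only the semi-simplicity and locality of $\mathrm{Sep}(k;F)$ remain to be checked. The structure theorem for separable algebras writes every generator of $\mathrm{Sep}(k;F)$ as a direct summand of some $U(D)_F$, with $D$ a central division algebra over a finite separable extension of $k$. After base-changing to a common finite Galois splitting field $L/k$ that trivialises all relevant Brauer classes and contains every center involved, Morita invariance of noncommutative motives turns the resulting category into one of Galois-equivariant finite-dimensional $F$-vector spaces; the degree-invertibility assumption together with Maschke's theorem makes this category semi-simple, while locality follows from the non-vanishing of the tensor product of nonzero finite-dimensional $F$-representations of a finite group.

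The principal obstacle is this last step: carrying out Galois descent while keeping track of the Brauer-class twists encoded in the division algebra components of separable algebras, and verifying that the Karoubization of $\mathrm{Sep}(k;F)$ introduces no unexpected zero-divisors under tensor product. Once this is verified, Theorem \ref{thm:main1} yields the $w_F$-Picard property and Theorem \ref{thm:main2} delivers the claimed isomorphism $\mathrm{Pic}(\mathrm{NMSep}(k;R)) \simeq \mathrm{Pic}(\mathrm{Sep}(k;R)) \times \bbZ$.
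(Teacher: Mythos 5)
Your overall strategy matches the paper's: equip $\mathrm{NMSep}(k;R)$ with a bounded weight structure with heart $\mathrm{Sep}(k;R)$ via vanishing of positive Ext-groups, verify assumptions (A1)--(A2) of Theorem~\ref{thm:main2} using the base-change functors $-\otimes_R\kappa(\mathfrak{p})$, and thereby reduce to proving the semi-simplicity and locality of $\mathrm{Sep}(k;F)$ for each residue field $F$. The verification of (A1) via $\Hom(U(A)_R,U(B)_R)\simeq K_0(A^{\op}\otimes_k B)_R$ being finitely generated free is also fine.

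The gap is in the step you yourself flag as the principal obstacle. Your plan to base-change to a common Galois splitting field $L/k$ and invoke Morita invariance to turn $\mathrm{Sep}(k;F)$ into a category of Galois-equivariant finite-dimensional $F$-vector spaces is incorrect whenever $\mathrm{char}(F)=p>0$. The index constraint on Hom-groups does not disappear under any Galois-descent reformulation: Proposition~\ref{prop:coefficients}(ii) already shows that even the full subcategory $\mathrm{CSA}(k;F)\subset\mathrm{Sep}(k;F)$ is $\otimes$-equivalent to $\mathrm{Gr}_{\mathrm{Br}(k)\{p\}}\mathrm{Vect}(F)$, which is not a category of finite-dimensional representations of a finite (or profinite) group. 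Distinct Brauer classes whose difference has index divisible by $p$ give rise to non-isomorphic objects, and no splitting field $L/k$ collapses this distinction in the category over $k$ --- what $L$ splits is $A\otimes_k L$ as an $L$-algebra, not the Hom-groups $K_0(A^{\op}\otimes_k B)\otimes_{\bbZ} F$ that define $\mathrm{Sep}(k;F)$. (The identification with a representation category is correct only over residue fields of characteristic $0$, where the degree-invertibility hypothesis forces $Z(-)$ to be an equivalence; see Remark~\ref{rk:last}.)

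The paper closes this gap differently. Using the explicit convolution-category models $\mathrm{Cov}(\Gamma;R)$ and $\mathrm{Cov}'(\Gamma;R)$ from \cite{separable}, one observes that the center functor $Z(-)\colon\mathrm{Sep}(k;R)\to\mathrm{AM}(k;R)$ corresponds to the forgetful functor $\mathrm{Cov}'(\Gamma;R)\to\mathrm{Cov}(\Gamma;R)$, which is faithful and conservative. Semi-simplicity and locality are then transported along $Z(-)$ from $\mathrm{AM}(k;F)\simeq\mathrm{Rep}_F(\Gamma)$ (where your Maschke-style argument does apply) to $\mathrm{Sep}(k;F)$. Without this convolution-category model and the conservativity of $Z(-)$, the needed semi-simplicity of $\mathrm{Sep}(k;F)$ is left unestablished in your proposal, so the appeal to Theorem~\ref{thm:main1} is not yet justified.
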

\begin{example}
Theorem \ref{thm:computation-8} holds in the cases (i)-(iii) of Example \ref{ex:cases}. 
\end{example}
As proved in \cite[Cor.~2.13]{separable}, the assignment $A \mapsto Z(A)$, where $Z(A)$ stands for the center of $A$, gives rise to an additive symmetric monoidal functor $Z(-)$ from $\mathrm{Sep}(k;R)$ to $\mathrm{AM}(k;R)$. Given an additive, Karoubian, symmetric monoidal, full subcategory $\cA(k;R) \subset \mathrm{AM}(k;R)$, we can then consider the associated fiber product
$$
\xymatrix{
\mathrm{S}\cA(k;R) \ar[d] \ar[r] \ar@{}|{\ulcorner}[dr]& \mathrm{Sep}(k;R) \ar[d]^-{Z(-)} \\
\cA(k;R) \ar[r] & \mathrm{AM}(k;R)\,.
}
$$
Let us denote by $\mathrm{NMS}\cA(k;R)$ the thick triangulated subcategory of $\mathrm{NMSep}(k;R)$ generated by the noncommutative motives associated to the objects of $\mathrm{S}\cA(k;R)$. Theorem \ref{thm:computation-8} admits the following generalization:
\begin{theorem}\label{thm:computation-9}
Assume that there exist finite separable field extensions $l_i/k$ as in Theorem \ref{thm:computation-2}. Under these assumptions, $\mathrm{Pic}(\mathrm{NMS}\cA(k;R))\simeq \mathrm{Pic}(\mathrm{S}\cA(k;R)) \times \bbZ$.
\end{theorem}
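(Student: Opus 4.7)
My plan is to mimic the strategy already used for Theorem \ref{thm:computation-8} (of which \ref{thm:computation-9} is the generalization), namely to put a bounded weight structure on $\mathrm{NMS}\cA(k;R)$ whose heart is (the Karoubi-closure of) $\mathrm{S}\cA(k;R)$, and then invoke Theorem \ref{thm:main2} to reduce to the case where $R$ is a field, where Theorem \ref{thm:main1} closes the argument. The three ingredients we must supply are: a weight structure, the base-change data (A1)--(A2), and the semi-simplicity/locality of the heart over a field.

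First I would construct the weight structure. By the discussion following Theorem \ref{thm:main1}, it suffices to show that $\mathrm{S}\cA(k;R)$ generates $\mathrm{NMS}\cA(k;R)$ (true by definition) and has no positive Ext-groups in $\mathrm{KMM}(k;R)$. Any object of $\mathrm{S}\cA(k;R)$ is a summand of some $U(A)_R$ with $A$ a separable $k$-algebra with $Z(A)$ in $\cA(k;R)$; and for two separable $k$-algebras $A,B$ one has
\[
\Hom_{\mathrm{KMM}(k;R)}(U(A)_R, U(B)_R[n]) \simeq K_{-n}(A^{\op}\otimes_k B)_R,
\]
which vanishes for $n>0$ because separable $k$-algebras have trivial negative $K$-theory. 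Hence $\mathrm{NMS}\cA(k;R)$ carries a unique bounded weight structure with heart the Karoubi-closure of $\mathrm{S}\cA(k;R)$, and its $\otimes$-structure restricts to the heart since $Z(A\otimes_k B)=Z(A)\otimes_k Z(B)$ lies in $\cA(k;R)$ whenever $Z(A),Z(B)$ do.

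Next I would verify (A1)--(A2) of Theorem \ref{thm:main2}. The category $\mathrm{S}\cA(k;R)$ is essentially small and $R$-linear, and $\Hom$-groups are of the form $K_0(A^{\op}\otimes_k B)_R$, which is a finitely generated flat (indeed projective) $R$-module because $K_0$ of a separable $k$-algebra is a finitely generated free abelian group; this gives (A1). For (A2) one takes $\cT_{\kappa(\mathfrak{p})}:=\mathrm{NMS}\cA(k;\kappa(\mathfrak{p}))$ equipped with the analogous weight structure and uses the standard base-change functor $-\otimes_R\kappa(\mathfrak{p})$; the fact that it induces an equivalence after Karoubization between $\mathrm{S}\cA(k;R)\otimes_R\kappa(\mathfrak{p})$ and $\mathrm{S}\cA(k;\kappa(\mathfrak{p}))$ follows exactly as in the proof of Theorem \ref{thm:computation-8}, because the $\Hom$ computation above is compatible with base change and the fiber product defining $\mathrm{S}\cA$ commutes with Karoubi closure.

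Finally, one is reduced to $R$ a field (a residue field of the original ring), and I would apply Theorem \ref{thm:main1} to $\mathrm{NMS}\cA(k;R)$. Boundedness was already established; semi-simplicity of the Karoubization of $\mathrm{S}\cA(k;R)$ follows from the semi-simplicity of $\mathrm{Sep}(k;R)$ under the invertibility hypothesis on degrees of finite separable extensions (established in \cite{separable} and used in Theorem \ref{thm:computation-8}), passed to the full subcategory $\mathrm{S}\cA(k;R)$. The delicate point, and what I expect to be the main obstacle, is \emph{locality}: if $U(A)_R\otimes U(B)_R\cong U(A\otimes_k B)_R$ vanishes in the Karoubization of $\mathrm{S}\cA(k;R)$, we must deduce that one of the two factors is already zero. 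Here I would argue via the dimension/rank homomorphism: on the semi-simple Karoubian heart, the class of $U(A)_R$ is determined by a collection of non-negative integers (the multiplicities of the simple summands appearing in $A$ after separating its center into a product of field extensions), and these multiplicities are multiplicative under $\otimes_k$; the invertibility of the separable degrees in $R$ guarantees that no such integer is killed, so a vanishing tensor product forces one factor to vanish. With locality in hand, Theorem \ref{thm:main1} yields $\mathrm{Pic}(\mathrm{NMS}\cA(k;R))\simeq \mathrm{Pic}(\mathrm{S}\cA(k;R))\times\bbZ$ over each residue field, and Theorem \ref{thm:main2} lifts the isomorphism to the original coefficient ring $R$.
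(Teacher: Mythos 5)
Your overall strategy matches the paper's: put a bounded weight structure on $\mathrm{NMS}\cA(k;R)$ with heart $\mathrm{S}\cA(k;R)$ via negativity, verify (A1)--(A2) of Theorem~\ref{thm:main2} to reduce to residue fields, and close with Theorem~\ref{thm:main1}. Your construction of the weight structure is fine. Where you diverge from the paper is in the verification of semi-simplicity and locality of $\mathrm{S}\cA(k;\kappa(\mathfrak{p}))$, and that is also where a real gap appears.

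The paper does not reason with $K$-theory Hom formulas directly; it replaces $\mathrm{AM}(k;R)$ and $\mathrm{Sep}(k;R)$ by the explicit combinatorial convolution categories $\mathrm{Cov}(\Gamma;R)$ and $\mathrm{Cov}'(\Gamma;R)$ (from \cite{separable}), whose Hom groups are certain $R$-submodules of $\Gamma$-invariant $R$-valued functions, with thresholds controlled by indices $\mathrm{ind}_{(s_1,s_2)}(A^{\op}\otimes_k B)$. The center functor $Z(-)\colon\mathrm{Sep}(k;R)\to\mathrm{AM}(k;R)$ is then recognized as the forgetful functor $\mathrm{Cov}'(\Gamma;R)\to\mathrm{Cov}(\Gamma;R)$, which is a bijection on objects and injective on morphisms; in particular $Z(-)$ is a faithful, conservative, symmetric monoidal additive functor. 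Locality of $\mathrm{S}\cA(k;\kappa(\mathfrak{p}))$ is then immediate: if $a\otimes b=0$ then $Z(a)\otimes Z(b)=0$ in the already-known-local category $\cA(k;\kappa(\mathfrak{p}))$, so $Z(a)=0$ or $Z(b)=0$, and conservativity finishes. Semi-simplicity is inherited by the same token. This is both cleaner and actually robust in positive residue characteristic.

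By contrast, your locality argument rests on the claim that ``the invertibility of the separable degrees in $R$ guarantees that no such integer is killed'', where the integers in question are the multiplicities/indices appearing in the decomposition of $U(A)_{\kappa(\mathfrak{p})}$. This does not follow: the indices of the Azumaya factors of a separable algebra $A$ are completely unconstrained by the degrees $[l_i:k]$ of the field extensions generating $\cA(k;R)$, and over $\kappa(\mathfrak{p})$ of characteristic $p>0$ those indices may well be divisible by $p$ (this is precisely what underlies Proposition~\ref{prop:coefficients}(ii), where the category degenerates to a $\mathrm{Br}(k)\{p\}$-graded one). So your argument as written would let some Hom groups --- hence some tensor products --- vanish, and multiplicativity of naive multiplicities does not preclude $S_i\otimes T_j=0$ for two nonzero simples $S_i,T_j$. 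You need either the explicit models from \cite{separable} or, more economically, to transport locality/semi-simplicity across $Z(-)$ as the paper does. Relatedly, your identification $\Hom_{\mathrm{KMM}(k;R)}(U(A)_R,U(B)_R)\simeq K_0(A^{\op}\otimes_k B)_R$ is imprecise: the correct identification in the heart (cf.\ equation~\eqref{eq:identifications}) is an $R$-submodule cut out by index conditions, not the full $K_0\otimes R$. The finitely-generated-flatness you need for (A1) still holds, but that should be argued from the convolution model rather than the naive $K_0$ formula.
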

\begin{remark}
\begin{itemize}
\item[(i)] The functor $Z(-)$ admits a section given by the inclusion of categories $\mathrm{AM}(k;R) \subset \mathrm{Sep}(k;R)$. This implies that $\mathrm{Pic}(\mathrm{S}\cA(k;R))$ decomposes into the product of $\mathrm{Pic}(\cA(k;R))$ with the kernel of $\mathrm{Pic}(\mathrm{S}\cA(k;R))\to \mathrm{Pic}(\cA;R)$;
\item[(ii)] When $\cA(k;R)$ is the smallest additive, Karoubian, full subcategory of $\mathrm{AM}(k;R)$ containing the $\otimes$-unit, $\mathrm{S}\cA(k;R)$ identifies with $\mathrm{CSA}(k;R)$, and consequently Theorem \ref{thm:computation-9} reduces to Theorem \ref{thm:computation-7}(i);
\item[(iii)] Whenever $R$ is a $\bbQ$-algebra, the functor $Z(-)$ is an equivalence of categories; see Remark \ref{rk:last}. Consequently, $\mathrm{S}\cA(k;R)$ identifies with $\cA(k;R)$, and Theorems \ref{thm:computation-8} and \ref{thm:computation-9} reduce to Theorems \ref{thm:computation-5} and \ref{thm:computation-6}, respectively.
\end{itemize}
\end{remark}
\subsection{A topological application}\label{sub:topological}
Let $E$ be a commutative symmetric ring spectrum and $\cD_c(E)$ the associated derived category of compact $E$-modules; see \cite{HSS,Spectra1}.
\begin{theorem}\label{thm:computation-10}
Assume that the ring spectrum $E$ is {\em connective}, \ie $\pi_n(E)=0$ for every $n<0$, and that $\pi_0(E)$ is an indecomposable Noetherian ring. Under these assumptions, we have $\mathrm{Pic}(\cD_c(E))\simeq \mathrm{Pic}(\pi_0(E))\times \bbZ$.
\end{theorem}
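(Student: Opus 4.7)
The plan is to equip $\cD_c(E)$ with a canonical bounded weight structure whose heart realizes $\mathrm{Pic}(\pi_0(E))$, and then deduce the Picard computation from Theorem~\ref{thm:main2}.

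First, I would construct the weight structure via the criterion of \cite[\S4.3]{Bondarko-Weight} recalled after Theorem~\ref{thm:main1}. Let $\cH'\subset \cD_c(E)$ denote the additive full subcategory on the finite coproducts $E^{\oplus n}$. Since compact $E$-modules are retracts of finite cell $E$-modules, $\cH'$ generates $\cD_c(E)$ as a thick triangulated subcategory. Connectivity of $E$ yields
$$\Hom_{\cD_c(E)}(E,E[n])\simeq \pi_{-n}(E)=0\qquad \text{for every } n>0,$$
so the criterion furnishes a unique bounded weight structure $w$ on $\cD_c(E)$ whose heart $\cH$ is the Karoubi-closure of $\cH'$. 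Passing to $\pi_0$ identifies $\cH$ with the category $\mathrm{Proj}^{fg}(\pi_0(E))$ of finitely generated projective $\pi_0(E)$-modules. The symmetric monoidal structure $\wedge^L_E$ (with unit $E$) restricts to $\cH$ because tensor products of finitely generated projective $\pi_0(E)$-modules are again of this form.

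Next, I would apply Theorem~\ref{thm:main2} with $R:=\pi_0(E)$, which is commutative, indecomposable, and Noetherian by hypothesis. Assumption~(A1) is immediate: $\cH\simeq \mathrm{Proj}^{fg}(R)$ is essentially small and $R$-linear, and Hom-sets between finitely generated projective $R$-modules are themselves finitely generated projective (hence flat) over $R$. For (A2), given $\mathfrak{p}\in \mathrm{Spec}(R)$, set $\cT_{\kappa(\mathfrak{p})}:=\cD_c(H\kappa(\mathfrak{p}))$, equipped with the analogous bounded weight structure $w_{\kappa(\mathfrak{p})}$ whose heart is the Karoubi-closure of $H\kappa(\mathfrak{p})$, namely $\mathrm{Vect}^{fd}_{\kappa(\mathfrak{p})}$. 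The base-change functor
$$\iota_{\kappa(\mathfrak{p})}:=(-)\wedge^L_E H\kappa(\mathfrak{p})\colon \cD_c(E)\longrightarrow \cD_c(H\kappa(\mathfrak{p}))$$
is symmetric monoidal; since it sends the generator $E$ into the heart of $w_{\kappa(\mathfrak{p})}$, a standard argument based on the uniqueness of bounded weight structures from their hearts shows that $\iota_{\kappa(\mathfrak{p})}$ is weight-exact. The induced functor on hearts is the flat base-change $\mathrm{Proj}^{fg}(R)\to \mathrm{Vect}^{fd}_{\kappa(\mathfrak{p})}$, $P\mapsto P\otimes_R\kappa(\mathfrak{p})$, which is fully faithful (because Homs between finitely generated projective modules are finitely generated projective, hence commute with $-\otimes_R \kappa(\mathfrak{p})$) and essentially surjective, yielding the equivalence required by~(A2).

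Finally, Theorem~\ref{thm:main1} applies to each $\cT_{\kappa(\mathfrak{p})}$: the heart $\mathrm{Vect}^{fd}_{\kappa(\mathfrak{p})}$ is already Karoubian, semi-simple, and local (since $\dim(V\otimes W)=\dim(V)\dim(W)$). Hence $\cT_{\kappa(\mathfrak{p})}$ satisfies the $w_{\kappa(\mathfrak{p})}$-Picard property. Theorem~\ref{thm:main2} then transfers this to $\cD_c(E)$, giving
$$\mathrm{Pic}(\cD_c(E))\simeq \mathrm{Pic}(\cH)\times \bbZ\simeq \mathrm{Pic}(\mathrm{Proj}^{fg}(\pi_0(E)))\times \bbZ\simeq \mathrm{Pic}(\pi_0(E))\times \bbZ.$$
The principal obstacle is the careful verification that the weight structure produced by the formal criterion is genuinely preserved under base change along $E\to H\kappa(\mathfrak{p})$, \emph{i.e.}\ the weight-exactness of $\iota_{\kappa(\mathfrak{p})}$; beyond this, the argument is a direct unwinding of the general machinery.
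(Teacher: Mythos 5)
Your proposal follows essentially the same route as the paper: the paper also constructs a bounded weight structure with heart the finitely generated projective $\pi_0(E)$-modules, sets $R:=\pi_0(E)$, takes the target categories to be $\cD^b(\kappa(\mathfrak p))$ (canonically equivalent to your $\cD_c(H\kappa(\mathfrak p))$), uses the same composed base-change functor, and appeals to Remark~\ref{rk:weight-exact} for weight-exactness. One small wording slip to fix: the functor on hearts $\mathrm{Proj}^{fg}(R)\to\mathrm{Vect}^{fd}_{\kappa(\mathfrak p)}$ is \emph{not} fully faithful (already $R=\bbZ$, $\kappa(\mathfrak p)=\bbF_p$ fails); what your parenthetical actually establishes, and what (A2) requires, is that $\cH\otimes_R\kappa(\mathfrak p)\to\mathrm{Vect}^{fd}_{\kappa(\mathfrak p)}$ is fully faithful and induces an equivalence after Karoubization.
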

\begin{example}[Finite spectra]
Let $E$ be the sphere spectrum $\bbS$. In this case, the category $\cD_c(\bbS)$ is equivalent to the homotopy category of finite spectra $\cS\cH_c$ and $\pi_0(\bbS)\simeq \bbZ$. Consequently, we obtain $\mathrm{Pic}(\cS\cH_c)\simeq \bbZ$. This computation was originally established by Hopkins-Mahowald-Sadofsky in \cite{HMS} using different tools. Note that this computation may be understood as a particular case of Theorem~\ref{thm:computation-last}.
\end{example}
\begin{example}[Ordinary rings]
Let $E$ be the Eilenberg-MacLane spectrum $HR$ of a commutative indecomposable Noetherian ring $R$. In this case, $\cD_c(HR)\simeq \cD_c(R)$ and $\pi_0(HR)\simeq R$. Consequently, we obtain $\mathrm{Pic}(\cD_c(R))\simeq \mathrm{Pic}(R) \times \bbZ$; consult Remark \ref{rcoeff}(i) for the case where $R$ is decomposable. This computation was originally established in \cite{Fausk}. Although Fausk did not use weight structures, one observes that by applying our arguments (see \S\ref{sec:proof2}) to the triangulated category $D_c(R)$, equipped with the weight structure whose heart consists of the finitely generated projective $R$-modules, one obtains a reasoning somewhat similar to his~one.

\end{example}

\section{Weight structures}\label{sec:weight}
In  this section we briefly review the theory of weight structures. This will give us the opportunity to fix some notations that will be used throughout the article.

\begin{definition}{(see \cite[Def.~1.1.1]{Bondarko-Weight})}\label{dws}
A {\em weight structure $w$} on a triangulated category $\cT$, also known as a {\em co-$t$-structure} in the sense of Pauksztello \cite{Pauksztello}, consists of a pair of additive subcategories $(\cT^{w\geq 0}, \cT^{w\leq 0})$ satisfying the following conditions\footnote{Following \cite{Bondarko-Weight}, we will use the so-called {\em cohomological convention} for weight structures. This differs from the homological convention used in \cite{Bondarko-Killing,Bondarko-Gersten,bscwh}.}:
\begin{itemize}
\item[(i)] The categories $\cT^{w\geq 0}$ and $\cT^{w\leq 0}$ are 
Karoubi-closed in $\cT$;
\item[(ii)] We have inclusions of categories $\cT^{w\geq 0}\subset \cT^{w\geq 0}[1]$ and $\cT^{w\leq 0}[1] \subset \cT^{w \leq 0}$;
\item[(iii)] For every $a \in \cT^{w \geq 0}$ and $b \in \cT^{w\leq 0}[1]$, we have $\Hom_\cT(a,b)=0$;
\item[(iv)] For every $a \in \cT$ there exists a distinguished triangle $c[-1]\to a \to b \to c$ in $\cT$ with $b \in \cT^{w\leq 0}$ and $c \in \cT^{w\geq 0}$.
\end{itemize}
\end{definition}
Given an integer $n \in \bbZ$, let $\cT^{w\geq n}:=\cT^{w \geq 0}[-n]$, $\cT^{w\leq n}:=\cT^{w\leq 0}[-n]$, and $\cT^{w =n}:=\cT^{w \geq n} \cap \cT^{w\leq n}$. The objects belonging to $\cup_{n \in \bbZ} \cT^{w=n}$ are called {\em $w$-pure} and the additive subcategory $\cH:=\cT^{w=0}$ is called the {\em heart} of the weight structure. Finally, a weight structure $w$ is called {\em bounded} if $\cT=\cup_{n \in \bbZ}\cT^{w \geq n}= \cup_{n \in \bbZ} \cT^{w\leq n}$.
\medskip
\newline
{\bf Assumption:} Let $(\cT,\otimes, {\bf 1})$ be a symmetric monoidal triangulated category equipped with a weight structure $w$. Throughout the article, we will always assume that the symmetric monoidal structure is {\em $w$-pure} in the sense that the tensor product $-\otimes -$ (as well as the $\otimes$-unit ${\bf 1}$) restricts to the heart $\cH$.

\begin{remark}[Self-duality]\label{rwd} 
The notion of weight structure is (categorically) self-dual. Given a triangulated category $\cT$ equipped with a weight structure $w$, the opposite triangulated category $\cT^\op$ inherits the opposite weight structure $w^\op$ with $(\cT^\op)^{w^\op \leq 0}:= \cT^{w \geq 0}$ and $(\cT^\op)^{w^\op \geq 0}:= \cT^{w \leq 0}$.
%
\end{remark}

\begin{definition}\label{def:weight-exact}
An exact functor $F\colon \cT\to \cT'$ between triangulated categories equipped with weight structures $w$ and $w'$, respectively, is called  {\em weight-exact}
if $F(\cT^{w\le 0})\subseteq  \cT'{}^{w'\le 0}$ and $F(\cT^{w\ge 0})\subseteq  \cT'{}^{w'\ge 0}$. 
\end{definition}
\begin{remark}\label{rk:weight-exact}
Whenever the weight structure $w$ is bounded, the functor $F\colon \cT \to \cT'$ is weight-exact if and only if $F(\cT^{w=0})\subseteq \cT^{w'=0}$; see \cite[Prop.~1.2.3(5)]{bscwh}.
\end{remark}

\subsection{Weight complexes}
Let $\cT$ be a triangulated category equipped with a weight structure $w$. Following \cite[Def.~2.2.1]{Bondarko-Weight}, we can assign to every object $a\in \cT$ a certain (cochain) weight $\cH$-complex $t(a): \cdots \to a^{m-1} \to a^m \to a^{m+1} \to \cdots$. For example, if $a \in \cT^{w=m}$, then we can take for $t(a)$ the complex $\cdots \to 0 \to a \to 0 \to \cdots$ supported in degree $m$. As explained in {\em loc. cit.}, the assignment $a \mapsto t(a)$ is well-defined only up to homotopy equivalence. Nevertheless, we have the result:
\begin{proposition}{(see \cite[Cor. 2.3.4]{Bondarko-Gersten})}\label{prop:homological}
Given an additive functor $G\colon \cH \to \mathrm{A}$, with values in an abelian category, the assignment $a \mapsto H^0(G(t(a)))$ yields a well-defined (\ie independent of the choice of $t(a)$) homological functor $\mathrm{H}_0\colon \cT \to \mathrm{A}$. Moreover, the assignment $G\mapsto \mathrm{H}_0$ is natural in the functor $G$.
\end{proposition}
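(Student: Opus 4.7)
The plan is to reduce the proposition to three properties of the weight complex construction established in \cite{Bondarko-Weight}:
(i) to each $a \in \cT$ one associates a complex $t(a) \in C(\cH)$ that is well-defined up to chain homotopy equivalence;
(ii) any morphism $f \colon a \to b$ in $\cT$ lifts to a chain map $t(f) \colon t(a) \to t(b)$, again well-defined up to homotopy; and
(iii) any distinguished triangle $a \to b \to c \to a[1]$ in $\cT$ can be realized by a termwise split short exact sequence $0 \to t(a) \to t(b) \to t(c) \to 0$ in $C(\cH)$. With these three inputs at hand, the remaining work is essentially formal.

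For well-definedness and functoriality, I would use that any additive functor $G \colon \cH \to \mathrm{A}$ extends termwise to $C(\cH) \to C(\mathrm{A})$ and preserves chain homotopies, hence descends to a functor between the homotopy categories. Composing with $H^0 \colon K(\mathrm{A}) \to \mathrm{A}$, properties (i) and (ii) then immediately yield a well-defined $\mathrm{H}_0 \colon \cT \to \mathrm{A}$ on both objects and morphisms. For the homological property, apply the termwise extension of $G$ to the sequence from (iii); since additive functors preserve termwise splittings, one obtains a termwise split short exact sequence $0 \to G(t(a)) \to G(t(b)) \to G(t(c)) \to 0$ in $C(\mathrm{A})$, and the associated long exact sequence in cohomology gives exactness of $\mathrm{H}_0(a) \to \mathrm{H}_0(b) \to \mathrm{H}_0(c)$. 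Applying the same argument to the rotated triangles furnishes the full long exact sequence that characterizes a homological functor. Naturality in $G$ is then automatic: a natural transformation $\eta \colon G \Rightarrow G'$ produces compatible chain maps $G(t(a)) \to G'(t(a))$, and $H^0$ converts these into a natural transformation $\mathrm{H}_0 \Rightarrow \mathrm{H}_0'$.

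The main obstacle is fact (iii). Properties (i) and (ii) follow fairly directly from Definition \ref{dws} by iterated use of the weight decompositions of axiom (iv), together with the orthogonality axiom (iii) of that definition to control the ambiguity up to homotopy. Fact (iii), by contrast, requires the more delicate argument that the weight decompositions along the three vertices of a distinguished triangle can be chosen compatibly, producing a $3 \times 3$ diagram of distinguished triangles in $\cT$ whose rows become termwise split short exact sequences in $\cH$. This is the technical core of the theory, carried out in \cite{Bondarko-Weight} and refined in \cite{Bondarko-Gersten}; once (iii) is available the proposition reduces to the standard homological algebra of termwise split sequences and additive functors sketched above.
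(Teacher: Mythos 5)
Your reduction of the proposition to the three structural facts (i)--(iii) about weight complexes is exactly the right approach and matches the strategy of the cited result. One assertion is too strong as written, though, and the imprecision hides the real point. In point (ii) you claim that the lift $t(f)$ of a morphism $f\colon a\to b$ is well-defined up to chain homotopy. In Bondarko's theory this is false in general: the weight complex of a morphism is only well-defined up to \emph{weak} homotopy, where $f_1,f_2\colon C^\bullet\to D^\bullet$ are declared weakly homotopic when $f_1-f_2=d_D\circ s+j\circ d_C$ for possibly \emph{distinct} degree $-1$ maps $s$ and $j$. This relation is strictly coarser than honest chain homotopy (which would require $s=j$), and it is precisely why the weight complex is only a functor $\cT\to K_w(\cH)$ into the weak homotopy category rather than into $K(\cH)$. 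Your argument still goes through, but for a reason you do not make explicit: the composite $H^n\circ G_*$ does factor through $K_w$. Indeed, if $f_1-f_2=d_D\,s+j\,d_C$, then after applying $G$ termwise one has $G(f_1)-G(f_2)=d_{G(D)}\,G(s)+G(j)\,d_{G(C)}$, so on a cocycle $z$ the difference $(G(f_1)-G(f_2))(z)=d_{G(D)}(G(s)(z))$ is a coboundary, giving $H^n(G(f_1))=H^n(G(f_2))$. This one-line observation is the crux of why Proposition~\ref{prop:homological} is non-vacuous, and a careful write-up should state (ii) at the level of weak homotopy and then include it. The remainder of your sketch --- termwise extension of $G$, preservation of termwise split short exact sequences by additive functors, the $3\times3$ compatible-weight-decomposition argument underlying (iii), and the naturality in $G$ --- is correct and in line with the standard proof.
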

We denote by $\mathrm{H}_n$ the precomposition of $\mathrm{H}_0$ with the $n^{\mathrm{th}}$ suspension functor of~$\cT$.
\begin{remark}\label{rk:one-way}
Note that if $a \in \cT^{w=m}$, then $\mathrm{H}_n(a)=0$ for every $n \neq m$.
\end{remark}
Let $F\colon \cT \to \cT'$ be a weight-exact functor as in Definition \ref{def:weight-exact}. If $t(a)$ is a weight $\cH$-complex for $a$, then $F(t(a))$ is a weight $\cH'$-complex for $F(a)$.
\subsection{Karoubization}\label{sub:Kar}
Given a category $\cC$, let us write $\mathrm{Kar}(\cC)$ for its Karoubization. Recall that the objects of $\mathrm{Kar}(\cC)$ are the pairs $(a,e)$, with $a \in \cC$ and $e$ an idempotent of the ring of endomorphisms $\Hom_\cC(a,a)$. The morphisms are given by $\Hom_{\mathrm{Kar}(\cC)}((a,e),(b,e')):=e \circ \Hom_\cC(a,b) \circ e'$. By construction, $\mathrm{Kar}(\cC)$ comes equipped with the canonical functor $\cC \to \mathrm{Kar}(\cC), a \mapsto (a,\id)$. Whenever $\cC$ is symmetric monoidal, resp. triangulated, the category $\mathrm{Kar}(\cC)$ is also symmetric monoidal, resp. triangulated (see \cite[Thm.~1.5]{bashli}). Moreover, the canonical functor $\cC \to \mathrm{Kar}(\cC)$ becomes symmetric monoidal, resp. exact. 

Let $\cT$ be a triangulated category equipped with a bounded weight structure $w$. The Karoubization of $\cT^{w \geq 0}$ and $\cT^{w \leq 0}$ inside $\mathrm{Kar}(\cT)$ equip the latter category with a bounded weight structure, making the canonical functor $\cT \to \mathrm{Kar}(\cT)$ weight-exact; consult \cite[Prop. 3.2.1 and Rk. 3.2.2(2)]{Bondarko-Killing} for details.
\section{Proof of Theorem \ref{thm:main1}}
We start with the following auxiliary result:
\begin{proposition}\label{prop:auxiliar}
A symmetric monoidal triangulated category $(\cT, \otimes, {\bf 1})$, equipped with a weight structure $w$, has the $w$-Picard property (see \S\ref{sec:intro}) if and only if all its $\otimes$-invertible objects are $w$-pure.
\end{proposition}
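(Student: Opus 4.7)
The plan is to analyse directly the group homomorphism
$\phi \colon \mathrm{Pic}(\cH) \times \bbZ \to \mathrm{Pic}(\cT)$, $(a,n) \mapsto a[n]$,
which is well-defined because the symmetric monoidal structure is assumed $w$-pure (so $\cH$ is itself symmetric monoidal with unit $\mathbf{1}$, and $\mathrm{Pic}(\cH)$ is a bona fide group). The ``only if'' direction is then essentially trivial: if $\phi$ is surjective, every invertible $c\in\mathrm{Pic}(\cT)$ equals $a[n]$ for some $a\in\mathrm{Pic}(\cH)\subset\cT^{w=0}$, hence $c\in\cT^{w=-n}$ is $w$-pure.

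For the ``if'' direction I first record a uniqueness-of-weight lemma: a nonzero $w$-pure object lies in exactly one $\cT^{w=n}$. Indeed, if $x\in\cT^{w=n}\cap\cT^{w=m}$ with $n<m$, then $x\in\cT^{w\geq m}$ while $x\in\cT^{w\leq n}\subseteq\cT^{w\leq m-1}$; the shifted form of axiom (iii) in Definition~\ref{dws} then yields $\Hom_\cT(x,x)=0$, forcing $x=0$. Injectivity of $\phi$ is immediate from this: from an isomorphism $a[n]\simeq a'[n']$ with $a,a'\in\mathrm{Pic}(\cH)$ nonzero, uniqueness of weight forces $n=n'$; desuspending yields $a\simeq a'$ in $\cT$, and hence in $\cH$ since the heart is a full subcategory.

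For surjectivity, let $c\in\mathrm{Pic}(\cT)$. By hypothesis $c$ is $w$-pure, so $c\in\cT^{w=m}$ for some uniquely determined $m$, and $a:=c[m]\in\cH$ satisfies $c=a[-m]$. It remains to check that $a$ is $\otimes$-invertible \emph{inside} $\cH$. Pick an inverse $c'$ of $c$ in $\cT$; as $c'$ is itself invertible, it is $w$-pure, so $c'=b[-k]$ with $b\in\cH$. The $w$-purity of $-\otimes-$ places $a\otimes b$ back in $\cH$, and one computes $\mathbf{1}\simeq c\otimes c'\simeq (a\otimes b)[-(m+k)]$. Applying the uniqueness-of-weight lemma to $\mathbf{1}\in\cT^{w=0}$ forces $m+k=0$ together with an isomorphism $a\otimes b\simeq\mathbf{1}$ in $\cT$; fullness of $\cH$ promotes the latter to an isomorphism in $\cH$. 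Hence $a\in\mathrm{Pic}(\cH)$ and $c=\phi(a,-m)$, completing surjectivity.

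Everything hinges on the uniqueness-of-weight lemma; once that is in hand the argument is bookkeeping with the shift conventions of Definition~\ref{dws} plus the fact that $\cH$ is a full, $\otimes$-closed subcategory of $\cT$. I do not expect any serious obstacle---the statement is really the cleanest possible reformulation of the $w$-Picard property, and the only delicate point is making sure one uses the full strength of axiom (iii), namely the shifted orthogonality $\Hom_\cT(\cT^{w\geq s},\cT^{w\leq s-1})=0$.
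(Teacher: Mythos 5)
Your proof is correct and follows essentially the same route as the paper: establish injectivity of $(a,n)\mapsto a[n]$ from the orthogonality axiom (iii), then observe that the $w$-Picard property reduces to surjectivity, which is the $w$-purity condition. You are slightly more careful than the paper's terse proof in two places---you isolate the uniqueness-of-weight lemma explicitly, and you verify in the ``if'' direction that the $\cH$-object $a$ obtained from a pure invertible $c$ is actually invertible \emph{in} $\cH$ (the paper relegates this to the remark immediately following the proposition)---but the underlying argument is the same.
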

\begin{proof}
Let $(a,n), (b,m) \in \mathrm{Pic}(\cH) \times \bbZ$. On the one hand, when $n=m$, we have $a[n]\simeq b[m]$ in $\cT$ if and only if $a \simeq b$ in $\cH$. This follows from the fact that the suspension functor is an equivalence of $\cT$. On the other hand, when $n\neq m$, we have $a[n]\not\simeq  b[m]$ in $\cT$. This follows from the fact that $\Hom_\cT(a[n],b[m])$, resp. $\Hom_\cT(b[m],a[n])$, is zero whenever $m <n$, resp. $n <m$; see Definition \ref{dws}(iii). This implies that the canonical group homomorphism
\begin{eqnarray}\label{eq:canonical}
\mathrm{Pic}(\cH) \times \bbZ \too \mathrm{Pic}(\cT) && (a,n) \mapsto a[n]
\end{eqnarray} 
is injective. Consequently, we conclude that the category $\cT$ has the $w$-Picard property if and only if \eqref{eq:canonical} is surjective. In other words, $\cT$ has the $w$-Picard property if and only if all its $\otimes$-invertible objects are $w$-pure.
\end{proof}
\begin{remark}
Let $(\cT, \otimes, {\bf 1})$ be a symmetric monoidal triangulated category equipped with a weight structure $w$. The arguments used in the proof of  Proposition \ref{prop:auxiliar} allow us to conclude that if by hypothesis $a[n]\otimes b[m]\simeq {\bf 1}$ for certain objects $a, b \in \cH$ and integers $n, m \in \bbZ$, then $n=-m$ and $a$ is the $\otimes$-inverse of $b$.
\end{remark}
Let us now prove Theorem \ref{thm:main1}. Following \S\ref{sub:Kar}, we can assume without loss of generality that the categories $\cT$ and $\cH$ are Karoubian. Let $b \in \cT$ be a (fixed) $\otimes$-invertible object. Thanks to Proposition \ref{prop:auxiliar}, it suffices to prove that $b$ is $w$-pure. 

As proved in \cite[A.2.10]{AK}, every Karoubian semi-simple category is abelian. Therefore, by applying Proposition \ref{prop:homological} to the identity functor $G=\Id\colon \cH \to \cH$, we obtain well-defined homological functors $\mathrm{H}_n\colon \cT \to \cH, n \in \bbZ$.

Consider the homological functor $\cT \to \cH, a \mapsto  \mathrm{H}_0(a\otimes b)$. Since by assumption the weight structure $w$ is bounded, \cite[Thm.~2.3.2]{Bondarko-Weight} applied to the preceding homological functor yields a convergent K\"unneth spectral sequence
\begin{equation}\label{eq:spectral}
E_1^{pq} = \mathrm{H}_q(a^p \otimes b) \Rightarrow \mathrm{H}_{p+q}(a \otimes b)\,.
\end{equation}
Using the fact that $a^p \in \cH$ and that $a^p \otimes t(b)$ is a weight $\cH$-complex for $a^p \otimes b$, we observe that $\mathrm{H}_q(a^p\otimes b)\simeq a^p \otimes \mathrm{H}_q(b)$. Consequently, the semi-simplicity of the heart $\cH$ implies that\footnote{Whenever the spectral sequence \eqref{eq:spectral} degenerates at the second page, we obtain an induced K\"unneth formula $ \mathrm{H}_n(a\otimes b) \simeq \oplus_{p+q =n} \mathrm{H}_p(a) \otimes \mathrm{H}_q(b)$.} $E_2^{pq}\simeq\mathrm{H}_p(a) \otimes \mathrm{H}_q(b)$. Let us denote by $m_a$, resp. $m'_a$, the smallest, resp. largest, integer such that $\mathrm{H}_n(a)=0$ for every $n < m_a$, resp. $n> m'_a$; the existence of such integers follows from the fact that the weight structure $w$ is bounded. Similarly, let $m_b$, resp. $m'_b$, be the smallest, resp. largest, integer such that $\mathrm{H}_n(b)=0$ for every $n<m_b$, resp. $n > m'_b$. Since by assumption the category $\cH$ is local, we have $\mathrm{H}_{m_a}(a) \otimes \mathrm{H}_{m_b}(b)\neq0$ and $\mathrm{H}_{m'_a}(a) \otimes \mathrm{H}_{m'_b}(b)\neq 0$. Using the second page of the spectral sequence \eqref{eq:spectral}, we conclude that 
\begin{eqnarray}\label{eq:non-zero}
\mathrm{H}_{m_a + m_b}(a\otimes b)\neq 0 & \mathrm{and} & \mathrm{H}_{m'_a + m'_b}(a\otimes b)\neq 0\,.
\end{eqnarray}
Now, assume that $b$ is $\otimes$-invertible. By definition, $a\otimes b \simeq {\bf 1}$ for some ($\otimes$-invertible) object $a \in \cT$. Since $\mathrm{H}_n(a\otimes b)\simeq \mathrm{H}_n({\bf 1})=0$ for every $n \neq 0$, we conclude from \eqref{eq:non-zero} that $m_b=m'_b$, $m_a =m'_a$, and $m_a=-m_b$. Thanks to Proposition \ref{prop:conservativity-1}, this implies that $b \in \cT^{w=m_b}$. In particular, the object $b$ is $w$-pure, and so the proof is finished.
\begin{proposition}{(Conservativity I)}\label{prop:conservativity-1}
Let $\cT$ be a triangulated category equipped with a bounded weight structure $w$ whose heart $\cH$ is abelian semi-simple. Consider the associated homological functors $\mathrm{H}_n\colon \cT \to \cH, n \in \bbZ$. Under these assumptions, an object $b\in \cT$ belongs to $\cT^{w=m}$ if and only if $\mathrm{H}_n(b)=0$ for every $n\neq m$.
\end{proposition}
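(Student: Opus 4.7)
The ``only if'' direction is Remark \ref{rk:one-way}. For the converse, suppose $\mathrm{H}_n(b)=0$ for every $n\neq m$. After a shift of $b$, which shifts the indices of both $\mathrm{H}_n$ and the weight degrees by the same amount, we may assume $m=0$. By boundedness of $w$ there exist integers $n_1\le 0\le n_2$ with $b\in \cT^{w\ge n_1}\cap \cT^{w\le n_2}$. The plan is to prove by descending induction on $n_2$ (and symmetrically by ascending induction on $n_1$) that one may reduce to $n_1=n_2=0$, which by definition places $b$ in $\cT^{w=0}$.

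For the inductive step, assume $n_2\ge 1$ and choose a weight decomposition triangle
\[
b_{\le n_2-1}\longrightarrow b \longrightarrow b_{\ge n_2}\longrightarrow b_{\le n_2-1}[1]
\]
at level $n_2-1$, with $b_{\le n_2-1}\in \cT^{w\le n_2-1}$ and $b_{\ge n_2}\in \cT^{w\ge n_2}$. Because $\cT^{w\le n_2}$ is extension-closed and contains both $b$ and $b_{\le n_2-1}[1]\in \cT^{w\le n_2-2}$, the third vertex lies in $\cT^{w\ge n_2}\cap \cT^{w\le n_2}=\cT^{w=n_2}$; thus $b_{\ge n_2}=c[-n_2]$ for a unique (up to isomorphism) $c\in \cH$. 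Apply the homological functor $\mathrm{H}_{n_2}$ to the triangle. The weight complex of $b_{\le n_2-1}$ is supported in cohomological degrees $\le n_2-1$, so $\mathrm{H}_{n_2}(b_{\le n_2-1})=\mathrm{H}_{n_2+1}(b_{\le n_2-1})=0$, and the long exact sequence collapses to an isomorphism $\mathrm{H}_{n_2}(b_{\ge n_2})\simeq \mathrm{H}_{n_2}(b)$. The right-hand side vanishes by hypothesis (as $n_2\neq 0$), while the left-hand side equals $c$ since the weight complex of $c[-n_2]$ is $c$ placed in degree $n_2$. Hence $c=0$, so $b_{\ge n_2}=0$ and $b\simeq b_{\le n_2-1}\in \cT^{w\le n_2-1}$. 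The hypothesis $\mathrm{H}_n(b)=0$ for $n\neq 0$ is preserved (the object $b$ itself is unchanged), and a symmetric argument using a weight decomposition at level $n_1$ raises $n_1$ by one. After finitely many iterations we reach $b\in \cT^{w=0}$.

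Essentially no real obstacle is expected: the argument is routine long-exact-sequence bookkeeping, supplemented by the tautological observation that $\mathrm{H}_n$ is faithful on $\cT^{w=n}$ (the weight complex of a pure object of weight $n$ is concentrated in degree $n$). Semi-simplicity of $\cH$ enters only to guarantee that $\cH$ is abelian, which is needed in order to apply Proposition \ref{prop:homological} with $G=\Id$ and thereby define the functors $\mathrm{H}_n\colon \cT\to \cH$ in the first place. As in the proof of Theorem \ref{thm:main1}, one may harmlessly replace $\cT$ and $\cH$ by their Karoubizations throughout, ensuring that the weight decompositions invoked above behave as required.
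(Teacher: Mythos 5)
The paper's own ``proof'' of Proposition \ref{prop:conservativity-1} is merely a citation to \cite[Rk.~3.3.6]{Bondarko-Killing}, so there is no argument in the paper to compare against directly. Your attempt, however, contains a genuine gap, and in fact the gap is fatal in a way that can be detected without going into details: your argument never actually uses semi-simplicity of $\cH$ (you yourself observe that ``semi-simplicity of $\cH$ enters only to guarantee that $\cH$ is abelian''), yet the statement is false for a general abelian heart. For instance, take $\cT = K^b(\mathrm{Ab})$ with its standard bounded weight structure whose heart is $\mathrm{Ab}$, and let $b$ be the two-term complex $\bbZ \xrightarrow{\,2\,} \bbZ$ in degrees $-1,0$. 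Then $\mathrm{H}_{-1}(b)=0$ and $\mathrm{H}_0(b)=\bbZ/2$, so $\mathrm{H}_n(b)=0$ for all $n\neq 0$, but $b$ is not in $\cT^{w=0}$ since it is not homotopy equivalent to a complex concentrated in degree $0$. An argument that does not use semi-simplicity cannot be correct.

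The concrete source of the error is the orientation of the weight decomposition. In the cohomological convention of this paper (Definition \ref{dws}(iv)), applying the axiom to $b[n_2-1]$ and shifting back produces a triangle of the shape
\[
b_{\ge n_2}\longrightarrow b \longrightarrow b_{\le n_2-1}\longrightarrow b_{\ge n_2}[1],
\]
with the \emph{high-weight} piece appearing first, not last; this is opposite to the $t$-structure truncation triangle and opposite to what you wrote. With the correct orientation the relevant part of the long exact sequence reads
\[
\mathrm{H}_{n_2-1}(b)\to \mathrm{H}_{n_2-1}(b_{\le n_2-1})\to \mathrm{H}_{n_2}(b_{\ge n_2})\to \mathrm{H}_{n_2}(b)=0,
\]
so $\mathrm{H}_{n_2}(b_{\ge n_2})$ is only exhibited as a quotient of $\mathrm{H}_{n_2-1}(b_{\le n_2-1})$, which need not vanish; the collapse you claimed does not occur. (With the orientation you wrote down, the sequence would collapse, but that orientation simply does not come from the axioms, which is why your argument ``accidentally'' makes no use of semi-simplicity and would prove the false general statement above.)

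A repair is possible, but it requires both a different decomposition and an actual use of semi-simplicity. Instead of decomposing at level $n_2-1$, decompose $b$ at level $m$: one obtains a triangle $Z\to b\to Y\to Z[1]$ with $Z\in\cT^{w\ge m+1}$ and $Y\in\cT^{w\le m}$, and the long exact sequence shows $\mathrm{H}_n(Y)=0$ for $n\neq m$ and $\mathrm{H}_n(Z)=0$ for $n\neq m+1$. By induction on the weight-length one gets $Y\in\cT^{w=m}$ and $Z\in\cT^{w=m+1}$, so the connecting morphism $\delta\colon Y\to Z[1]$ is a morphism between objects of $\cT^{w=m}$, i.e.\ (after shifting) a morphism in $\cH$. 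The long exact sequence shows that this morphism is an epimorphism in $\cH$, and \emph{here} semi-simplicity enters: epimorphisms in $\cH$ split, so $\delta$ is a split epimorphism in $\cT$, forcing the map $Z\to b$ to vanish, whence $Y\simeq b\oplus Z[1]$ and $b$, being a retract of $Y\in\cT^{w=m}$, lies in $\cT^{w=m}$ (after Karoubizing). The boundary cases $m=n_1$ or $m=n_2$ are handled by the dual argument (splitting a monomorphism in $\cH$). This is roughly the content of the cited remark in \cite{Bondarko-Killing}.
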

\begin{proof}
Consult \cite[Rk.~3.3.6]{Bondarko-Killing}.
\end{proof}

\begin{remark}[K\"unneth spectral sequence]
Let $(\cT,\otimes, {\bf 1})$ be a symmetric monoidal triangulated equipped with a bounded weight structure $w$, and $G\colon \cH \to \mathrm{A}$ a symmetric monoidal additive functor. Consider the associated homological functors $\mathrm{H}_n\colon \cT \to \mathrm{A}, n \in \bbZ$. The arguments used in the proof of Theorem \ref{thm:main1} allow us to conclude that there exists a convergent K\"unneth spectral sequence 
$$E_1^{pq}=\mathrm{H}_q(a^p\otimes b) \Rightarrow \mathrm{H}_{p+q}(a\otimes b)\,.$$
Assume that the (abelian) category $\mathrm{A}$ is moreover semi-simple and local. Then, given any $\otimes$-invertible object $b \in \cT$, there exists an integer $m_b$ such that $\mathrm{H}_n(b)=0$ for every $n \neq m_b$ and $\mathrm{H}_{m_b}(b)\in \mathrm{A}$ is $\otimes$-invertible.
\end{remark}
\section{Proof of Theorem \ref{thm:main2}}\label{sec:proof2}



Let $b \in \cT$ be a $\otimes$-invertible object. Thanks to Proposition \ref{prop:auxiliar}, it suffices to prove that $b$ is $w$-pure. Since the functors $\iota_{\kappa(\mathfrak{p})}\colon \cT \to \cT_{\kappa(\mathfrak{p})}$ are symmetric monoidal, and by assumption the categories $\cT_{\kappa(\mathfrak{p})}$ have the $w_{\kappa(\mathfrak{p})}$-Picard property, the objects $\iota_{\kappa(\mathfrak{p})}(b)$ are $w_{\kappa(\mathfrak{p})}$-pure. Concretely, $\iota_{\kappa(\mathfrak{p})}(b)$ belongs to $\cT_{\kappa(\mathfrak{p})}^{w=m_{\kappa(\mathfrak{p})}}$ for some integer $m_{\kappa(\mathfrak{p})}\in \bbZ$. Our goal is to prove that all the integers $m_{\kappa(\mathfrak{p})}$, with $\mathfrak{p} \in \mathrm{Spec}(R)$, are equal and that the object $b$ belongs to $ \cT^{w=m_{k(\mathfrak{p})}}$.


We start by addressing the first goal. Since by assumption the commutative ring $R$ is indecomposable, its spectrum $\mathrm{Spec}(R)$ is connected. Hence, it suffices to verify that $m_{\kappa(\mathfrak{p})}=m_{\kappa(\mathfrak{P})}$ for every $\mathfrak{p} \in \mathrm{Spec}(R)$ belonging to the closure of a prime ideal $\mathfrak{P} \in \mathrm{Spec}(R)$; in the particular case where $R$ is moreover an integral domain we can simply take $\mathfrak{P}=\{0\}$. Note that the assumptions of Theorem \ref{thm:main2}, as well as the definition of the integers $m_{\kappa(\mathfrak{p})}$ and $m_{\kappa(\mathfrak{P})}$,  
are (categorically) self-dual; see Remark \ref{rwd}. Therefore, it is enough to verify the inequalities $m_{\kappa(\mathfrak{p})}\ge m_{\kappa(\mathfrak{P})}$.

 
Given an $R$-algebra $S$, consider the abelian category $\mathrm{PShv}^S(\cH)$ of $R$-linear functors from $\cH^\op$ to the category of $S$-modules. Note that the Yoneda functor
\begin{eqnarray}\label{eq:Yoneda}
\cH \too \mathrm{PShv}^S(\cH) && a \mapsto \left(c \mapsto \Hom_\cH(c,a) \otimes_R S\right)
\end{eqnarray} 
induces a fully-faithful embedding of $\cH\otimes_R S$ into the full subcategory of $\mathrm{PShv}^S(\cH)$ consisting of projective objects; see \cite[Lem.~8.1]{MVW}. Note also that every $R$-algebra homomorphism $S \to S'$ gives rise to a functor $-\otimes_S S'\colon \mathrm{PShv}^S(\cH) \to \mathrm{PShv}^{S'}(\cH)$. Since $\mathrm{PShv}^S(\cH)$ is abelian, Proposition \ref{prop:homological} yields an homological~functor
\begin{eqnarray*}
\mathrm{H}_0^S\colon \cT \too \mathrm{PShv}^S(\cH) && a \mapsto \left(c \mapsto H^0(\Hom_\cH(c, t(a))\otimes_R S)\right)\,.
\end{eqnarray*}
Recall from assumption (A2) that the functor $\iota_{\kappa(\mathfrak{p})}$ induces a $\otimes$-equivalence of categories $\mathrm{Kar}(\cH \otimes_R \kappa(\mathfrak{p}))\simeq \cH_{\kappa(\mathfrak{p})}$. This implies that $\mathrm{H}_0^{\kappa(\mathfrak{p})}$ factors through $\iota_{\kappa(\mathfrak{p})}$. Consequently, thanks to Remark \ref{rk:one-way}, we have $\mathrm{H}_n^{\kappa(\mathfrak{p})}(b)=0$ for every $n \neq m_{\kappa(\mathfrak{p})}$. 

Let us denote by $Q$ the localization of $R/\mathfrak{P}$ at the prime ideal $\mathfrak{p}$. Note that $Q$ is a local  Noetherian integral domain with fraction field $\kappa(\mathfrak{P})$.
Recall from assumption (A1) that the commutative ring $R$ is Noetherian and that the $R$-modules of morphisms of the heart $\cH$ are finitely generated and flat. 
Thanks to the universal coefficients theorem, this implies that 
$\mathrm{H}_l^{Q}(b)\otimes_{Q}\kappa(\mathfrak{p}) 
=\mathrm{H}_l^{\kappa(\mathfrak{p})}(b)$, with $l$ being the largest integer such that $\mathrm{H}_l^{Q}(b)\neq 0$. 
Consequently, by applying the Nakayama lemma to the local ring $Q$ and to the (objectwise) finitely generated $Q$-module $\mathrm{H}^{Q}_l(b)$, we conclude that $\mathrm{H}^{\kappa(\mathfrak{p})}_l(b)\neq 0$. Hence, the equality $m_{\kappa(\mathfrak{p})}=l$ holds. Now, since $\kappa(\mathfrak{P})$ is a flat $Q$-module, the universal coefficients theorem yields that $\mathrm{H}_n^{\kappa(\mathfrak{P})}(b)=0$ for every $n>l$. This allows us to conclude that $l=m_{k(\mathfrak{p})}\geq m_{k(\mathfrak{P})}$.
 
Let us now address the second goal, \ie prove that $b \in \cT^{w=m}$ with $m:=m_{k(\mathfrak{p})}$. Making use of Remark \ref{rwd} once again, we observe that it suffices to prove that $b \in \cT^{w\leq m}$. Thanks to Proposition \ref{prop:conservativity-2}, it is enough to verify that $\mathrm{H}_n^R(b)=0$ for every $n>m$. Let us denote by $l$ the largest integer such that $\mathrm{H}_l^R(b)\neq 0$. An argument similar to the one used in the preceding paragraph, implies that $\mathrm{H}_l^R(b)\otimes_R \kappa(\mathfrak{p})= \mathrm{H}_l^{\kappa(\mathfrak{p})}(b)$ for every $\mathfrak{p}\in  \mathrm{Spec}(R)$. Since $\mathrm{H}_n^{\kappa(\mathfrak{p})}(b)=0$ for all $n>m$ and $\mathfrak{p} \in \mathrm{Spec}(R)$, we then conclude that $\mathrm{H}_n^R(b)=0$ for every $n>m$. This finishes the proof.


\begin{proposition}[Conservativity II]\label{prop:conservativity-2}
Let $\cT$ be a triangulated category~equipped~with a bounded weight structure $w$ whose heart $\cH$ is $R$-linear and small. Consider the associated homological functors $\mathrm{H}^R_n\colon \cT \to \mathrm{PShv}^R(\cH), n \in \bbZ$. Under these assumptions, an object $b \in \cT$ belongs to $\cT^{w\le m}$ if and only if $\mathrm{H}^R_n(b)=0$ for every $n > m$.
\end{proposition}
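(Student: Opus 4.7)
My plan is to treat the two directions separately, with the main work concentrated in the ``if'' direction. A preliminary observation, immediate from unwinding the definitions together with the exactness of evaluation on the presheaf category, is that the Yoneda functor $G\colon \cH \to \mathrm{PShv}^R(\cH)$, $y \mapsto \Hom_\cH(-,y)$, allows one to identify $\mathrm{H}^R_n(b) = H^n(G(t(b)))$ in $\mathrm{PShv}^R(\cH)$. The ``only if'' direction then follows immediately: when $b \in \cT^{w\le m}$, the weight Postnikov tower allows us to choose the weight complex $t(b)$ concentrated in cochain degrees $\le m$, whence $G(t(b))$ is also concentrated in those degrees and $\mathrm{H}^R_n(b) = 0$ for every $n > m$.

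For the converse, which is the substantive part, the plan is to iteratively truncate $t(b)$ from above. Suppose $t(b)$ has top cochain degree $k$ with $k > m$. The hypothesis forces $H^k(G(t(b))) = 0$, so the top differential $G(d^{k-1})\colon G(t(b)^{k-1}) \to G(t(b)^k)$ is an epimorphism in $\mathrm{PShv}^R(\cH)$. Because Yoneda images are projective in $\mathrm{PShv}^R(\cH)$ (as recalled in the proof of Theorem~\ref{thm:main2} via \cite[Lem.~8.1]{MVW}), this epimorphism splits; the full faithfulness of $G$, combined with the Karoubi-closure of $\cH$ (automatic for the heart of a weight structure), then shows that $d^{k-1}$ itself is a split epimorphism in $\cH$ with kernel $K \in \cH$ and $t(b)^{k-1} \simeq K \oplus t(b)^k$. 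A standard contractible-summand argument then yields a homotopy equivalence between $t(b)$ and a complex of top cochain degree $k-1$, and since the weight complex is only well-defined up to homotopy, I may replace $t(b)$ by this truncation. Iterating at most $k-m$ times produces a weight complex concentrated in degrees $\le m$; a final appeal to the weight Postnikov tower reconstructs $b$ as an iterated extension of the pieces $t(b)^j[-j] \in \cT^{w=j}$ for $j \le m$, which lie in the extension-closed subcategory $\cT^{w\le m}$. Hence $b \in \cT^{w\le m}$.

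The hard part here is the descent from the ambient abelian category $\mathrm{PShv}^R(\cH)$ back to the additive heart $\cH$: vanishing of cohomology in $\mathrm{PShv}^R(\cH)$ a priori only gives epimorphisms between Yoneda images, and one must pass from such an epimorphism to an actual splitting in $\cH$ usable to shrink the weight complex. This descent is precisely what the combination of projectivity of Yoneda images, full faithfulness of $G$, and Karoubi-closure of $\cH$ provides. A secondary subtlety is that the iterative truncation must be organized coherently up to the homotopy indeterminacy of $t(b)$; this is handled automatically by working with weight complexes throughout.
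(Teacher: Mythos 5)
Your overall approach and key technical ingredients are sound: identifying $\mathrm{H}^R_n(b)$ with $H^n(G(t(b)))$ for the Yoneda embedding $G$, using projectivity of representables plus full faithfulness to upgrade the surjection to a split epimorphism $d^{k-1}$ in $\cH$, and Karoubi-closure of $\cH$ to split off the contractible summand. The ``only if'' direction is fine. (Note the paper itself merely cites \cite[Prop.~3.3.3 and Rk.~3.3.4(2)]{Bondarko-Killing}, so your direct argument is genuinely a different route.)

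The gap is in the very last step. Replacing $t(b)$ by a homotopy-equivalent truncation is legitimate for computing the $\mathrm{H}^R_n$, but it does \emph{not} produce a new weight Postnikov tower of $b$. The Postnikov tower of $b$ is a structure on $b$ itself, yielding a specific complex in the homotopy class of $t(b)$; an arbitrary complex in that homotopy class need not be realized by any Postnikov tower of $b$. So ``a final appeal to the weight Postnikov tower reconstructs $b$ as an iterated extension of the truncated pieces'' silently assumes exactly what one must prove, namely that the weight complex functor reflects weight boundedness. The fix is to argue directly on $b$ at each step rather than on the abstract complex: starting from $b\in\cT^{w\le N}$ with $N>m$ (boundedness), take a weight truncation triangle $h_N[-N]\to b\to w_{\le N-1}b\xrightarrow{\beta}h_N[-N+1]$ with $h_N\in\cH$ and $w_{\le N-1}b\in\cT^{w\le N-1}$, and let $\alpha\colon h_{N-1}[-N+1]\to w_{\le N-1}b$ be the next layer of the tower, so that $\beta\circ\alpha=d^{N-1}[-N+1]$. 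Your split-epi argument produces $\sigma\colon h_N\to h_{N-1}$ with $d^{N-1}\sigma=\id$; then $\alpha\circ\sigma[-N+1]$ is a section of $\beta$, the triangle splits, and $b$ is a retract of $w_{\le N-1}b$. Since $\cT^{w\le N-1}$ is Karoubi-closed by Definition~\ref{dws}(i), this gives $b\in\cT^{w\le N-1}$, and one iterates. With this replacement your proof closes.
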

\begin{proof}
Combine \cite[Prop.~3.3.3]{Bondarko-Killing} with \cite[Rk.~3.3.4(2)]{Bondarko-Killing}.
\end{proof}

\begin{remark}\label{rcoeff} 
\begin{itemize}
\item[(i)] Suppose that in Theorem \ref{thm:main2} the commutative ring $R$ is of the form $\Pi_{j=1}^r R_j$, with $R_j$ an indecomposable Noetherian ring. In this case, the corresponding idempotents $e_j \in R$ give naturally rise to categorical decompositions $\cT\simeq \Pi^r_{j=1} \cT_j$ and $\cH \simeq \Pi_{j=1}^r \cH_j$. By applying Theorem \ref{thm:main2} to each one of the categories $\cT_j$, we conclude that
$$ \mathrm{Pic}(\cT) \simeq \Pi^r_{j=1} \mathrm{Pic}(\cT_j) \simeq \Pi^r_{j=1}(\mathrm{Pic}(\cH_j)\times \bbZ) \simeq \mathrm{Pic}(\cH) \times \bbZ^r$$
whenever all the triangulated categories $\cT_j$ are non-zero;


\item[(ii)] At assumption (A2) of Theorem \ref{thm:main2}, instead of working with all prime ideals $\mathfrak{p} \in \mathrm{Spec}(R)$, note that it suffices to consider any connected subset of $\mathrm{Spec}(R)$ that contains all maximal ideals of $R$. For example, in the particular case where $R$ is local, it suffices to consider the (unique) closed point $\mathfrak{p_0}$ of $\mathrm{Spec}(R)$.

\end{itemize}
\end{remark}

\section{Proof of Theorem \ref{thm:computation-2}}\label{sub:proof22}
Recall from \cite[Part 4 and Lecture~20]{MVW}\cite{Voevodsky} the construction of the symmetric monoidal triangulated category $\mathrm{DM}_{\mathrm{gm}}(k;R)$. Given any two zero-dimensional smooth $k$-schemes $X$ and $Y$, we have trivial positive Ext-groups:
\begin{eqnarray*}
\Hom_{\mathrm{DMA}(k;R)}(M(X)_R, M(Y)_R[n])=0 && n >0\,.
\end{eqnarray*}
This implies that the subcategory $\mathrm{AM}(k;R)\subset \mathrm{DMA}(k;R)$ is {\em negative} in the sense of \cite[Def.~4.3.1(1)]{Bondarko-Weight}. Making use of \cite[Thm.~4.3.2 II and Prop.~5.2.2]{Bondarko-Weight}, we conclude that the category $\mathrm{DMA}(k;R)$ carries a bounded weight structure with heart $\mathrm{AM}(k;R)$. By restriction, $\mathrm{D}\cA(k;R)$ inherits a bounded weight structure $w_R$~with~heart~$\cA(k;R)$.

Let us now show that the category $\mathrm{D}\cA(k;R)$ has the $w_R$-Picard property; note that this automatically concludes the proof. By construction, $\cA(k;R)$ is essentially small. Moreover, we have natural isomorphisms
$$\Hom_{\mathrm{D}\cA(k;R)}(M(X)_R,M(Y)_R)\simeq CH^0(X\times Y)_R\,.$$ 
Since the $R$-modules $CH^0(X\times Y)_R$ are free, assumptions (A1) of Theorem \ref{thm:main2} are verified. In what concerns assumptions (A2), take for $\cT_{\kappa(\mathfrak{p})}$ the category $\mathrm{D}\cA(k;\kappa(\mathfrak{p}))$ and for $\iota_{\kappa(\mathfrak{p})}$ the functor $-\otimes_R \kappa(\mathfrak{p})\colon \mathrm{D}\cA(k;R) \to \mathrm{D}\cA(k;\kappa(\mathfrak{p}))$. By construction, the latter functor is weight-exact (see Remark \ref{rk:weight-exact}), symmetric monoidal, and induces a $\otimes$-equivalence of categories between $\mathrm{Kar}(\cA(k;R) \otimes_R \kappa(\mathfrak{p}))$ and $\cA(k;\kappa(\mathfrak{p}))$. This shows that assumptions (A2) are also verified. 

Let us now prove that the categories $\mathrm{D}\cA(k;\kappa(\mathfrak{p}))$ have the $w_{\kappa(\mathfrak{p})}$-Picard property; thanks to Theorem \ref{thm:main2} this implies that $\mathrm{D}\cA(k;R)$ has the $w_R$-Picard property. In order to do so, we will make use of Theorem \ref{thm:main1}. Concretely, we need to prove that the categories $\cA(k;\kappa(\mathfrak{p}))$ are semi-simple and local. Let us write $L$ for the composite of the finite separable field extensions $l_i/k$ inside $\overline{k}$, $\mathrm{G}$ for the profinite Galois group $\mathrm{Gal}(L/k)$, and $\mathrm{G}_i$ for the finite Galois group $\mathrm{Gal}(l_i/k)$. Thanks to assumption (B1), there is a $\otimes$-equivalence between $\cA(k;\kappa(\mathfrak{p}))$ and the category of finite dimensional $\kappa(\mathfrak{p})$-linear continuous $\mathrm{G}$-representations $\mathrm{Rep}_{\kappa(\mathfrak{p})}(\mathrm{G})$. Consequently, since $\mathrm{G}\simeq \mathrm{lim}_i \mathrm{G}_i$, we conclude that $\cA(k;\kappa(\mathfrak{p}))\simeq\mathrm{colim}_i \mathrm{Rep}_{\kappa(\mathfrak{p})}(\mathrm{G}_i)$. Now, since the group $\mathrm{G}_i$ is finite, the category $\mathrm{Rep}_{\kappa(\mathfrak{p})}(\mathrm{G}_i)$ may be identified with the category of finitely generated (right) $\kappa(\mathfrak{p})[\mathrm{G}_i]$-modules. Thanks to assumption (B2), the degree of the field extension $l_i/k$ is invertible in $R$ and hence in $\kappa(\mathfrak{p})$. The (classical) Maschke theorem then implies that the category $\mathrm{Rep}_{\kappa(\mathfrak{p})}(\mathrm{G}_i)$ is semi-simple. Note that this category is moreover local since the tensor product is defined on the underlying $\kappa(\mathfrak{p})$-vector spaces. The proof follows now automatically from the above description of the categories $\cA(k;\kappa(\mathfrak{p}))$.
\section{Proof of Theorem \ref{thm:computation-4}}
Let us denote by $\cA\mathrm{T}(k;R)$ the smallest additive, Karoubian, full subcategory of $\mathrm{D}\cA\mathrm{T}(k;R)$ containing the objects $M(X)_R(m)[2m]$, with $M(X)_R \in \cA$ and $m \in \bbZ$. Under these notations, we have trivial positive Ext-groups:
\begin{eqnarray*}
\Hom_{\mathrm{D}\cA\mathrm{T}(k;R)}(M(X)_R(m)[2m], M(Y)_R(m')[2m'][n])=0 && n >0\,.
\end{eqnarray*}
This implies that the subcategory $\cA\mathrm{T}(k;R) \subset \mathrm{D}\cA\mathrm{T}(k;R)$ is negative. Making use of \cite[Thm.~4.3.1 II and Prop.~5.2.2]{Bondarko-Weight}, we conclude that $\mathrm{D}\cA\mathrm{T}(k;R)$ carries a bounded weight structure $w_R$ with heart $\cA\mathrm{T}(k;R)$. Thanks to the equivalence of categories
\begin{eqnarray*}
\mathrm{Gr}_\bbZ\cA(k;R) \stackrel{\simeq}{\too} \cA\mathrm{T}(k;R) && \{M(X_m)\}_{m \in \bbZ} \mapsto \bigoplus_{m \in \bbZ} M(X_m)(m)[2m]\,,
\end{eqnarray*}
an argument similar to the one of the proof of Theorem \ref{thm:computation-2} implies that the category $\mathrm{D}\cA\mathrm{T}(k;R)$ has the $w_R$-Picard property. Consequently, we have $\mathrm{Pic}(\mathrm{D}\cA\mathrm{T}(k;R))\simeq \mathrm{Pic}(\cA\mathrm{T}(k;R))\times \bbZ$. The proof follows now from the natural isomorphisms 
$$\mathrm{Pic}(\cA\mathrm{T}(k;R))\simeq \mathrm{Pic}(\mathrm{Gr}_\bbZ\cA(k;R))\simeq \mathrm{Pic}(\cA(k;R))\times \bbZ\,.$$
\section{Proof of Theorem \ref{thm:computation-last}}
Recall from Ayoub \cite[\S4]{Ayoub}\cite[\S2.1.1]{Ayoub2} the construction of the symmetric monoidal triangulated category ${\bf DA}(k;\bbZ)$; we write $\mathrm{Boot}(k;\bbZ)$ for the thick triangulated subcategory generated by the $\otimes$-unit $\Sigma^\infty(\mathrm{Spec}(k)_+)_\bbZ$. By construction, we have exact symmetric monoidal functors $(-)_\bbZ\colon \mathrm{SH}(k) \to {\bf DA}(k;\bbZ)$ which restrict to the bootstrap categories. Let $\mathrm{P}(k)$, resp. $\mathrm{P}(k;\bbZ)$, be the smallest additive, Karoubian, full subcategory of $\mathrm{Boot}(k)$, resp. $\mathrm{Boot}(k;\bbZ)$, containing the $\otimes$-unit $\Sigma^\infty(\mathrm{Spec}(k)_+)$, resp. $\Sigma^\infty(\mathrm{Spec}(k)_+)_\bbZ$. We have trivial positive Ext-groups (see  \cite[Thm.~4.14]{ICM}):
$$ \Hom_{\mathrm{Boot}(k)}(\Sigma^\infty(\mathrm{Spec}(k)_+), \Sigma^\infty(\mathrm{Spec}(k)_+)[n])=0 \quad n>0\,;$$
similarly for $\mathrm{Boot}(k;\bbZ)$. This implies that the subcategory $\mathrm{P}(k)\subset \mathrm{Boot}(k)$, resp. $\mathrm{P}(k;\bbZ) \subset \mathrm{Boot}(k;\bbZ)$, is negative. Making use of \cite[Thm.~4.3.2 II and Prop.~5.2.2]{Bondarko-Weight}, we conclude that the category $\mathrm{Boot}(k)$, resp. $\mathrm{Boot}(k;\bbZ)$, carries a bounded weight structure $w$, resp. $w_\bbZ$, with heart $\mathrm{P}(k)$, resp. $\mathrm{P}(k;\bbZ)$.

Let us now show that the category $\mathrm{Boot}(k)$ has the $w$-Picard property. Thanks to the ring isomorphism \eqref{eq:ring-iso}, $\mathrm{P}(k)$ identifies with the category $\mathrm{Proj}(GW(k))$ of finitely generated projective $GW(k)$-modules. Consequently, since the Grothendieck-Witt ring $GW(k)$ is indecomposable (see \cite[Prop.~2.2]{Witt}), all the assumptions (A1) of Theorem \ref{thm:main2} (with $R=GW(k)$) are verified. In what concerns assumptions (A2), take for $\cT_{\kappa(\mathfrak{p})}$ the bounded derived category $\cD^b(\kappa(\mathfrak{p}))$ of finite dimensional $\kappa(\mathfrak{p})$-vector spaces $\mathrm{Vect}(\kappa(\mathfrak{p}))$ and for $\iota_{\kappa(\mathfrak{p})}$ the composed functor:
\begin{equation}\label{eq:composed}
\mathrm{Boot}(k) \stackrel{(-)_\bbZ}{\too} \mathrm{Boot}(k;\bbZ) \stackrel{t(-)}{\too} K^b(\mathrm{Proj}(GW(k))) \stackrel{-\otimes_{GW(k)}\kappa(\mathfrak{p})}{\too} \cD^b(\kappa(\mathfrak{p}))\,.
\end{equation}
Some explanations are in order: since the category ${\bf DA}(k;\bbZ)$ is defined as the localization of a certain category of complexes, it admits a tensor differential graded (=dg) enhancement. Making use of \cite[Lem.~18]{bach}, we then conclude that the weight complex construction gives rise to an exact symmetric monoidal functor $t(-)$ with values in the bounded homotopy category of $\mathrm{Proj}(GW(k))$. By construction, the composed functor \eqref{eq:composed} is weight-exact, symmetric monoidal, and induces a $\otimes$-equivalence of categories between $\mathrm{Kar}(\mathrm{P}(k) \otimes_{GW(k)} \kappa(\mathfrak{p}))$ and $\mathrm{Vect}(\kappa(\mathfrak{p}))$. This shows that the assumptions (A2) are also verified. Finally, since the categories $\cD^b(\kappa(\mathfrak{p}))$ clearly have the $w_{\kappa(\mathfrak{p})}$-Picard property, we conclude from Theorem \ref{thm:main2} that $\mathrm{Boot}(k)$ has the $w$-Picard property. This finishes the proof.

\begin{remark}[Coefficients]
Let $R$ be a commutative ring. Instead of ${\bf DA}(k;\bbZ)$ and $\mathrm{Boot}(k;\bbZ)$, we can consider more generally the symmetric monoidal triangulated categories ${\bf DA}(k;R)$ and $\mathrm{Boot}(k;R)$, respectively. Under the corresponding conditions, a proof similar to the one of Theorem \ref{thm:computation-last}, with $\mathrm{Boot}(k)$ and $GW(k)$ replaced by $\mathrm{Boot}(k;R)$ and $GW(k)_R$, shows that $\mathrm{Pic}(\mathrm{Boot}(k;R)) \simeq \mathrm{Pic}(GW(k)_R) \times \bbZ$. This implies, in particular, that the categories $\mathrm{Boot}(k)$ and $\mathrm{Boot}(k;\bbZ)$, although not equivalent, have nevertheless the same Picard group!
\end{remark}

\section{Proof of Theorem \ref{thm:computation-6}}
Recall from \cite[\S9]{book}\cite[\S4]{Hopf} the construction of the symmetric monoidal triangulated category $\mathrm{KMM}(k;R)$. Given any two finite separable field extensions $l_1/k$ and $l_2/k$, we have trivial positive Ext-groups (see \cite[Prop.~4.4]{Hopf}):
\begin{eqnarray*}
\Hom_{\mathrm{NMAM}(k;R)}(U(l_1)_R, U(l_2)_R[n])\simeq \pi_{-n}(K(l_1\otimes_k l_2)\wedge HR) =0 && n >0\,.
\end{eqnarray*}
This implies that the subcategory $\mathrm{AM}(k;R) \subset \mathrm{NMAM}(k;R)$ is negative. Making use of \cite[Thm.~4.3.1 II and Prop.~5.2.2]{Bondarko-Weight}, we conclude that the category $\mathrm{NMAM}(k;R)$ carries a bounded weight structure\footnote{A bounded weight structure on the category of noncommutative mixed motives was originally constructed in \cite[Thm.~1.1]{Weight}.} with heart $\mathrm{AM}(k;R)$. By restriction, $\mathrm{NM}\cA(k;R)$ inherits a bounded weight structure $w_R$ with heart $\cA(k;R)$.

Now, a proof similar to the one of Theorem \ref{thm:computation-2}, with $\mathrm{D}\cA(k;R)$ and $\mathrm{D}\cA(k;\kappa(\mathfrak{p}))$ replaced by $\mathrm{NM}\cA(k;R)$ and $\mathrm{NM}\cA(k;\kappa(\mathfrak{p}))$, respectively, allows us to conclude that the category $\mathrm{NM}\cA(k;R)$ has the $w_R$-Picard property. Consequently, we have $\mathrm{Pic}(\mathrm{NM}\cA(k;R))\simeq \mathrm{Pic}(\cA(k;R))\times \bbZ$.  

\section{Proof of Theorem \ref{thm:computation-7}}\label{sec:prooflast}
\subsection*{Item (i)} Similarly to the proof of Theorem \ref{thm:computation-6}, given any two central simple $k$-algebras $A$ and $B$, we have trivial positive Ext-groups (see \cite[Prop.~4.4]{Hopf}):
\begin{eqnarray*}
\Hom_{\mathrm{NMCSA}(k;R)}(U(A)_R, U(B)_R[n])\simeq \pi_{-n} (K(A^\op \otimes_k B) \wedge HR)=0 && n >0\,.
\end{eqnarray*}
This implies that the subcategory $\mathrm{CSA}(k;R) \subset \mathrm{NMCSA}(k;R)$ is negative. Making use of \cite[Thm.~4.3.2 II and Prop.~5.2.2]{Bondarko-Weight}, we conclude that the category $\mathrm{NMCSA}(k;R)$ carries a bounded weight structure $w_R$ with heart $\mathrm{CSA}(k;R)$. 

Let us now show that the category $\mathrm{NMCSA}(k;R)$ has the $w_R$-Picard property. By construction, the category $\mathrm{CSA}(k;R)$ is essentially small. Furthermore, as explained in the proof of \cite[Prop.~2.25]{separable}, we have natural identifications ($\mathrm{ind}$=index)
\begin{equation}\label{eq:identifications}
\Hom_{\mathrm{CSA}(k;R)}(U(A)_R, U(B)_R)= \mathrm{ind}(A^\op \otimes_k B) \cdot R\,,
\end{equation}
under which the composition law of $\mathrm{CSA}(k;R)$ corresponds to multiplication. This implies, in particular, that the assumptions (A1) of Theorem \ref{thm:main2} are verified. In what concerns assumptions (A2), take for $\cT_{\kappa(\mathfrak{p})}$ the category $\mathrm{NMCSA}(k;\kappa(\mathfrak{p}))$ and for $\iota_{\kappa(\mathfrak{p})}$ the functor $-\otimes_R \kappa(\mathfrak{p})\colon \mathrm{NMCSA}(k;R) \to \mathrm{NMCSA}(k;\kappa(\mathfrak{p}))$. By construction, the latter functor is weight-exact (see Remark \ref{rk:weight-exact}), symmetric monoidal, and induces a $\otimes$-equivalence of categories between $\mathrm{Kar}(\mathrm{CSA}(k;R) \otimes_R \kappa(\mathfrak{p}))$ and $\mathrm{CSA}(k;\kappa(\mathfrak{p}))$. This shows that the assumptions (A2) are also verified.

Let us now prove that the categories $\mathrm{NMCSA}(k;\kappa(\mathfrak{p}))$ have the $w_{\kappa(\mathfrak{p})}$-Picard property; thanks to Theorem \ref{thm:main2} this implies that $\mathrm{NMCSA}(k;R)$ has the $w_R$-Picard property. In order to do so, we will make use of Theorem \ref{thm:main1}. Concretely, we need to prove that $\mathrm{CSA}(k,\kappa(\mathfrak{p}))$ is semi-simple and local. This follows from the next~result:
\begin{proposition}\label{prop:coefficients}
\begin{itemize}
\item[(i)] When $\mathrm{char}(\kappa(\mathfrak{p}))=0$, the category $\mathrm{CSA}(k;\kappa(\mathfrak{p}))$ is $\otimes$-equivalent to the category $\mathrm{Vect}(\kappa(\mathfrak{p}))$ of finite dimensional $\kappa(\mathfrak{p})$-vector spaces;
\item[(ii)] When $\mathrm{char}(\kappa(\mathfrak{p}))=p>0$, the category $\mathrm{CSA}(k;\kappa(\mathfrak{p}))$ is $\otimes$-equivalent to the category $\mathrm{Gr}_{\mathrm{Br}(k)\{p\}}\mathrm{Vect}(\kappa(\mathfrak{p}))$ of $\mathrm{Br}(k)\{p\}$-graded objects in $\mathrm{Vect}(\kappa(\mathfrak{p}))$, where $\mathrm{Br}(k)\{p\}$ stands for the $p$-primary component of $\mathrm{Br}(k)$.
\end{itemize}
\end{proposition}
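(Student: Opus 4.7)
The approach is to read off everything from the explicit hom formula \eqref{eq:identifications}: $\Hom_{\mathrm{CSA}(k;\kappa(\mathfrak{p}))}(U(A)_{\kappa(\mathfrak{p})}, U(B)_{\kappa(\mathfrak{p})}) = \mathrm{ind}(A^\op \otimes_k B) \cdot \kappa(\mathfrak{p})$, together with the fact that composition corresponds to multiplication. Since $\kappa(\mathfrak{p})$ is a field, each such ideal equals either $0$ or all of $\kappa(\mathfrak{p})$, the dichotomy being governed by whether $\mathrm{char}(\kappa(\mathfrak{p}))$ divides the positive integer $\mathrm{ind}(A^\op \otimes_k B)$. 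The only external input I will invoke is the classical Brauer-theoretic fact that $\mathrm{ind}(C)$ and the period $\mathrm{per}(C) = \mathrm{ord}_{\mathrm{Br}(k)}([C])$ share the same set of prime divisors; this translates divisibility of the index by a prime $q$ into non-vanishing of the $q$-primary component of $[B] - [A]$ in $\mathrm{Br}(k)$.

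For part (i), in characteristic zero every positive integer is invertible in $\kappa(\mathfrak{p})$, so all hom groups between generating objects equal $\kappa(\mathfrak{p})$. This forces every $U(A)_{\kappa(\mathfrak{p})}$ to be isomorphic to the $\otimes$-unit $U(k)_{\kappa(\mathfrak{p})}$, with endomorphism ring $\kappa(\mathfrak{p})$. The additive Karoubian envelope of a single generator with a field endomorphism ring is then $\mathrm{Vect}(\kappa(\mathfrak{p}))$, and the monoidal structure is automatic since the generator is the $\otimes$-unit.

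For part (ii), I will use the $p$-primary splitting $\mathrm{Br}(k) = \mathrm{Br}(k)\{p\} \oplus \mathrm{Br}(k)\{p'\}$. By the index-period fact, $U(A)_{\kappa(\mathfrak{p})} \simeq U(B)_{\kappa(\mathfrak{p})}$ precisely when $[A]$ and $[B]$ have the same image in $\mathrm{Br}(k)\{p\}$, while between representatives with distinct $p$-primary classes the hom spaces vanish and the endomorphism rings are $\kappa(\mathfrak{p})$. Choosing, for each $\alpha \in \mathrm{Br}(k)\{p\}$, a central simple $k$-algebra $A_\alpha$ with $[A_\alpha] = \alpha$, the identities $U(A_\alpha) \otimes U(A_\beta) = U(A_\alpha \otimes_k A_\beta)$ and $[A_\alpha \otimes_k A_\beta] = \alpha + \beta$ in $\mathrm{Br}(k)\{p\}$ align the tensor law on isomorphism classes with the group law of $\mathrm{Br}(k)\{p\}$. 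Passing to the additive Karoubian envelope of this ``orthogonal'' $\mathrm{Br}(k)\{p\}$-indexed family then delivers the claimed $\otimes$-equivalence with $\mathrm{Gr}_{\mathrm{Br}(k)\{p\}}\mathrm{Vect}(\kappa(\mathfrak{p}))$. The main subtlety I anticipate is setting up this last $\otimes$-equivalence functorially, making sure composition, associator, and symmetry constraints transport coherently under the grading identification; given the $\kappa(\mathfrak{p})$-linearity and the fact that all nontrivial information is concentrated on the ``diagonal'' $\Hom$-groups, this step is essentially bookkeeping.
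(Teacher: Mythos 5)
Your proof is correct and follows essentially the same route as the paper: both start from the explicit hom formula $\Hom_{\mathrm{CSA}(k;\kappa(\mathfrak{p}))}(U(A)_{\kappa(\mathfrak{p})}, U(B)_{\kappa(\mathfrak{p})})=\mathrm{ind}(A^\op \otimes_k B)\cdot\kappa(\mathfrak{p})$ and convert divisibility of the index by $p$ into a Brauer-theoretic statement via the index--period prime-divisor fact. The only organizational difference is that the paper argues in two steps (first showing prime-to-$p$ classes collapse to the unit, then handling $p$-primary classes via $\mathrm{ind}(A^\op\otimes_k B)\mid\mathrm{ind}(A^\op)\cdot\mathrm{ind}(B)$ so that the relevant index is a $p$-power), whereas you apply the index--period fact once to $A^\op\otimes_k B$ to obtain the criterion $U(A)_{\kappa(\mathfrak{p})}\simeq U(B)_{\kappa(\mathfrak{p})}$ if and only if $[A]$ and $[B]$ have the same image in $\mathrm{Br}(k)\{p\}$ in a single stroke --- the same input, packaged slightly more economically.
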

\begin{proof}
Given any two central simple $k$-algebras $A$ and $B$, we have 
\begin{equation}\label{eq:ident-proof}
\Hom_{\mathrm{CSA}(k;\kappa(\mathfrak{p}))}(U(A)_{\kappa(\mathfrak{p})}, U(B)_{\kappa(\mathfrak{p})})=\mathrm{ind}(A^\op \otimes_k B) \cdot \kappa(\mathfrak{p})\,.
\end{equation}

(i) When $\mathrm{char}(\kappa(\mathfrak{p}))=0$, the right-hand side of \eqref{eq:ident-proof} equals $\kappa(\mathfrak{p})$. This implies that $U(A)_{\kappa(\mathfrak{p})}\simeq U(k)_{\kappa(\mathfrak{p})}$ for every central simple $k$-algebra $A$, and consequently that the category $\mathrm{CSA}(k;\kappa(\mathfrak{p}))$ is $\otimes$-equivalent to $\mathrm{Vect}(\kappa(\mathfrak{p}))$.

(ii) When $\mathrm{char}(\kappa(\mathfrak{p}))=p>0$, the right-hand side of \eqref{eq:ident-proof} equals
\begin{equation}\label{eq:computation-CSA}
\mathrm{ind}(A^\op \otimes_k B) \cdot \kappa(\mathfrak{p})= \begin{cases} \kappa(\mathfrak{p}) & \mathrm{if}\quad p \nmid \mathrm{ind}(A^\op \otimes_k B) \\
0 & \mathrm{if}\quad p \mid\mathrm{ind}(A^\op \otimes_k B)\,.
\end{cases}
\end{equation}
This implies that $U(A)_{\kappa(\mathfrak{p})}\simeq U(k)_{\kappa(\mathfrak{p})}$ for every central simple $k$-algebra $A$ such that $[A] \in \bigoplus_{q \neq p} \mathrm{Br}(k)\{q\}$. Now, let $A$ and $B$ be central simple $k$-algebras such that $[A],[B] \in \mathrm{Br}(k)\{p\}$. Since $\mathrm{ind}(A^\op \otimes_k B)\mid \mathrm{ind}(A^\op) \cdot \mathrm{ind}(B)$, the preceding computation \eqref{eq:computation-CSA} implies also that
$$ U(A)_{\kappa(\mathfrak{p})}\simeq U(B)_{\kappa(\mathfrak{p})}\Leftrightarrow \mathrm{ind}(A^\op \otimes_k B)=1 \Leftrightarrow [A]=[B] \in \mathrm{Br}(k)\{p\}\,.$$
This allows us to conclude that $\mathrm{CSA}(k;\kappa(\mathfrak{p}))\simeq \mathrm{Gr}_{\mathrm{Br}(k)\{p\}}\mathrm{Vect}(\kappa(\mathfrak{p}))$.
\end{proof}
\subsection*{Item (ii)} Thanks to equivalence \eqref{eq:equivalence}, we have an injective group homomorphism
\begin{eqnarray}\label{eq:injective}
\mathrm{Br}(k) \too \mathrm{Pic}(\mathrm{CSA}(k;\bbZ)) && [A] \mapsto U(A)_\bbZ\,.
\end{eqnarray}
Using \cite[Thm.~2.19(iv)]{separable}, we observe that the objects $U(A_1)_\bbZ \oplus \cdots \oplus U(A_m)_\bbZ$ of $\mathrm{CSA}(k;\bbZ)$ with $m>1$ are {\em not} $\otimes$-invertible. Since the category $\mathrm{CSA}(k;\bbZ)$ is Karoubian (see \cite[Thm.~2.19]{separable}), we then conclude that \eqref{eq:injective} is moreover surjective.
\begin{remark}[Coefficients II]
Recall from \cite[\S9]{book}\cite{Weight} the construction of the symmetric monoidal triangulated category $\mathrm{KMM}(k)$; and hence of the full subcategories $\mathrm{NMCSA}(k)$ and $\mathrm{CSA}(k)$. By construction, we have an exact symmetric monoidal functor $(-)_\bbZ\colon \mathrm{KMM}(k) \to \mathrm{KMM}(k;\bbZ)$ which restricts to a $\otimes$-equivalence $\mathrm{CSA}(k)\simeq \mathrm{CSA}(k;\bbZ)$. A proof similar to the one of Theorem \ref{thm:computation-7}, with $\mathrm{NMCSA}(k;\bbZ)$ replaced by $\mathrm{NMCSA}(k)$, allows us then to conclude that $\mathrm{Pic}(\mathrm{NMCSA}(k))\simeq \mathrm{Br}(k) \times \bbZ$. In conclusion, although the categories $\mathrm{NMCSA}(k)$ and $\mathrm{NMCSA}(k;\bbZ)$ are not equivalent, they have nevertheless the same Picard group!
\end{remark}
\section{Proof of Theorem \ref{thm:computation-9}}\label{sec:prooflast1}
Similarly to the proof of Theorem \ref{thm:computation-7}, with central simple algebras replaced by separable algebras, we observe that the category $\mathrm{NMS}\cA(k;R)$ carries a bounded weight structure $w_R$ with heart $\mathrm{S}\cA(k;R)$. 

Let us now present explicit models of the categories $\mathrm{AM}(k;R)$ and $\mathrm{Sep}(k;R)$, and consequently of the categories $\cA(k;R)$ and $\mathrm{S}\cA(k;R)$. Recall from \S\ref{sec:applications} that $\Gamma:=\mathrm{Gal}(\overline{k}/k)$. The {\em convolution category $\mathrm{Cov}(\Gamma;R)$} is defined as follows: the objects are the finite $\Gamma$-sets $S$; the morphisms $\Hom_{\mathrm{Cov}(\Gamma;R)}(S_1,S_2)$ are the $\Gamma$-invariant functions $\alpha\colon S_1 \times S_2 \to R$; the composition law is the convolution product
$$ \Hom_{\mathrm{Cov}(\Gamma;R)}(S_1,S_2) \times \Hom_{\mathrm{Cov}(\Gamma;R)}(S_2,S_3)\too \Hom_{\mathrm{Cov}(\Gamma;R)}(S_1,S_3)$$
given by 
$$ (\alpha,\beta) \mapsto (\alpha \ast \beta)(s_1, s_3):= \sum_{s_2 \in S_2}\alpha(s_1,s_2) \cdot \beta(s_2, s_3)\,;$$
and the identities are the $\Gamma$-invariant functions $S\times S \to R$ which are equal to $1$ on the diagonal and $0$ otherwise. The disjoint union and the cartesian product of finite $\Gamma$-sets makes $\mathrm{Cov}(\Gamma;R)$ into an additive symmetric monoidal category. As proved in \cite[Prop.~2.3]{separable}, the assignment $S \mapsto U(k_S)_R$, where $k_S$ stands for the commutative separable $k$-algebra $\Hom_\Gamma(S, \overline{k})$, gives rise to a $\otimes$-equivalence of categories between $\mathrm{Kar}(\mathrm{Cov}(\Gamma; R))$ and $\mathrm{AM}(k;R)$.

Given a finite $\Gamma$-set $S$, an element $s \in S$, and an Azumaya $k_S$-algebra $A$, let us write $k_s$ for the finite separable field extension $\Hom_\Gamma(\Gamma s, \overline{k})$, $A_s$ for the central simple $k_s$-algebra $A\otimes_{k_S} k_s$, and $\mathrm{ind}_s(A)$ for the index of $A_s$. The category $\mathrm{Cov}'(\Gamma;R)$ is defined as follows: the objects are the pairs $(S,A)$ where $S$ is a finite $\Gamma$-set and $A$ is an Azumaya $k_S$-algebra; the morphisms $\Hom_{\mathrm{Cov}'(\Gamma;R)}((S_1,A), (S_2, B))$ are the $\Gamma$-invariant functions $\alpha\colon S_1 \times S_2 \to R$ such that $\alpha((s_1,s_2))\in \mathrm{ind}_{(s_1,s_2)}(A^\op \otimes_k B) \cdot R$ for every $(s_1, s_2) \in S_1 \times S_2$; and the composition law and the identities are those of $\mathrm{Cov}(\Gamma;R)$. The direct sum $(S_1,A) \oplus(S_2,B) :=(S_1 \amalg S_2, A\times B)$ and tensor product $(S_1,A) \otimes (S_2,B) := (S_1 \times S_2, A\otimes B)$ make $\mathrm{Cov}'(\Gamma; R)$ into an additive symmetric monoidal category. As proved in \cite[Thm.~2.12]{separable}, the assignment $(S,A)\mapsto U(A)_R$ gives rise to a $\otimes$-equivalence of categories between $\mathrm{Kar}(\mathrm{Cov}'(\Gamma;R))$ and $\mathrm{Sep}(k;R)$.


Let us now show that the category $\mathrm{NMS}\cA(k;R)$ has the $w_R$-Picard property. By construction, $\mathrm{S}\cA(k;R)$ is essentially small. The above model of $\mathrm{Sep}(k;R)$, and of $\mathrm{S}\cA(k;R)$, implies that the remaining assumptions (A1) of Theorem \ref{thm:main2} are also verified. In what concerns assumptions (A2), take for $\cT_{\kappa(\mathfrak{p})}$ the category $\mathrm{NMS}\cA(k;\kappa(\mathfrak{p}))$ and for $\iota_{\kappa(\mathfrak{p})}$ the functor $-\otimes_R \kappa(\mathfrak{p})\colon \mathrm{NMS}\cA(k;R) \to \mathrm{NMS}\cA(k;\kappa(\mathfrak{p}))$. By construction, the latter functor is weight-exact (see Remark \ref{rk:weight-exact}), symmetric monoidal, and induces a $\otimes$-equivalence of categories between $\mathrm{Kar}(\mathrm{S}\cA(k;R) \otimes_R \kappa(\mathfrak{p}))$ and $\mathrm{S}\cA(k;\kappa(\mathfrak{p}))$. This shows that the assumptions (A2) are also verified.

Let us now prove that the categories $\mathrm{NMS}\cA(k;\kappa(\mathfrak{p}))$ have the $w_{\kappa(\mathfrak{p})}$-Picard property; thanks to Theorem \ref{thm:main2} this implies that $\mathrm{NMS}\cA(k;R)$ has the $w_R$-Picard property. In order to do so, we will make use of Theorem \ref{thm:main1}. Concretely, we need to prove that the categories $\mathrm{S}\cA(k,\kappa(\mathfrak{p}))$ are semi-simple and local. As explained in \cite[Cor.~2.13]{separable}, the additive symmetric monoidal functor $Z(-)\colon \mathrm{Sep}(k;R) \to \mathrm{AM}(k;R)$ corresponds, under the above models, to the forgetful functor
\begin{eqnarray*}
\mathrm{Cov}'(\Gamma;R) \too \mathrm{Cov}(\Gamma;R) && (S,A) \mapsto S\,.
\end{eqnarray*}
This implies, in particular, that the functor $Z(-)$ is faithful and conservative. Since the category $\cA(k,\kappa(\mathfrak{p}))$ is semi-simple and local (see \S\ref{sub:proof22}), we then conclude that the category $\mathrm{S}\cA(k,\kappa(\mathfrak{p}))$ is also semi-simple and local.
\begin{remark}\label{rk:last}
Note that whenever $R$ is an $\bbQ$-algebra, we have the equality
$$ \Hom_{\mathrm{Cov}'(\Gamma;R)}((S_1,A),(S_2,B))= \Hom_{\mathrm{Cov}(\Gamma; R)}(S_1,S_2)\,.$$
This implies that the functor $Z(-)$ induces a $\otimes$-equivalence of categories between $\mathrm{Sep}(k;R)$, resp. $\mathrm{S}\cA(k;R)$, and $\mathrm{AM}(k;R)$, resp. $\cA(k;R)$.
\end{remark}
\section{Proof of Theorem \ref{thm:computation-10}}
Let us denote by $\mathrm{P}(E)$ the smallest additive, Karoubian, full subcategory of $\cD_c(E)$ containing the $E$-module $E$. Since by assumption the ring spectrum $E$ is connective, we have trivial positive Ext-groups:
\begin{eqnarray*}
\Hom_{\cD_c(E)}(E,E[n]) \simeq \pi_{-n}(E) =0 && n >0\,.
\end{eqnarray*}
This implies that the subcategory $\mathrm{P}(E)\subset \cD_c(E)$ is negative. Making use of \cite[Thm.~4.3.2 II and Prop.~5.2.2]{Bondarko-Weight}, we conclude that the category $\cD_c(E)$ carries a bounded weight structure $w$ with heart $\mathrm{P}(E)$.

Let us now show that the category $\cD_c(E)$ has the $w$-Picard property. By construction, $\mathrm{P}(E)$ identifies with the category of finitely generated projective $\pi_0(R)$-modules. Therefore, by taking $R:=\pi_0(E)$, all the assumptions (A1) of Theorem \ref{thm:main2} are verified. In what concerns assumptions (A2), take for $\cT_{k(\mathfrak{p})}$ the category $\cD^b(k(\mathfrak{p}))$, equipped with the canonical bounded weight structure with heart $\mathrm{Vect}(k(\mathfrak{p}))$, and for $\iota_{k(\mathfrak{p})}$ the (composed) base-change functor
$$\cD_c(E) \stackrel{-\wedge_E H\pi_0(E)}{\too} \cD_c(H\pi_0(E))\simeq \cD_c(R) \stackrel{-\otimes_R k(\mathfrak{p})}{\too} \cD^b(k(\mathfrak{p}))\,.$$
By construction, the latter functor is weight-exact (see Remark \ref{rk:weight-exact}), symmetric monoidal, and induces a $\otimes$-equivalence of categories between $\mathrm{Kar}(\mathrm{P}(E) \otimes_R \kappa(\mathfrak{p}))$ and $\mathrm{Vect}(k(\mathfrak{p}))$. Since the categories $\cD^b(k(\mathfrak{p}))$ clearly have the $w_{k(\mathfrak{p})}$-property, we conclude from Theorem \ref{thm:main2} that $\cD_c(E)$ has the $w$-Picard property.

Finally, since the category $\cD_c(E)$ has the $w$-Picard property, we have an isomorphism $\mathrm{Pic}(\cD_c(E))\simeq \mathrm{Pic}(\mathrm{P}(E))\times \bbZ$. The proof follows now from the fact that $\mathrm{Pic}(\mathrm{P}(E))$ is isomorphic to $\mathrm{Pic}(\pi_0(E))$.

\medbreak\noindent\textbf{Acknowledgments:} M.~Bondarko is grateful to Vladimir Sosnilo, Qiaochu Yuan, and  to the users of the Mathoverflow forum for their really helpful comments. G.~Tabuada is grateful to Joseph Ayoub, Andrew Blumberg, and Niranjan Ramachandran for useful conversations. The authors would also like to thank Tom Bachmann for commenting a previous version of this article and for kindly informing us that some related results, concerning the categories $\mathrm{DM}(k;R)$ and $\mathrm{SH}(k)$ and whose proofs use others methods, will appear in his Ph.D. thesis.


\begin{thebibliography}{00}

\bibitem{AK} Y.~Andr\'e and B. Kahn, {\em Nilpotency, radicals and monoidal structures}. With an appendix by Peter O'Sullivan. Rend. Sem. Mat. Univ. Padova {\bf 108} (2002), 107--291.

\bibitem{Ayoub} J.~Ayoub, {\em Les six op\'erations de Grothendieck et le formalisme des cycles \'evanescents dans le monde motivique}. Ast\'erisque {\bf 314}--{\bf 315} (2007). Soci\'et\'e Math\'ematique de France.

\bibitem{Ayoub2} \bysame, {\em L'alg\`ebre de Hopf et le groupe de Galois motiviques d'un corps de caract\'eristique nulle, I}. Journal f\"ur die reine und angewandte Mathematik {\bf 693} (2014), 1--149.

\bibitem{bach} T.~Bachmann, {\em On the invertibility of motives of affine quadrics}. Available at arXiv:1506.07377(v3).

\bibitem{bashli} P.~Balmer and M.~Schlichting, {\em Idempotent completion
of triangulated categories}. Journal of Algebra {\bf 236}, no. 2 (2001),
819--834.


\bibitem{Bondarko-Weight} M.~Bondarko, {\em Weight structures vs. $t$-structures; weight filtrations, spectral sequences, and complexes (for motives and in general)}. J. $K$-Theory {\bf 6} (2010), no.~3, 387--504. Consult also \url{http://arxiv.org/abs/0704.4003}.

\bibitem{Bondarko-Killing} \bysame, {\em On morphisms killing weights, weight complexes, and Eilenberg-Maclane (co)homology of spectra}. Available at arXiv:1509.08453(v2).

\bibitem{Bondarko-Gersten} \bysame, {\em Gersten weight structures for motivic homotopy categories; direct summands of cohomology of function fields, and coniveau spectral sequences}. Available at arXiv:1312.7493(v4).
 
\bibitem{bscwh} M.~Bondarko and V.~Sosnilo, {\em Detecting the $c$-effectivity of motives, their weights, and dimension via Chow-weight (co)homology: a ``mixed motivic decomposition of the diagonal"}. Available at arXiv:1411.6354(v2).

\bibitem{Borel} A. Borel, {\em Stable real cohomology of arithmetic groups}. Ann. Scient.
{\'E}c. Norm. Sup. $4^e$ s{\'e}rie {\bf 7} (1974), p. 235--272.

\bibitem{Fausk} H.~Fausk, {\em Picard groups of derived categories}, Journal of Pure and Applied Algebra 180 (2003), 251--261.

\bibitem{Princeton} E.~Friedlander, V. Suslin and V.~Voevodsky, {\em Cycles, transfers, and motivic homology theories}. 
Annals of Mathematics Studies, {\bf 143}. Princeton University Press, Princeton, NJ, 2000.

\bibitem{HSS} M.~Hovey, B.~Shipley and J.~Smith, {\em Symmetric spectra}. J. AMS {\bf 13} (2000), no.~1, 149--208.

\bibitem{HMS} M.~Hopkins, M.~Mahowald and H.~Sadofsky, {\em  Constructions of elements in Picard groups}. Topology and representation theory (Evanston, IL, 1992), 89--126, 
Contemp. Math., {\bf 158}, Amer. Math. Soc., Providence, RI, 1994.

\bibitem{PoHu} P.~Hu, {\em On the Picard group of the stable $\bbA^1$-homotopy category}. Topology {\bf 44} (2005), 609--640.

\bibitem{Witt} M.~Knebusch and M.~Kolster, {\em Witt rings}. Braunschweig, Weisbaden: Vieweg (1982).

\bibitem{IAS} M.~Kontsevich, {\em Noncommutative motives}. Talk at the IAS on the occasion of the $61^{\mathrm{st}}$ birthday of Pierre Deligne (2005). Available at \url{http://video.ias.edu/Geometry-and-Arithmetic}.    
      
\bibitem{Miami} \bysame, {\em Mixed noncommutative motives}. Talk at the Workshop on Homological Mirror Symmetry,  Miami, 2010. Notes available at \url{www-math.mit.edu/auroux/frg/miami10-notes}.  

\bibitem{finMot} \bysame, {\em Notes on motives in finite characteristic}.  Algebra, arithmetic, and geometry: in honor of Yuri I. Manin. Vol. II,  213--247, Progr. Math., {\bf 270}, Birkhauser Boston, MA, 2009.  


\bibitem{Levine} M.~Levine, {\em A comparison of motivic and classical stable homotopy theories}. J. Topology (2014) {\bf 7} (2): 327--362.


\bibitem{MVW}  C.~Mazza, V.~Voevodsky and C.~Weibel, {\em Lecture notes on motivic cohomology}. Clay Mathematics Monographs, {\bf 2}. American Mathematical Society, Providence, RI.

\bibitem{Morel3} F.~Morel, {\em On the motivic $\pi_0$ of the sphere spectrum}. In Axiomatic, enriched and motivic homotopy theory, volume {\bf 131} of NATO Sci. Ser. II, pages 219--260. Kluwer, Dordrecht, 2004.

\bibitem{Morel1} \bysame, {\em An introduction to $\bbA^1$-homotopy theory}. Contemporary Developments in Algebraic $K$-Theory. ICTP Lect. Notes (Trieste), vol. XV, 2004, pp. 357--441.

\bibitem{Morel2} F.~Morel and V.~Voevodsky, {\em $\bbA^1$-homotopy theory of schemes}. Publ. Math. Inst. Hautes \'Etudes Sci. {\bf 90} (1999) 45--143.

\bibitem{Pauksztello} D.~Pauksztello, {\em Compact corigid objects in triangulated categories and co-$t$-structures}. Cent. Eur. J. Math. {\bf 6} (2008), no. 1, 25--42.

\bibitem{Peter} T.~Peter, {\em Prime ideals of mixed Artin-Tate motives}. J. $K$-Theory {\bf 11} (2013), 331--349.

\bibitem{Spectra1} S.~Schwede, {\em Symmetric spectra}. Book project in progress. Available at \url{http://www.math.uni-bonn.de/people/schwede/}

\bibitem{book} G.~Tabuada, {\em Noncommutative Motives}. With a preface by Yuri I. Manin. University Lecture Series, {\bf 63}. American Mathematical Society, Providence, RI, 2015.

\bibitem{Hopf} \bysame, {\em Noncommutative mixed (Artin) motives and their motivic Hopf dg algebras}. Available at arXiv:1412.2294. To appear in Selecta Mathematica.

\bibitem{Homogeneous} \bysame, {\em Additive invariants of toric and twisted projective homogeneous varieties via noncommutative motives}. Journal of Algebra, {\bf 417} (2014), 15--38.

\bibitem{Weight} \bysame, {\em Weight structure on Kontsevich's noncommutative mixed motives}. Homology, Homotopy and Applications, {\bf 14}(2) (2012), 129--142.

\bibitem{separable} G.~Tabuada and M. Van den Bergh, {\em Noncommutative motives of separable algebras}. Available at arXiv:1411.7970(v2).

\bibitem{Voevodsky} V.~Voevodsky, {\em Triangulated categories of motives over a field}. Cycles, transfers, and motivic homology theories, 188--238, Ann. of Math. Stud., {\bf 143}, Princeton Univ. Press, NJ, 2000. 

\bibitem{ICM} \bysame, {\em $\bbA^1$-homotopy theory}. Proceedings of the International Congress of Mathematicians, vol. I, Berlin, 1998, Doc. Math. Extra Vol. I (1998) 579--604.

\bibitem{Jorg} J.~Wildeshaus, {\em Notes on Artin-Tate motives}. Available at arXiv:0811.4551.

\end{thebibliography}
\end{document}

\end{proof}